\long\def\symbolfootnote[#1]#2{\begingroup%
\def\thefootnote{\fnsymbol{footnote}}\footnote[#1]{#2}\endgroup}
\qed\vspace{5pt}}
\newtheoremstyle{lause}
{5pt}
{5pt}
{\slshape}
{\parindent}
{\bfseries}
{.}
{.5em}
{}
\theoremstyle{lause}
\newtheoremstyle{maaritelma}
{5pt}
{5pt}
{\rmfamily}
{\parindent}
{\bfseries}
{.}
{.5em}
{}
\theoremstyle{maaritelma}
\newtheoremstyle{lause}
{5pt}
{5pt}
{\slshape}
{\parindent}
{\bfseries}
{.}
{.5em}
{}
\theoremstyle{lause}
\newtheorem{theorem}{Theorem}[section]
\newtheorem{lemma}[theorem]{Lemma}
\newtheorem{proposition}[theorem]{Proposition}
\newtheorem{corollary}[theorem]{Corollary}
\newtheorem{problem}[theorem]{Problem}
\newtheoremstyle{maaritelma}
{5pt}
{5pt}
{\rmfamily}
{\parindent}
{\bfseries}
{.}
{.5em}
{}
\theoremstyle{maaritelma}
\newtheorem{definition}[theorem]{Definition}
\newtheorem{example}[theorem]{Example}
\newtheorem{remark}[theorem]{Remark}
\numberwithin{equation}{section}
\begin{document}

\thispagestyle{empty}

\begin{center}

{\large{\textbf{Minimum energy problems with external fields\\ on locally compact spaces
}}}

\vspace{18pt}

\textbf{Natalia Zorii}

\vspace{18pt}

\emph{In memory of Makoto Ohtsuka (1922--2007)}\vspace{8pt}

\footnotesize{\address{Institute of Mathematics, Academy of Sciences
of Ukraine, Tereshchenkivska~3, 01601,
Kyiv-4, Ukraine\\
natalia.zorii@gmail.com }}

\end{center}

\vspace{12pt}

{\footnotesize{\textbf{Abstract.} The paper deals with minimum energy problems in the presence of external fields on a locally compact space $X$ with respect to a function kernel $\kappa$ satisfying the energy and consistency principles.
For quite a general (not necessarily lower semicontinuous) external field $f$, we establish sufficient and/or necessary conditions for the existence of $\lambda_{A,f}$ minimizing the Gauss functional
\[\int\kappa(x,y)\,d(\mu\otimes\mu)(x,y)+2\int f\,d\mu\]
over all positive Radon measures $\mu$ with $\mu(X)=1$, concentrated on quite a general (not necessarily closed or bounded) $A\subset X$, thereby giving an answer to a question raised by M.~Ohtsuka (J.~Sci.\ Hiroshima Univ., 1961). Such results are specified for the Riesz kernels $|x-y|^{\alpha-n}$, $0<\alpha<n$, on $\mathbb R^n$, $n\geqslant2$, and are illustrated by some examples. Furthermore, we provide various alternative characterizations of the minimizer $\lambda_{A,f}$, and as a by-pro\-duct we analyze the strong and vague continuity of $\lambda_{A,f}$ under the exhaustion of $A$ by compact $K\subset A$.
The results obtained hold true and are new for many interesting kernels in classical and modern potential theory.
}}
\symbolfootnote[0]{\quad 2010 Mathematics Subject Classification: Primary 31C15.}
\symbolfootnote[0]{\quad Key words: Radon measures on locally compact spaces, minimum energy problems, external fields, perfect kernels, maximum principles, inner capacitary measures, inner balayage.
}

\vspace{6pt}

\markboth{\emph{Natalia Zorii}} {\emph{Minimum energy problems with external fields}}

\section{Statement of the problem. Main results}\label{sec-intr}

C.F.~Gauss investigated the variational
problem of minimizing the Newtonian energy evaluated in the presence
of an external field, nowadays called the Gauss functional (or, in constructive function theory, the weighted energy), over
positive charges $\varphi\,ds$ on the boundary surface of a bounded
domain in $\mathbb R^3$ (see \cite{Gau}).

A far-rea\-ch\-ing generalization of the original Gauss
variational problem, employing vec\-tor-val\-ued Radon measures $\boldsymbol{\mu}=(\mu^i)_{i\in I}$ on a locally compact space $X$ as charges and replacing the Newtonian
kernel by a function kernel $\kappa$ on $X$, has grown into an eminent branch of modern potential theory, initiated in the fundamental work by M.~Ohtsuka \cite{O}. See e.g.\ the author's papers \cite{Z5a}--\cite{ZPot3}, where vec\-tor-val\-ued Radon measures $\boldsymbol{\mu}$ were even allowed to be infinite dimensional. Regarding the analytic, constructive, and numerical analysis of the Gauss variational problem for scalar Borel measures on $\mathbb R^n$, $n\geqslant2$, with respect to the logarithmic or Riesz kernels, see the monographs \cite{BHS,ST} and numerous references therein, as well as \cite{Dr0}, \cite{HWZ1}--\cite{HWZ3}.

Throughout the present paper, $X$ denotes a locally compact (Hausdorff) space, $\mathfrak M$ the linear space of all (re\-al-val\-ued scalar Radon) measures $\mu$ on $X$ equipped with the {\it vague\/} ({}=\,{\it weak\/$^*$}) topology of pointwise convergence on the class $C_0(X)$ of all continuous functions $\varphi:X\to\mathbb R$ of compact support, and $\mathfrak M^+$ the cone of all positive $\mu\in\mathfrak M$, where $\mu\in\mathfrak M$ is {\it positive\/} if and only if $\mu(\varphi)\geqslant0$ for all positive $\varphi\in C_0(X)$.

A {\it kernel\/} $\kappa$ on $X$ is meant to be a symmetric function from $\Phi(X\times X)$, where $\Phi(Y)$ consists of all lower semicontinuous (l.s.c.) functions $g:Y\to(-\infty,\infty]$ such that $g\geqslant0$ unless the topological space $Y$ is compact. Then {\it the energy}
\begin{equation*}
 I(\mu):=\int\kappa(x,y)\,d(\mu\otimes\mu)(x,y),\quad\mu\in\mathfrak M,
\end{equation*}
with respect to the kernel $\kappa$ is well defined (as a finite number or $+\infty$) on all of $\mathfrak M^+$, and represents there a vaguely l.s.c.\ function (see Section~\ref{sec-1} below for more details). We denote by $\mathcal E^+$ the set of all $\mu\in\mathfrak M^+$ with $I(\mu)<\infty$.

For any $A\subset X$, denote by $\mathcal E^+(A)$ the class of all $\mu\in\mathcal E^+$ {\it concentrated on\/} $A$ \cite[Section~V.5.7]{B2}, and by $\breve{\mathcal E}^+(A)$ its subclass consisting of all $\mu$ with $\mu(X)=1$.

\subsection{Statement of the problem} Fix a universally measurable function $f:X\to[-\infty,\infty]$, to be treated as an external field acting on charges (measures) on $X$. Given $A\subset X$, let $\breve{\mathcal E}^+_f(A)$ stand for the class of all $\mu\in\breve{\mathcal E}^+(A)$ such that $f$ is $\mu$-in\-te\-g\-r\-ab\-le \cite{B2} (Chapter~IV, Sections~3, 4).
Then {\it the $f$-wei\-gh\-t\-ed energy\/} ({}=\,{\it the Gauss functional\/})
\begin{equation}\label{fen}
 I_f(\mu):=I(\mu)+2\int f\,d\mu
\end{equation}
is well defined and finite for all $\mu\in\breve{\mathcal E}^+_f(A)$, and one can introduce the extremal value\footnote{As usual, the infimum over the empty set is interpreted as $+\infty$. We also agree that $1/(+\infty)=0$ and $1/0 = +\infty$.}
\begin{equation}\label{wf}
w_f(A):=\inf_{\mu\in\breve{\mathcal E}^+_f(A)}\,I_f(\mu)\in[-\infty,\infty].
\end{equation}

In the remainder of the present section, we shall always assume that
\begin{equation}\label{fin}
-\infty<w_f(A)<\infty.
\end{equation}
(See Lemmas~\ref{l-aux-2}, \ref{l-aux-3} and Corollary~\ref{l-aux-4} for necessary and/or sufficient conditions for this to hold.) Then $\breve{\mathcal E}^+_f(A)\ne\varnothing$, and hence the following problem makes sense.

\begin{problem}\label{pr-main} Does there exist\/ $\lambda=\lambda_{A,f}\in\breve{\mathcal E}^+_f(A)$ with
\[I_f(\lambda_{A,f})=w_f(A)?\]
\end{problem}

Problem~\ref{pr-main} is often referred to as {\it the inner Gauss variational problem\/} \cite{O}.
We call its solutions $\lambda_{A,f}$ (if they exist) {\it the inner\/ $f$-weighted equilibrium measures\/} of $A$.

Assume for a moment that $A=K\subset X$ is compact, and that $f\in\Phi(X)$. Then the solutions $\lambda_{K,f}$ do exist,\footnote{In general, such $\lambda_{K,f}$ are {\it not\/} unique (unless of course the kernel $\kappa$ satisfies the energy principle).}
which follows easily from the vague compactness of the class of admissible measures,
and the vague lower semicontinuity of the $f$-we\-i\-g\-h\-t\-ed energy $I_f(\mu)$ on $\mathfrak M^+$.
However, this fails to hold if either $A$ is noncompact or $f\not\in\Phi(X)$, and then Problem~\ref{pr-main} becomes "rather difficult" (Ohtsuka \cite[Section~2.2]{O}).

In the remainder of this subsection as well as throughout Section~\ref{subs-main}, we impose on $\kappa$ the following permanent requirement:
\begin{itemize}
  \item[$(H_1)$] The kernel $\kappa$ is {\it perfect}, or equivalently it satisfies {\it the energy and consistency principles\/} (see \cite{F1}, cf.\ Section~\ref{subs-pr} below).
\end{itemize}
Then all (signed) measures $\mu\in\mathfrak M$ with $I(\mu)<\infty$ form a pre-Hil\-bert space $\mathcal E$ with the inner product $\langle\mu,\nu\rangle:=\int\kappa(x,y)\,d(\mu\otimes\nu)(x,y)$
and the energy norm $\|\mu\|:=\sqrt{I(\mu)}$. Moreover, the cone $\mathcal E^+$ then becomes complete in the strong topology, determined by this norm, and the strong topology on $\mathcal E^+$ is finer than the (induced) vague topology.

These facts made it possible to develop the theory of inner capacitary measures as well as that of inner balayage, the latter however additionally requiring the (perfect) kernel $\kappa$ to satisfy the domination principle (B.~Fuglede \cite{F1} and N.~Zorii \cite{Z-arx1}--\cite{Z-arx}, cf.\ Sections~\ref{sec-cap}, \ref{sec-bal} below). Along with the advantages of using perfect kernels, these two theories are crucial to the analysis of Problem~\ref{pr-main}, performed in the present work.

\subsection{Main results}\label{subs-main} Maintaining the interaction between the strong and the vague topologies on the (strongly complete) cone  $\mathcal E^+$ as the main tool in the present study, we obtain sufficient and/or necessary conditions for the solvability of Problem~\ref{pr-main} for {\it noncompact\/} (and {\it even nonclosed\/}) sets $A\subset X$ and for quite general ({\it not necessarily lower semicontinuous\/}) external fields $f$, thereby giving an answer to the ab\-o\-ve-quo\-t\-ed question by Ohtsuka \cite[Section~2.2]{O} (see Theorems~\ref{th2'}, \ref{th3'}, \ref{riesz} and Corollaries~\ref{qcomp}, \ref{th5'} below).
Furthermore, we establish various alternative characterizations of the solution $\lambda_{A,f}$ to Problem~\ref{pr-main} (see Theorems~\ref{th2'}, \ref{th3'}), and as a by-pro\-d\-uct we prove its strong and vague continuity
under the exhaustion of $A$ by compact $K\subset A$ (Theorem~\ref{th4'}),
thereby justifying the term "{\it inner\/} $f$-wei\-gh\-t\-ed equilibrium measure".

For this purpose, we impose on $f$ the following permanent requirement:
\begin{itemize}
  \item[$(H_2)$] The external field $f$ is representable in the form\footnote{In the recent works on Problem~\ref{pr-main}, mainly dealing with the logarithmic or Riesz kernels on $\mathbb R^n$ (see \cite{BHS,Dr0,ST} and references therein), the external field $f$ is always required to be of the class $\Phi(\mathbb R^n)$, whereas the presence of an alternative/additional source of energy, generated by a signed charge $\vartheta\in\mathcal E$, cf.\ (\ref{fform}) or (\ref{fform'}), agrees well with the original electrostatic nature of the problem.}
\begin{equation}\label{fform}
 f=\psi+U^\vartheta, \text{ \ where $\psi\in\Phi(X)$ and $\vartheta\in\mathcal E$,}
\end{equation}
$U^\vartheta(\cdot):=\int\kappa(\cdot,y)\,d\vartheta(y)$ being {\it the potential\/} with respect to the kernel $\kappa$.
\end{itemize}
(As shown in Lemma~\ref{l-aux-3} below, assumption (\ref{fin}) is then equivalent to $w_f(A)<\infty$, which in turn holds if and only if ${\rm cap}_*\bigl(\{x\in A:\ \psi(x)<\infty\}\bigr)>0$. Here and in the sequel, ${\rm cap}_*(\cdot)$ denotes {\it the inner capacity\/} of a set with respect to the kernel $\kappa$.)

Along with $(H_1)$ and $(H_2)$, some/all of the following three hypotheses on $\kappa$, $A$, and $f$ will often be required:
\begin{itemize}
\item[$(H_3)$] The class $\mathcal E^+(A)$ is closed in the strong topology on $\mathcal E^+$. (This in particular occurs if  $A\subset X$ is {\it quasiclosed\/} ({\it quasicompact\/}), that is, if $A$ can be approximated in {\it outer capacity\/} by closed (compact) sets \cite{F71}; see Section~\ref{def-quasi} below.)\smallskip
\item[$(H_4)$] The (perfect) kernel $\kappa$ satisfies {\it the first and the second maximum principles\/} (for definitions see Section~\ref{subs-pr}).\smallskip
  \item[$(H_2')$] The following particular case of (\ref{fform}) takes place:
\begin{equation}\label{fform'}
f=-U^\zeta,\text{ \ where $\zeta\in\mathcal E^+$ and $\zeta(X)\leqslant1$}.
\end{equation}
\end{itemize}

\begin{theorem}\label{th2'} Assume that\/ $(H_1)$--$(H_3)$ are fulfilled, and that
\begin{equation}\label{cafi'}
{\rm cap}_*(A)<\infty.
\end{equation}
Then the solution\/ $\lambda=\lambda_{A,f}$ to Problem\/~{\rm\ref{pr-main}} does exist.{\rm(\footnote{The solution to Problem~\ref{pr-main} is unique (if it exists), which follows easily from the convexity of the class of admissible measures and the energy principle (see \cite[Lemma~6]{Z5a}, cf.\ Section~\ref{sec-ext} below).})(\footnote{$\lambda_{A,f}$ would not necessarily exist if $(H_3)$ were omitted from the hypotheses (cf.\ footnote~\ref{in-f}, pertaining to the unweighted case $f=0$ and the Newtonian kernel $|x-y|^{2-n}$ on $\mathbb R^n$, $n\geqslant3$).})} Furthermore, $\lambda_{A,f}$ is uniquely determined within\/ $\breve{\mathcal E}^+_f(A)$ by either of the two characteristic properties\/\footnote{For more details about such characteristic inequalities for $\lambda_{A,f}$, see the author's earlier paper \cite{Z5a} (cf.\  Theorems~\ref{th-ch1} and \ref{th-ch2} below). As seen from there, the latter part of Theorem~\ref{th2'} actually holds true under much more general assumptions than stated.}
\begin{align*}
  U_f^{\lambda}&\geqslant c_{A,f}\text{ \ n.e.\ on\/ $A$},\\
  U_f^{\lambda}&\leqslant c_{A,f}\text{ \ $\lambda$-a.e.\ on\/ $X$},
\end{align*}
where\/ $U_f^{\lambda}:=U^\lambda+f$ denotes the\/ $f$-weighted potential of\/ $\lambda$, and
\begin{equation}\label{cc'}
c_{A,f}:=\int U_f^{\lambda}\,d\lambda=w_f(A)-\int f\,d\lambda\in(-\infty,\infty)
\end{equation}
is said to be the inner\/ $f$-weighted equilibrium constant for the set\/ $A$.
\end{theorem}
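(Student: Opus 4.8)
The plan is to treat Problem~\ref{pr-main} as a minimum-distance problem in the strongly complete cone $\mathcal E^+$, exploiting the interplay between the strong and vague topologies granted by $(H_1)$. First I would fix a minimizing sequence $(\mu_j)\subset\breve{\mathcal E}^+_f(A)$, so that $I_f(\mu_j)\to w_f(A)$, which is legitimate because (\ref{fin}) guarantees $\breve{\mathcal E}^+_f(A)\neq\varnothing$. The key observation is that, since the linear term $\mu\mapsto2\int f\,d\mu$ is affine, it cancels in the combination $\tfrac12 I_f(\mu_j)+\tfrac12 I_f(\mu_k)-I_f\bigl(\tfrac{\mu_j+\mu_k}{2}\bigr)$, so that the parallelogram identity in the pre-Hilbert space $\mathcal E$ gives
\[
\|\mu_j-\mu_k\|^2=2I_f(\mu_j)+2I_f(\mu_k)-4\,I_f\Bigl(\frac{\mu_j+\mu_k}{2}\Bigr).
\]
Because $\breve{\mathcal E}^+_f(A)$ is convex, the midpoint $\tfrac{\mu_j+\mu_k}{2}$ is again admissible, whence $I_f\bigl(\tfrac{\mu_j+\mu_k}{2}\bigr)\geqslant w_f(A)$ and the right-hand side does not exceed $2I_f(\mu_j)+2I_f(\mu_k)-4w_f(A)\to0$. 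Thus $(\mu_j)$ is strongly Cauchy; by the strong completeness of $\mathcal E^+$ it converges strongly to some $\lambda\in\mathcal E^+$, and $(H_3)$ forces $\lambda\in\mathcal E^+(A)$.

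The crucial point is to show that $\lambda$ still carries unit mass, and this is exactly where ${\rm cap}_*(A)<\infty$ enters. Finite inner capacity provides the uniform bound $\mu(X)\leqslant\sqrt{{\rm cap}_*(A)}\,\|\mu\|$ for every $\mu\in\mathcal E^+(A)$ (via Cauchy--Schwarz against the inner capacitary measure $\gamma_A$, whose potential is $\geqslant1$ nearly everywhere on $A$, hence $\mu$-a.e.); in particular $\|\mu_j\|^2=I(\mu_j)\geqslant1/{\rm cap}_*(A)>0$ rules out $\lambda=0$. More is true: under ${\rm cap}_*(A)<\infty$ the total-mass functional $\mu\mapsto\mu(X)$ is strongly continuous on $\mathcal E^+(A)$, so that $\mu_j(X)\to\lambda(X)$ and therefore $\lambda(X)=1$. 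Combined with the vague lower semicontinuity of $\mu\mapsto\int\psi\,d\mu$ (recall $\psi\in\Phi(X)$) and the strong continuity of $\mu\mapsto\langle\mu,\vartheta\rangle=\int U^\vartheta\,d\mu$, this yields the $f$-integrability of $\lambda$ and $I_f(\lambda)\leqslant\liminf_j I_f(\mu_j)=w_f(A)$; since $\lambda\in\breve{\mathcal E}^+_f(A)$ is admissible, the reverse inequality is automatic, so $\lambda=\lambda_{A,f}$ solves Problem~\ref{pr-main}. Uniqueness is immediate from the displayed identity, as two minimizers $\lambda_1,\lambda_2$ would give $\|\lambda_1-\lambda_2\|^2\leqslant4w_f(A)-4I_f\bigl(\tfrac{\lambda_1+\lambda_2}{2}\bigr)\leqslant0$.

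For the characterization I would pass to first variations. For any $\nu\in\breve{\mathcal E}^+_f(A)$ and $t\in[0,1]$ the measure $\lambda+t(\nu-\lambda)$ is admissible, and differentiating $t\mapsto I_f\bigl(\lambda+t(\nu-\lambda)\bigr)\geqslant I_f(\lambda)$ at $t=0^+$ yields the variational inequality $\int U_f^{\lambda}\,d\nu\geqslant\int U_f^{\lambda}\,d\lambda=c_{A,f}$ for all admissible $\nu$, the identity $c_{A,f}=\int U_f^\lambda\,d\lambda=w_f(A)-\int f\,d\lambda$ following by expanding $U_f^\lambda=U^\lambda+f$ and using $I_f(\lambda)=w_f(A)$. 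Feeding into this inequality measures concentrated on the subset of $A$ where $U_f^{\lambda}$ is small produces $U_f^{\lambda}\geqslant c_{A,f}$ n.e.\ on $A$, while testing against $\lambda$ restricted to the region where $U_f^{\lambda}$ exceeds $c_{A,f}$ produces $U_f^{\lambda}\leqslant c_{A,f}$ $\lambda$-a.e.; the two bounds together force $\int U_f^\lambda\,d\lambda=c_{A,f}$ and, conversely, uniquely determine $\lambda$ within $\breve{\mathcal E}^+_f(A)$, since for any competitor $\lambda'$ satisfying either of them one integrates the inequalities against $\lambda-\lambda'$ to obtain $\|\lambda-\lambda'\|^2\leqslant0$. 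These last manipulations, together with the measurability and a.e.\ bookkeeping needed to justify the n.e.\ statements, are carried out exactly as in \cite{Z5a} (cf.\ Theorems~\ref{th-ch1}, \ref{th-ch2}).

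The main obstacle I anticipate is the preservation of mass under strong convergence, that is, the strong closedness of the unit-mass slice $\breve{\mathcal E}^+(A)$ inside $\mathcal E^+(A)$: vague convergence only gives $\lambda(X)\leqslant1$, and the reverse inequality requires genuinely using ${\rm cap}_*(A)<\infty$ through the inner capacitary measure $\gamma_A$. Everything else is a combination of the Hilbert-space projection philosophy with the topological comparison built into $(H_1)$.
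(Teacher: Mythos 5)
Your overall architecture is the same as the paper's (Section~\ref{sec-pr}): a minimizing family in $\breve{\mathcal E}^+_f(A)$ is strongly Cauchy by the parallelogram identity (the affine term cancels exactly as you say), converges strongly and vaguely to some $\lambda\in\mathcal E^+$ by perfectness and strong completeness, lands in $\mathcal E^+(A)$ by $(H_3)$, and the weighted energy is lower semicontinuous along the combined strong/vague convergence (your split into the vaguely l.s.c.\ part $\psi$ and the strongly continuous part $\langle\vartheta,\cdot\rangle$ is precisely the paper's Lemma~\ref{la2}); the characterization part is Theorem~\ref{th-ch2}. The gap sits exactly at the step you yourself flag as "the main obstacle": you \emph{assert} that under ${\rm cap}_*(A)<\infty$ the total-mass functional $\mu\mapsto\mu(X)$ is strongly continuous on $\mathcal E^+(A)$, but you do not prove it, and the mechanism you hint at (Cauchy--Schwarz against $\gamma_A$) cannot deliver it. The inequality $U^{\gamma_A}\geqslant1$ n.e.\ on $A$ yields only the one-sided bound $\mu(X)\leqslant\langle\gamma_A,\mu\rangle\leqslant\sqrt{{\rm cap}_*(A)}\,\|\mu\|$ for \emph{positive} $\mu$ concentrated on $A$. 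Boundedness of a positively homogeneous, additive functional on a cone does not give its strong continuity: the difference $\mu_j-\lambda$ leaves the cone, and its Jordan decomposition is not controlled in energy norm, so the bound cannot be applied to it. The two-sided identity $\mu(X)=\langle\gamma_A,\mu\rangle$, which would make your argument work, requires $U^{\gamma_A}=1$ n.e.\ on $A$, i.e.\ (\ref{frr}); but that holds under Frostman's maximum principle $(H_4)$, a hypothesis Theorem~\ref{th2'} deliberately does not assume.

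What is actually needed --- and what the paper's proof supplies --- is a "no escape of mass to infinity" argument using the capacitary measures of the \emph{tails} $A\setminus K$, $K\in\mathfrak C_X$, rather than $\gamma_A$ itself. Since $\|\gamma_{A\setminus K}\|^2={\rm cap}_*(A\setminus K)$ decreases as $K$ increases, \cite[Lemma~4.1.1]{F1} shows that $(\gamma_{A\setminus K})_{K\in\mathfrak C_X}$ is strong Cauchy; it converges vaguely to $0$, hence strongly to $0$ by perfectness, so ${\rm cap}_*(A\setminus K)\downarrow0$ as $K\uparrow X$ (this is \emph{not} automatic from monotone decrease alone). Then, for the minimizing measures,
\begin{equation*}
\int 1_{A\setminus K}\,d\mu_s\leqslant\int U^{\gamma_{A\setminus K}}\,d\mu_s\leqslant\|\gamma_{A\setminus K}\|\,\|\mu_s\|\to0,
\end{equation*}
where the first inequality uses $U^{\gamma_{A\setminus K}}\geqslant1$ n.e.\ on $A\setminus K$, hence $\mu_s$-a.e.\ (Lemma~\ref{l-negl}); combining this with the upper-semicontinuity bound $\int1_K\,d\lambda\geqslant\limsup_s\int1_K\,d\mu_s$ for compact $K$ under vague convergence yields $\lambda(X)\geqslant1$, and the vague lower semicontinuity of mass gives $\lambda(X)\leqslant1$. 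Until you supply this (or an equivalent) argument, your proof establishes only that $\lambda\in\mathcal E^+(A)$ with $0<\lambda(X)\leqslant1$, which does not suffice to conclude that $\lambda$ is admissible, hence not that it solves Problem~\ref{pr-main}.
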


Here and in the sequel, the abbreviation {\it n.e.}\ ({\it nearly everywhere\/}) means, as usual, that the set of all $x\in A$ where the inequality fails is of inner capacity zero.

\begin{corollary}\label{qcomp}
 Under\/ {\rm(}permanent\/{\rm)} requirements\/ $(H_1)$ and\/ $(H_2)$, Problem\/~{\rm\ref{pr-main}} is\/ {\rm(}uniquely\/{\rm)} solvable for any quasicompact\/ $A\subset X$.
\end{corollary}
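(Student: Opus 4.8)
The plan is to obtain Corollary~\ref{qcomp} directly from Theorem~\ref{th2'} by verifying that, once $A$ is quasicompact, the two hypotheses of that theorem not already among the permanent requirements — namely $(H_3)$ and the finiteness condition (\ref{cafi'}) — are automatically in force. Since $(H_1)$ and $(H_2)$ are permanent and the standing assumption (\ref{fin}) holds throughout the section (so that $\breve{\mathcal E}^+_f(A)\ne\varnothing$ and the problem makes sense), checking these two points reduces the corollary to an application of Theorem~\ref{th2'}, whose conclusion is precisely the (unique) solvability asserted here.

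First I would observe that a quasicompact set is a fortiori quasiclosed, because in the Hausdorff space $X$ every compact set is closed, so an approximation in outer capacity by compact sets is in particular one by closed sets. Hence $(H_3)$ holds by its very formulation (see the parenthetical remark there and Section~\ref{def-quasi}), and $\mathcal E^+(A)$ is strongly closed with no further argument.

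Second — and this is the only step carrying any content, though it is routine — I would verify (\ref{cafi'}). By the definition of quasicompactness, fix a compact $K$ approximating $A$ in outer capacity, say ${\rm cap}^*(A\triangle K)<\infty$ (any finite tolerance suffices). From $A\subset K\cup(A\triangle K)$ and the (countable) subadditivity of the outer capacity one gets
\[
{\rm cap}^*(A)\leqslant{\rm cap}^*(K)+{\rm cap}^*(A\triangle K).
\]
The second summand is finite by the choice of $K$, and ${\rm cap}^*(K)<\infty$ because compact sets have finite capacity: if $W(K):=\inf_{\mu\in\breve{\mathcal E}^+(K)}I(\mu)$ were $0$, then a minimizing net, which is vaguely convergent by the vague compactness of probability measures carried by the compact $K$, would converge to a measure $\mu$ with $\mu(K)=1$ and, by vague lower semicontinuity of the energy, $I(\mu)=0$, contradicting the energy principle contained in $(H_1)$. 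Therefore ${\rm cap}_*(A)\leqslant{\rm cap}^*(A)<\infty$, which is (\ref{cafi'}).

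With $(H_3)$ and (\ref{cafi'}) established alongside the permanent $(H_1)$, $(H_2)$ and the standing assumption (\ref{fin}), all hypotheses of Theorem~\ref{th2'} are satisfied, so $\lambda_{A,f}$ exists and is unique. I anticipate no real obstacle: the entire substance resides in Theorem~\ref{th2'}, and the corollary merely translates quasicompactness into its two capacity-theoretic consequences. The only places demanding minor care are the subadditivity estimate and the finiteness of the capacity of a compact set, both of which are standard facts for perfect kernels.
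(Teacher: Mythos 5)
Your proposal is correct and takes essentially the same route as the paper: the paper's own proof likewise reduces the corollary to Theorem~\ref{th2'} by noting that $(H_3)$ holds via Theorem~\ref{l-quasi} and that (\ref{cafi'}) follows since compact sets have finite capacity for a strictly positive definite kernel. Your subadditivity estimate for the outer capacity and the vague-compactness argument for ${\rm cap}(K)<\infty$ merely spell out the details the paper leaves implicit.
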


\begin{proof}
For quasicompact $A$, (\ref{cafi'}) necessarily holds, the capacity of a compact set with respect to a strictly positive definite kernel being finite. Since $(H_3)$ is fulfilled as well (see Theorem~\ref{l-quasi} below), the corollary follows directly from Theorem~\ref{th2'}.
\end{proof}

Given $A\subset X$, denote by $\mathfrak C_A$ the upward directed set of all compact subsets $K$ of $A$, where $K_1\leqslant K_2$ if and only if $K_1\subset K_2$. If a net $(x_K)_{K\in\mathfrak C_A}\subset Y$ converges to $x_0\in Y$, $Y$ being a topological space, then we shall indicate this fact by writing
\begin{equation*}x_K\to x_0\text{ \ in $Y$ as $K\uparrow A$}.\end{equation*}

\begin{theorem}\label{th4'} Under\/ {\rm(}permanent\/{\rm)} requirements\/ $(H_1)$ and\/ $(H_2)$, assume that the solution\/ $\lambda_{A,f}$ to Problem\/~{\rm\ref{pr-main}} exists.\footnote{See Theorems~\ref{th2'}, \ref{th3'}, \ref{riesz} and Corollaries~\ref{qcomp}, \ref{th5'} for sufficient conditions for this to occur.} Then
\begin{equation}\label{conv2}\lambda_{K,f}\to\lambda_{A,f}\text{ \ strongly and vaguely in\/ $\mathcal E^+$ as\/ $K\uparrow A$},\end{equation}
$\lambda_{K,f}$ being the solution to Problem\/~{\rm\ref{pr-main}} with\/ $A:=K$ {\rm(}such solutions\/ $\lambda_{K,f}$ do exist for all\/ $K\in\mathfrak C_A$ large enough\/{\rm)}. If moreover\/ $f=U^\vartheta$ with\/ $\vartheta\in\mathcal E$, then also
\begin{equation}\label{conv3}\lim_{K\uparrow A}\,c_{K,f}=c_{A,f},\end{equation}
$c_{A,f}$ and\/ $c_{K,f}$ being introduced by formula\/ {\rm(\ref{cc'})} applied to\/ $A$ and\/ $K$, respectively.
\end{theorem}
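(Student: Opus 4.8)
The plan is to run the standard convexity machine available in the strongly complete cone $\mathcal E^+$: I would produce, for each large $K$, an explicit admissible measure $\mu_K\in\breve{\mathcal E}^+_f(K)$ that converges strongly to $\lambda:=\lambda_{A,f}$ and whose weighted energies tend to $w_f(A)$, and then let the parallelogram-type identity force the minimizers $\lambda_{K,f}$ to the same strong limit. First I would settle existence of $\lambda_{K,f}$ for $K$ large. Since $f$ is $\lambda$-integrable and $\int U^\vartheta\,d\lambda=\langle\lambda,\vartheta\rangle$ is finite, $\int\psi\,d\lambda$ is finite, so $\psi<\infty$ holds $\lambda$-a.e.; hence $\lambda$ (with $\lambda(X)=1$) is concentrated on $B:=A\cap\{\psi<\infty\}$. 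As a nonzero measure of finite energy cannot live on a set of inner capacity zero, ${\rm cap}_*(B)>0$, and by the very definition of inner capacity there is a compact $K_0\subset B\subset A$ with ${\rm cap}(K_0)>0$. For every compact $K$ with $K_0\subset K\subset A$ one then has ${\rm cap}_*(\{x\in K:\psi(x)<\infty\})>0$, whence $w_f(K)<\infty$, while $w_f(K)\geqslant w_f(A)>-\infty$ because $\breve{\mathcal E}^+_f(K)\subset\breve{\mathcal E}^+_f(A)$; Corollary~\ref{qcomp} (a compact set being quasicompact) now provides the unique $\lambda_{K,f}$.

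Next I would build the near-minimizers directly from $\lambda$. For $K\supset K_0$ set $\mu_K:=\lambda(K)^{-1}\,\lambda|_K\in\breve{\mathcal E}^+_f(K)$. Because $U^\lambda\in L^1(\lambda)$, dominated convergence along $K\uparrow A$ gives $I(\lambda|_{X\setminus K})\leqslant\int_{X\setminus K}U^\lambda\,d\lambda\to0$, so $\lambda|_K\to\lambda$ strongly and $\lambda(K)\to1$; hence $\mu_K\to\lambda$ strongly, and therefore $I(\mu_K)\to I(\lambda)$ and $\langle\mu_K,\vartheta\rangle\to\langle\lambda,\vartheta\rangle$. Since $\psi$ is $\lambda$-integrable and bounded below, inner regularity of the finite Radon measure $\psi\,d\lambda$ yields $\int\psi\,d\mu_K\to\int\psi\,d\lambda$. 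Combining the three, $I_f(\mu_K)\to I_f(\lambda)=w_f(A)$. As $w_f(K)\leqslant I_f(\mu_K)$ and $w_f(K)\geqslant w_f(A)$, this already gives $\lim_{K\uparrow A}w_f(K)=w_f(A)$.

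Now the convexity identity closes the argument. Whenever $\mu,\nu\in\breve{\mathcal E}^+_f(K)$, the midpoint $\tfrac12(\mu+\nu)$ again lies in $\breve{\mathcal E}^+_f(K)$, and the pre-Hilbert structure furnished by $(H_1)$ gives
\[\|\mu-\nu\|^2=2I_f(\mu)+2I_f(\nu)-4I_f\!\Bigl(\tfrac{\mu+\nu}{2}\Bigr)\leqslant 2I_f(\mu)+2I_f(\nu)-4w_f(K).\]
Taking $\mu=\lambda_{K,f}$ and $\nu=\mu_K$, so that $I_f(\mu)=w_f(K)$, I obtain $\|\lambda_{K,f}-\mu_K\|^2\leqslant 2\bigl(I_f(\mu_K)-w_f(K)\bigr)\to0$. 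Together with $\mu_K\to\lambda$ strongly, the triangle inequality yields $\lambda_{K,f}\to\lambda=\lambda_{A,f}$ strongly, and hence vaguely since on $\mathcal E^+$ the strong topology is finer than the vague one; this is exactly (\ref{conv2}). For (\ref{conv3}), assume $f=U^\vartheta$, i.e.\ $\psi=0$. Then $\int f\,d\lambda_{K,f}=\langle\lambda_{K,f},\vartheta\rangle\to\langle\lambda,\vartheta\rangle=\int f\,d\lambda$ by strong continuity of the inner product, and combining this with $w_f(K)\to w_f(A)$ in the representation (\ref{cc'}) gives $c_{K,f}=w_f(K)-\int f\,d\lambda_{K,f}\to w_f(A)-\int f\,d\lambda=c_{A,f}$.

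I expect the real work to be concentrated in the second paragraph, namely constructing the approximants $\mu_K$ and showing both $\mu_K\to\lambda$ strongly and $I_f(\mu_K)\to w_f(A)$; the convexity identity of the third paragraph is the familiar engine in this framework and becomes routine once the approximants are in hand. The two delicate points are the strong continuity of the restriction $\lambda|_K\to\lambda$ (resting on $U^\lambda\in L^1(\lambda)$) and the convergence of the external-field integrals after renormalization by $\lambda(K)^{-1}$. The hypothesis $\psi=0$ in (\ref{conv3}) is genuinely needed: for general $\psi\in\Phi(X)$ vague convergence only supplies the one-sided estimate $\int\psi\,d\lambda\leqslant\liminf_{K\uparrow A}\int\psi\,d\lambda_{K,f}$, which is enough to identify the strong limit but not to pin down the exact limit of the equilibrium constants.
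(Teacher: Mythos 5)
Your proof is correct, and while it runs on the same engine as the paper --- the parallelogram identity in the pre-Hilbert space $\mathcal E$ plus the fact that the strong topology on $\mathcal E^+$ is finer than the vague one --- its implementation is genuinely different. The paper routes everything through the ``extremal measure'' $\xi_{A,f}$ of Lemma~\ref{la1}: since $w_f(K)\downarrow w_f(A)$ (Lemma~\ref{l-aux-1}, quoted from \cite{Z5a}), the net $(\lambda_{K,f})$ is a minimizing net for the problem on $A$, hence converges strongly and vaguely to $\xi_{A,f}$ by the strong completeness of $\mathcal E^+$ (Lemma~\ref{la3}), and $\xi_{A,f}=\lambda_{A,f}$ once the latter exists (Lemma~\ref{l-solv}); existence of $\lambda_{K,f}$ for compact $K$ also falls out of this machinery, since for compact $K$ the extremal measure automatically lies in $\breve{\mathcal E}^+(K)$ (Remarks~\ref{tmass}, \ref{tmass'}). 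You instead anchor the argument to the known limit: the normalized restrictions $\mu_K=\lambda(K)^{-1}\lambda|_K$ are explicit near-minimizers converging strongly to $\lambda_{A,f}$, and the parallelogram inequality \emph{inside} $\breve{\mathcal E}^+_f(K)$ squeezes $\lambda_{K,f}$ against $\mu_K$. What this buys: your proof of (\ref{conv2}) never uses strong completeness (perfectness enters only through ``strong implies vague''), and your second paragraph in effect re-proves the relevant case of Lemma~\ref{l-aux-1} rather than citing it. The price is in the existence step, where you invoke Corollary~\ref{qcomp} (hence the full capacitary proof of Theorem~\ref{th2'}), which is much heavier than what the paper needs for compact $K$; and the paper's completeness-based apparatus is not wasted effort, since it is reused where no solution is assumed (Step~3 of the proof of Theorem~\ref{th3'}). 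Two small points you should patch: the inequality $U^{\lambda|_{X\setminus K}}\leqslant U^\lambda$, hence $I(\lambda|_{X\setminus K})\leqslant\int_{X\setminus K}U^\lambda\,d\lambda$, presupposes $\kappa\geqslant0$, so the case of compact $X$ (where the kernel may take negative values) requires the reduction $\kappa\mapsto\kappa+q$ of Remark~\ref{fIII}; and ``dominated convergence along $K\uparrow A$'' is over a directed family of sets, not a sequence, so it should be justified by Lemma~\ref{l-lower} applied to $g=U^\lambda$. Your treatment of (\ref{conv3}) agrees with the paper's up to bookkeeping: the paper writes $c_{K,f}=\|\lambda_{K,f}\|^2+\langle\vartheta,\lambda_{K,f}\rangle$ and uses strong convergence of $\lambda_{K,f}$, while you use $c_{K,f}=w_f(K)-\langle\vartheta,\lambda_{K,f}\rangle$ together with $w_f(K)\to w_f(A)$; both rest on (\ref{cc'}) and the strong continuity of the inner product.
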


\begin{theorem}\label{th3'}Let\/ $(H_1)$, $(H_3)$, $(H_4)$, and\/ $(H_2')$ be fulfilled, and let\/ $\zeta^A$ denote the inner balayage of\/ $\zeta$ onto\/ $A$, the measure\/ $\zeta$ appearing in\/ $(H_2')$. If moreover\/\footnote{Under hypotheses $(H_1)$, $(H_3)$, and $(H_4)$, assumption $\zeta^A(X)=1$ is fulfilled, for instance, if $\zeta\in\mathcal E^+$ is a measure of unit total mass {\it concentrated on\/} $A$, i.e.\ $\zeta\in\breve{\mathcal E}^+(A)$. Yet another possibility, pertaining to the $\alpha$-Riesz kernels $|x-y|^{\alpha-n}$ on $\mathbb R^n$, $n\geqslant2$, where $0<\alpha<n$ and $\alpha\leqslant2$, requires $\zeta\in\mathcal E^+$ to be an {\it arbitrary\/} measure on $\mathbb R^n$ with $\zeta(\mathbb R^n)=1$, and $A$ to be {\it not inner\/ $\alpha$-thin at infinity}, which according to \cite{KM,Z-bal2} means that
\[\sum_{k\in\mathbb N}\,\frac{{\rm cap}_*(A_k)}{q^{k(n-\alpha)}}=\infty,\]
where $q\in(1,\infty)$ and $A_k:=A\cap\{x\in\mathbb R^n:\ q^k\leqslant|x|<q^{k+1}\}$. For the latter, see \cite[Corollary~5.3]{Z-bal2}.}
\begin{equation}\label{bal1}
\text{either \ ${\rm cap}_*(A)<\infty$, \ or \ $\zeta^A(X)=1$,}
\end{equation}
then, and only then, the solution\/ $\lambda_{A,f}$ to Problem\/~{\rm\ref{pr-main}} does exist, and it has the form\/\footnote{Representation (\ref{RR}) is particularly useful in applications. For instance, for the $\alpha$-Riesz kernel $|x-y|^{\alpha-n}$ on $\mathbb R^n$, $n\geqslant2$, of order $0<\alpha\leqslant2$, $\alpha<n$, it enables us to give an answer to Open question~2.1 raised by Ohtsuka in
\cite[Section~2.12]{O}. In view of \cite[Theorem~8.5]{Z-bal} and \cite[Theorem~4.2, Corollary~5.4]{Z-bal2}, such answer is in the positive if $\alpha<2$, and it is in the negative otherwise.}
\begin{equation}\label{RR}\lambda_{A,f}=\left\{
\begin{array}{cl} \zeta^A+\eta_{A,f}\gamma_A&\text{if \ ${\rm cap}_*(A)<\infty$},\\
\zeta^A&\text{otherwise},\\ \end{array} \right.
\end{equation}
where\/ $\gamma_A$ denotes the inner capacitary measure on\/ $A$, while
\begin{equation}\label{eqalt}
\eta_{A,f}:=\frac{1-\zeta^A(X)}{{\rm cap}_*(A)}\in[0,\infty).
\end{equation}

Furthermore, the above\/ $\lambda_{A,f}$ can alternatively be characterized by means of any one of the following three assertions:
\begin{itemize}
\item[{\rm(i)}] $\lambda_{A,f}$ is the unique measure of minimum energy norm in the class
\begin{equation}\label{gamma}
\Lambda_{A,f}:=\bigl\{\mu\in\mathcal E^+: \ U^\mu_f\geqslant\eta_{A,f}\text{ \ n.e.\ on\ $A$}\bigr\},
  \end{equation}
$\eta_{A,f}$ being introduced by formula\/ {\rm(\ref{eqalt})}. That is, $\lambda_{A,f}\in\Lambda_{A,f}$ and
\begin{equation*}
\|\lambda_{A,f}\|=\min_{\mu\in\Lambda_{A,f}}\,\|\mu\|.
\end{equation*}
\item[{\rm(ii)}] $\lambda_{A,f}$ is the unique measure of minimum potential in the class\/ $\Lambda_{A,f}$, introduced by means of\/ {\rm(\ref{gamma})}. That is, $\lambda_{A,f}\in\Lambda_{A,f}$ and\/\footnote{This implies immediately that
$\lambda_{A,f}$ can also be characterized as the unique measure of minimum $f$-we\-i\-g\-h\-t\-ed potential in the class $\Lambda_{A,f}$ --- now, however, nearly everywhere on $X$: \begin{equation*} U^{\lambda_{A,f}}_f=\min_{\mu\in\Lambda_{A,f}}\,U^\mu_f\text{ \ n.e.\ on\ $X$}\vspace{-5mm}.\end{equation*}}
\begin{equation*}
U^{\lambda_{A,f}}=\min_{\mu\in\Lambda_{A,f}}\,U^\mu\text{ \ on\ $X$}.
\end{equation*}
\item[{\rm(iii)}] $\lambda_{A,f}$ is the only measure in\/ $\mathcal E^+(A)$ having the property
\begin{equation*}
U^{\lambda_{A,f}}_f=\eta_{A,f}\text{ \ n.e.\ on\ $A$}.
\end{equation*}
\end{itemize}

In addition, the inner\/ $f$-weighted equilibrium constant\/ $c_{A,f}$, introduced by\/ {\rm(\ref{cc'})}, admits an alternative representation
\begin{equation}\label{const-alt}c_{A,f}=\eta_{A,f},\end{equation}
and hence\/ {\rm(\ref{conv3})} can be specified as follows:
\begin{equation}\label{conv3'}c_{K,f}\downarrow c_{A,f}\text{ \ in $\mathbb R$ as $K\uparrow A$}.\end{equation}
\end{theorem}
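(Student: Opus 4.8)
The plan is to exploit hypothesis $(H_2')$ to recast Problem~\ref{pr-main} as a nearest-point problem in the pre-Hilbert space $\mathcal E$. Since $f=-U^\zeta$ with $\zeta\in\mathcal E^+$, for every $\mu\in\mathcal E^+$ the mutual energy $\langle\mu,\zeta\rangle$ is finite, so $f$ is $\mu$-integrable with $\int f\,d\mu=-\langle\mu,\zeta\rangle$; hence $\breve{\mathcal E}^+_f(A)=\breve{\mathcal E}^+(A)$ and
\[I_f(\mu)=\|\mu\|^2-2\langle\mu,\zeta\rangle=\|\mu-\zeta\|^2-\|\zeta\|^2.\]
Thus Problem~\ref{pr-main} amounts to projecting $\zeta$ onto the convex set $\breve{\mathcal E}^+(A)$, and, the functional being convex quadratic, a measure $\lambda\in\breve{\mathcal E}^+(A)$ is the (unique) minimizer if and only if $\int U^\lambda_f\,d\nu\geqslant\int U^\lambda_f\,d\lambda$ for all $\nu\in\breve{\mathcal E}^+(A)$ --- here I use $U^\lambda_f=U^{\lambda-\zeta}$. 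I would run the whole argument in this projection picture.

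Next I would exhibit the minimizer. Define $\lambda:=\zeta^A+\eta_{A,f}\gamma_A$, the capacitary term being absent when ${\rm cap}_*(A)=\infty$ (where $\eta_{A,f}=0$). From the theory of inner balayage and inner capacitary measures (Sections~\ref{sec-bal}, \ref{sec-cap}) one has $\zeta^A,\gamma_A\in\mathcal E^+(A)$, $U^{\zeta^A}=U^\zeta$ n.e.\ on $A$ with $U^{\zeta^A}\leqslant U^\zeta$ on $X$, and $U^{\gamma_A}=1$ n.e.\ on $A$ with $U^{\gamma_A}\leqslant1$ on $X$ (this last step uses $(H_4)$). Under (\ref{bal1}) the normalization $\zeta^A(X)+\eta_{A,f}\,{\rm cap}_*(A)=1$ holds, so $\lambda\in\breve{\mathcal E}^+(A)$, and
\[U^\lambda_f=(U^{\zeta^A}-U^\zeta)+\eta_{A,f}U^{\gamma_A}=\eta_{A,f}\ \text{n.e.\ on }A,\qquad U^\lambda_f\leqslant\eta_{A,f}\ \text{on }X.\]
Because measures of finite energy do not charge sets of inner capacity zero, $\int U^\lambda_f\,d\nu=\eta_{A,f}=\int U^\lambda_f\,d\lambda$ for every $\nu\in\breve{\mathcal E}^+(A)$; the variational inequality holds (with equality) and $\lambda$ is the minimizer. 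This yields existence and formula (\ref{RR}), while (\ref{cc'}) gives at once $c_{A,f}=\int U^\lambda_f\,d\lambda=\eta_{A,f}$, that is, (\ref{const-alt}).

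For the converse --- necessity of (\ref{bal1}) --- suppose ${\rm cap}_*(A)=\infty$ and $\zeta^A(X)<1$. The crux, and what I expect to be the main obstacle, is that Theorem~\ref{th2'} is unavailable here, so I would argue purely through the projection structure. The relation $U^{\zeta^A}=U^\zeta$ n.e.\ on $A$ gives $\langle\zeta-\zeta^A,\nu\rangle=0$ for all $\nu\in\mathcal E^+(A)$, so $\zeta^A$ is the metric projection of $\zeta$ onto the cone $\mathcal E^+(A)$. Since ${\rm cap}_*(A)=\infty$, there are $\sigma_j\in\mathcal E^+(A)$ with $\sigma_j(X)=1-\zeta^A(X)$ and $\|\sigma_j\|\to0$; then $\mu_j:=\zeta^A+\sigma_j\in\breve{\mathcal E}^+(A)$ and $\|\mu_j-\zeta\|\to\|\zeta^A-\zeta\|$, whence $w_f(A)=\|\zeta^A-\zeta\|^2-\|\zeta\|^2$. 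A minimizer $\lambda$ would attain this distance within $\mathcal E^+(A)$, so by uniqueness of the projection onto the cone $\lambda=\zeta^A$, contradicting $\lambda(X)=1>\zeta^A(X)$. Hence no minimizer exists, and (\ref{bal1}) is also necessary.

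Finally I would read off the alternative characterizations from the two relations $U^\lambda_f=\eta_{A,f}$ n.e.\ on $A$ and $U^\lambda_f\leqslant\eta_{A,f}$ on $X$. For (iii): any $\mu\in\mathcal E^+(A)$ with $U^\mu_f=\eta_{A,f}$ n.e.\ on $A$ satisfies $U^{\mu-\lambda}=0$ n.e.\ on $A$, hence $(\mu-\lambda)$-a.e., so $\|\mu-\lambda\|^2=0$ and $\mu=\lambda$ by the energy principle. For (ii): every $\mu\in\Lambda_{A,f}$ obeys $U^\mu\geqslant U^\zeta+\eta_{A,f}=U^\lambda$ n.e.\ on $A$, thus $\lambda$-a.e., and the domination principle (in force under $(H_4)$, as already needed for $\zeta^A$) upgrades this to $U^\mu\geqslant U^\lambda$ on $X$; as $\lambda\in\Lambda_{A,f}$ this is the minimum-potential property, unique by the energy principle. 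Then (i) follows, since $\langle\mu,\lambda\rangle=\int U^\mu\,d\lambda\geqslant\int U^\lambda\,d\lambda=\|\lambda\|^2$ together with $\|\mu-\lambda\|^2\geqslant0$ force $\|\mu\|\geqslant\|\lambda\|$, with equality only for $\mu=\lambda$. Lastly, as $f=U^{-\zeta}$ is the potential of a measure of $\mathcal E$, Theorem~\ref{th4'} gives $c_{K,f}\to c_{A,f}$; writing $c_{K,f}=\eta_{K,f}=(1-\zeta^K(X))/{\rm cap}_*(K)$ and invoking $\zeta^K(X)\uparrow\zeta^A(X)$ and ${\rm cap}_*(K)\uparrow{\rm cap}_*(A)$ as $K\uparrow A$ shows the net is eventually nonincreasing, which is exactly (\ref{conv3'}).
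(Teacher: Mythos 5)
Your proposal is correct, and for the heart of the theorem --- the necessity of (\ref{bal1}) --- it takes a genuinely different route from the paper. The paper's proof of necessity (Step~3 of Section~\ref{sec-pr3}) runs through compact exhaustion: it invokes Lemma~\ref{la3} to produce the solutions $\lambda_{K,f}=\zeta^K+\widetilde{\eta}_{K,f}\lambda_K$, Theorem~\ref{th-bal-cont} for the strong convergence $\zeta^K\to\zeta^A$, the identification $\lambda_K\to0$ (via the extremal measure $\xi_A$ and ${\rm cap}_*(A)=\infty$), and finally Lemmas~\ref{la1} and \ref{l-solv} to conclude $\lambda_{A,f}=\xi_{A,f}=\zeta^A$, a contradiction. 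You instead stay entirely inside the projection picture: writing $I_f(\mu)=\|\mu-\zeta\|^2-\|\zeta\|^2$, you use ${\rm cap}_*(A)=\infty$ to build competitors $\zeta^A+\sigma_j\in\breve{\mathcal E}^+(A)$ with $\|\sigma_j\|\to0$, conclude $w_f(A)=\|\zeta^A-\zeta\|^2-\|\zeta\|^2$, and then observe that any minimizer would be a nearest point to $\zeta$ in $\mathcal E^+(A)$, hence equal to $\zeta^A$ by uniqueness of the metric projection onto the strongly complete convex cone $\mathcal E^+(A)$ (Corollary~\ref{cor-bal}), contradicting $\lambda(X)=1>\zeta^A(X)$. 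This is shorter and bypasses the minimizing-net machinery of Section~\ref{sec-lemmas} altogether; what the paper's route buys in exchange is the convergence statement (\ref{eq-conv}) itself, which it needs anyway for Theorem~\ref{th4'} and for Step~4. The remaining parts of your argument (existence via $\zeta^A+\eta_{A,f}\gamma_A$, the characterizations (i)--(iii), and (\ref{const-alt})) are in substance the paper's Steps~1--2, with Theorems~\ref{th-ch2} and \ref{th-intr} reproved by hand through the same potential identities.

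Two small points to tighten. First, in (iii) you pass from ``$U^{\mu}=U^{\lambda}$ n.e.\ on $A$'' to ``$(\mu-\lambda)$-a.e.'' for an arbitrary, possibly unbounded $\mu\in\mathcal E^+(A)$; Lemma~\ref{l-negl} requires the exceptional set to be $\mu$-$\sigma$-finite, so either work with the restrictions $\mu|_K$, $K\uparrow A$ (Lemmas~\ref{l-lower}, \ref{222}), or simply cite the uniqueness clause of Corollary~\ref{cor-bal}, as the paper does. Second, in the proof of (\ref{conv3'}) the monotonicity of $\zeta^K(X)$ in $K$ is not a consequence of the strong (or vague) convergence $\zeta^K\to\zeta^A$; it requires Propositions~\ref{cor-rest} and \ref{cor-mass}, namely $\zeta^K=(\zeta^{K'})^K$ for $K\subset K'$ followed by the mass inequality (\ref{eq-mass}) valid under $(H_4)$ --- which is exactly the paper's Step~4, and should be stated rather than merely ``invoked.''
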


\begin{corollary}\label{cortm}
Under the assumptions of Theorem\/~{\rm\ref{th3'}}, if moreover\/ $X$ is\/ $\sigma$-co\-m\-p\-act,\footnote{A locally compact space is said to be {\it $\sigma$-com\-pact\/} if it is representable as a countable union of compact sets \cite[Section~I.9, Definition~5]{B1}.}
 then\/ $\lambda_{A,f}$ is a measure of minimum total mass in the class\/ $\Lambda_{A,f}$, i.e.
\begin{equation}\label{eq-t-m}\lambda_{A,f}(X)=\min_{\mu\in\Lambda_{A,f}}\,\mu(X)\quad\bigl({}=1\bigr).\end{equation}
\end{corollary}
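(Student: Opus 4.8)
The plan is to reduce the claim to the single inequality $\mu(X)\geqslant1$ for every $\mu\in\Lambda_{A,f}$, since Theorem~\ref{th3'} already provides $\lambda_{A,f}\in\Lambda_{A,f}$ together with $\lambda_{A,f}(X)=1$ (the minimizer having unit total mass). Thus the minimum on the right of (\ref{eq-t-m}) is attained at $\lambda_{A,f}$ as soon as no competitor in $\Lambda_{A,f}$ carries smaller mass. The only ingredient I would borrow from Theorem~\ref{th3'} beyond this is its characterization (ii), namely $U^{\lambda_{A,f}}\leqslant U^\mu$ pointwise on $X$ for all $\mu\in\Lambda_{A,f}$; note that $f$ itself never enters the argument explicitly, the unweighted potential inequality (ii) doing all the work.

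The device for converting this potential inequality into a mass inequality is to test against capacitary potentials that increase to the constant $1$, and this is exactly where $\sigma$-compactness is used. Writing $X=\bigcup_n K_n$ as a countable increasing union of compact sets, I would take the inner capacitary measures $\gamma_{K_n}\in\mathcal E^+$ (which exist and have finite energy, compact sets being of finite capacity under the perfect kernel). Invoking $(H_4)$ (the first maximum principle) gives $U^{\gamma_{K_n}}\leqslant1$ on all of $X$, while the theory of inner capacitary measures recalled in Section~\ref{sec-cap} yields $U^{\gamma_{K_n}}\uparrow1$ nearly everywhere on $X$ as $K_n\uparrow X$. Then, for a fixed $\mu\in\Lambda_{A,f}$, the symmetry of the kernel (Fubini--Tonelli) together with characterization (ii) produces the chain
\begin{equation*}
\int U^{\gamma_{K_n}}\,d\lambda_{A,f}=\int U^{\lambda_{A,f}}\,d\gamma_{K_n}\leqslant\int U^\mu\,d\gamma_{K_n}=\int U^{\gamma_{K_n}}\,d\mu\leqslant\mu(X),
\end{equation*}
the last step because $U^{\gamma_{K_n}}\leqslant1$ and $\mu\geqslant0$.

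It remains to pass to the limit on the left. Since $\lambda_{A,f}\in\mathcal E^+$ does not charge sets of inner capacity zero, the nearly-everywhere convergence $U^{\gamma_{K_n}}\uparrow1$ holds $\lambda_{A,f}$-almost everywhere, so monotone convergence gives $\int U^{\gamma_{K_n}}\,d\lambda_{A,f}\to\lambda_{A,f}(X)=1$. Letting $n\to\infty$ in the displayed chain therefore yields $1\leqslant\mu(X)$ for the arbitrary $\mu\in\Lambda_{A,f}$, which is exactly what was needed. I expect the main obstacle to be the rigorous justification of $U^{\gamma_{K_n}}\uparrow1$ n.e.\ on $X$ and of the ensuing monotone passage to the limit: this is precisely the point at which $\sigma$-compactness is indispensable (it furnishes a genuine sequence, and hence the classical monotone convergence theorem rather than a more delicate net argument), and where one must combine the continuity of capacitary potentials under exhaustion with the vanishing of finite-energy measures on sets of inner capacity zero.
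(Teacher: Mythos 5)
Your proposal is correct, and its skeleton coincides with the paper's: both first observe that $\lambda_{A,f}\in\Lambda_{A,f}$ with $\lambda_{A,f}(X)=1$, then invoke characterization (ii) of Theorem~\ref{th3'} to get $U^{\lambda_{A,f}}\leqslant U^\mu$ everywhere on $X$, and finally convert this potential inequality into the mass inequality $1\leqslant\mu(X)$. The difference lies entirely in that last conversion step. The paper dispatches it in one line by citing the principle of positivity of mass (Theorem~\ref{pr-pos}, quoted from \cite{Z-arx-22}), which is stated precisely for $\sigma$-compact $X$ under $(H_4)$. You instead re-derive that principle inline: testing against the capacitary measures $\gamma_{K_n}$ of a compact exhaustion, using Frostman's maximum principle for $U^{\gamma_{K_n}}\leqslant1$ on $X$, equality (\ref{frr}) on each $K_n$ together with the countable subadditivity of inner capacity (Lemma~\ref{str}) for $U^{\gamma_{K_n}}\to1$ n.e.\ on $X$ (the monotonicity you assert additionally needs the domination principle, also contained in $(H_4)$), Lemma~\ref{l-negl} to pass from n.e.\ to $\lambda_{A,f}$-a.e., and Fubini plus dominated/monotone convergence to conclude. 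All of these ingredients are available in the paper, so your argument is sound; in effect you have reconstructed the standard proof of the positivity-of-mass principle rather than citing it. What your route buys is self-containedness and a transparent view of exactly where $\sigma$-compactness enters (the countability of the exhaustion, making the exceptional-set and convergence arguments sequential); what the paper's route buys is brevity and reliance on a result already established in full generality elsewhere.
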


\begin{remark}\label{t-m-nonun} However, extremal property (\ref{eq-t-m}) cannot serve as an alternative characterization of the inner $f$-weighted equilibrium measure $\lambda_{A,f}$, for it does not determine $\lambda_{A,f}$ uniquely within $\Lambda_{A,f}$. Indeed, consider the $\alpha$-Riesz kernel $|x-y|^{\alpha-n}$ of order $\alpha\leqslant2$, $\alpha<n$, on $\mathbb R^n$, $n\geqslant2$, a proper, closed subset $A$ of $\mathbb R^n$ that is not $\alpha$-thin at infinity (take, for instance, $A:=\{|x|\geqslant 1\}$), and let $f$ be given by (\ref{fform'}) with $\zeta\in\breve{\mathcal E}^+(\mathbb R^n\setminus A)$. Applying \cite[Corollary~5.3]{Z-bal2} we get
\begin{equation}\label{eq-t-m1}\zeta^A(\mathbb R^n)=\zeta(\mathbb R^n)=1.\end{equation}
Noting that then $\eta_{A,f}=0$, and hence $\zeta,\zeta^A\in\Lambda_{A,f}$ (cf.\ (\ref{eq-pr-10})), we conclude from (\ref{eq-t-m1}) that there are actually infinitely many measures of minimum total mass in $\Lambda_{A,f}$, for so is every measure of the form $a\zeta+b\zeta^A$, where $a,b\in[0,1]$ and $a+b=1$.\end{remark}

Let hypotheses $(H_1)$, $(H_3)$, $(H_4)$, and $(H_2')$ be fulfilled; and let ${\rm cap}_*(A)=\infty$, for if not, the solution $\lambda_{A,f}$ to Problem~\ref{pr-main} does exist by Theorem~\ref{th2'} or \ref{th3'}. Then the following corollary to Theorem~\ref{th3'} holds, where $\zeta$ is the measure appearing in (\ref{fform'}).

\begin{corollary}\label{th5'} The minimizer\/ $\lambda_{A,f}$ does not exist if\/ $\zeta(X)<1$, and it exists if\/ $\zeta$ is a measure of unit total mass concentrated on\/ $A$.\footnote{The latter assertion would fail in general if $\zeta$ were not required to be concentrated on $A$. See Theorem~\ref{riesz}(e); for illustration, see the last claim in Example~\ref{ex} (pertaining to the set $F_2$).} In the latter case, $\lambda_{A,f}=\zeta$.\end{corollary}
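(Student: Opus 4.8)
The plan is to derive everything directly from Theorem~\ref{th3'}, specialized to the standing case ${\rm cap}_*(A)=\infty$. In that case the first alternative in (\ref{bal1}) is excluded, so Theorem~\ref{th3'} asserts that the minimizer $\lambda_{A,f}$ exists if and only if $\zeta^A(X)=1$, and that whenever it does exist it is given by the lower branch of (\ref{RR}), namely $\lambda_{A,f}=\zeta^A$. Thus both assertions of the corollary reduce to controlling the total mass $\zeta^A(X)$ of the inner balayage of $\zeta$ onto $A$ and, in the second case, to identifying $\zeta^A$ itself.

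For the first assertion I would invoke the fact that inner balayage onto $A$ never increases total mass, i.e.\ $\zeta^A(X)\leqslant\zeta(X)$ (a basic property of the map $\zeta\mapsto\zeta^A$ established in Section~\ref{sec-bal}). Hence $\zeta(X)<1$ forces $\zeta^A(X)<1$, so neither alternative in (\ref{bal1}) holds, and the ``only then'' part of Theorem~\ref{th3'} yields the non-existence of $\lambda_{A,f}$.

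For the second assertion, suppose $\zeta\in\breve{\mathcal E}^+(A)$, i.e.\ $\zeta$ has unit total mass and is concentrated on $A$. Because $\zeta$ is already carried by $A$, it is a fixed point of inner balayage onto $A$: one checks that $\zeta$ satisfies the characteristic conditions defining $\zeta^A$ (concentration on $A$ together with $U^{\zeta^A}=U^\zeta$ n.e.\ on $A$ and $U^{\zeta^A}\leqslant U^\zeta$ on $X$, all holding trivially with equality), whence $\zeta^A=\zeta$ by the uniqueness of the inner balayage furnished under $(H_4)$. In particular $\zeta^A(X)=\zeta(X)=1$, which is precisely the sufficient condition recorded in the footnote to Theorem~\ref{th3'}; thus the minimizer exists, and the lower branch of (\ref{RR}) gives $\lambda_{A,f}=\zeta^A=\zeta$.

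The argument is short because the heavy lifting is done by Theorem~\ref{th3'}, the only genuine input beyond it being the behaviour of inner balayage with respect to total mass. The single point deserving care is the identity $\zeta^A=\zeta$ for $\zeta$ concentrated on $A$, which I expect to be the main (though still routine) obstacle: it rests on the uniqueness of $\zeta^A$ under the maximum principles of $(H_4)$ rather than on total mass alone, and it is this identity --- not merely the equality of masses --- that pins down the value $\lambda_{A,f}=\zeta$.
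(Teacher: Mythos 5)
Your proposal is correct and takes essentially the same route as the paper: non-existence follows from the mass inequality (\ref{eq-mass}) fed into the ``only then'' part of Theorem~\ref{th3'}, and existence from the fixed-point identity $\zeta^A=\zeta$, which the paper obtains from the orthogonal-projection half of Corollary~\ref{cor-bal} while you obtain it from the uniqueness half (the unique measure in $\mathcal E^+(A)$ satisfying $U^{\zeta^A}=U^\zeta$ n.e.\ on $A$). One small correction: that uniqueness rests on the strong closedness hypothesis $(H_3)$ via Corollary~\ref{cor-bal} (or on Theorem~\ref{th-intr}, uniqueness within $\mathcal E'(A)\supset\mathcal E^+(A)$), not on the maximum principles of $(H_4)$ as you suggest --- $(H_4)$ enters only because the whole balayage theory of Section~\ref{sec-bal} presupposes the domination principle.
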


\begin{remark}\label{prr}All the above-quoted results hold true and are new for the Green kernels associated with the Laplacian on Greenian sets in $\mathbb R^n$, $n\geqslant2$ (thus in particular for the Newtonian kernel $|x-y|^{2-n}$ on $\mathbb R^n$, $n\geqslant3$), as well as for the $\alpha$-Riesz kernels $|x-y|^{\alpha-n}$ and the associated $\alpha$-Green kernels on $\mathbb R^n$, where $0<\alpha<2\leqslant n$. Furthermore, Theorems~\ref{th2'}, \ref{th4'} and Corollary~\ref{qcomp} also hold true           and are new for the $\alpha$-Riesz kernels of order $2<\alpha<n$ as well as for the logarithmic kernel $-\log\,|x-y|$ on a closed disc in $\mathbb R^2$ of radius ${}<1$, and for the Deny kernels on $\mathbb R^n$, defined with the aid of Fourier transformation (see condition $(A)$ in \cite[Section~1]{De2}, cf.\ \cite[Section~VI.1.2]{L}).
For more details, see Example~\ref{rem:clas} below.
\end{remark}

\subsection{Applications to the Riesz kernels} Let\/ $X:=\mathbb R^n$, $n\geqslant2$, and let\/ $\kappa(x,y)$ be the $\alpha$-Riesz kernel $|x-y|^{\alpha-n}$ of order $0<\alpha<n$. Then the above-quoted results on the (un)solvability of Problem~\ref{pr-main} can be specified as follows.

\begin{theorem}\label{riesz} Assume that an external field\/ $f$ is of form\/ {\rm(\ref{fform})}, that\/ $A\subset\mathbb R^n$ is quasiclosed\/ {\rm(}or, more generally, that\/ $\mathcal E^+(A)$ is strongly closed\/{\rm)}, and that\/ $w_f(A)<\infty$.
\begin{itemize}
  \item[{\rm(a)}] If moreover\/ ${\rm cap}_*(A)<\infty$, then the solution\/ $\lambda_{A,f}$ to Problem\/~{\rm\ref{pr-main}} does exist.
\end{itemize}

In the remaining case\/ ${\rm cap}_*(A)=\infty$, assume\/ $\alpha\leqslant2$, and consider\/ $f$ of form\/ {\rm(\ref{fform'})}. Then the following\/ {\rm(b)}--{\rm(e)} hold true, where\/ $\zeta$ is the measure appearing in\/ {\rm(\ref{fform'})}.
\begin{itemize}
\item[{\rm(b)}] $\lambda_{A,f}$ exists if and only if\/ $\zeta^A(\mathbb R^n)=1$, and in the affirmative case\/ $\lambda_{A,f}=\zeta^A$.
  \item[{\rm(c)}] $\lambda_{A,f}$ does not exist if\/ $\zeta(\mathbb R^n)<1$, and it exists if\/ $\zeta$ is a measure of unit total mass concentrated on\/ $A$. In the latter case, $\lambda_{A,f}=\zeta$.
  \item[{\rm(d)}] If\/ $A$ is not inner\/ $\alpha$-thin at infinity, then\/ $\lambda_{A,f}$ exists if and only if\/ $\zeta(\mathbb R^n)=1$, and in the affirmative case\/ $\lambda_{A,f}=\zeta^A$.\footnote{If\/ $A$ is not inner\/ $\alpha$-thin at infinity, then the requirement ${\rm cap}_*(A)=\infty$ does hold automatically, cf.\ \cite[Section~2]{Z-bal2}, and hence can be dropped.\label{f-thin}}
      \item[{\rm(e)}] Assume that\/ $\overline{A}:={\rm Cl}_{\mathbb R^n}A$ is\/ $\alpha$-thin at infinity, and that\/ $D:=\mathbb R^n\setminus\overline{A}$ is connected unless\/ $\alpha<2$. Then\/ $\lambda_{A,f}$ does not exist whenever\/ $\zeta(D)>0$.\footnote{Actually, we require $D$ to be connected (when $\alpha=2$) only in order to simplify the formulation. Compare with \cite{Z-bal} (Lemma~7.1 and Theorems~7.2, 8.7).}
\end{itemize}
\end{theorem}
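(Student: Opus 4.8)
The plan is to read off each of the five assertions from the general Theorems~\ref{th2'} and~\ref{th3'} and Corollary~\ref{th5'}, after checking that the $\alpha$-Riesz kernel fits their hypotheses. First I would record the two classical structural facts. The kernel $|x-y|^{\alpha-n}$ is perfect on $\mathbb{R}^n$ for every $0<\alpha<n$, so $(H_1)$ holds throughout; and it satisfies the first and second maximum principles exactly when $\alpha\leqslant2$, so $(H_4)$ holds in the range relevant to (b)--(e). Hypothesis $(H_3)$ is built into the assumptions ($A$ quasiclosed, whence $\mathcal{E}^+(A)$ is strongly closed by Theorem~\ref{l-quasi}, or $\mathcal{E}^+(A)$ strongly closed outright). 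Since $f$ is of form~(\ref{fform}), $(H_2)$ holds, and because $w_f(A)<\infty$ is assumed, Lemma~\ref{l-aux-3} upgrades this to~(\ref{fin}).

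Granting these verifications, (a) is immediate from Theorem~\ref{th2'}, whose hypotheses $(H_1)$--$(H_3)$ and ${\rm cap}_*(A)<\infty$ are now all in force. For the remaining parts ${\rm cap}_*(A)=\infty$, $\alpha\leqslant2$, and $f=-U^\zeta$ is of form~$(H_2')$, so $(H_1)$, $(H_3)$, $(H_4)$, $(H_2')$ all hold and Theorem~\ref{th3'} applies. Because ${\rm cap}_*(A)=\infty$, the first alternative in~(\ref{bal1}) is excluded, so~(\ref{bal1}) reduces to $\zeta^A(\mathbb{R}^n)=1$; Theorem~\ref{th3'} then gives existence of $\lambda_{A,f}$ if and only if $\zeta^A(\mathbb{R}^n)=1$, and in that case~(\ref{RR}) collapses to its lower branch $\lambda_{A,f}=\zeta^A$. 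This is exactly (b). Assertion (c) is the direct transcription of Corollary~\ref{th5'} to the present kernel, and is consistent with (b) since a $\zeta\in\breve{\mathcal{E}}^+(A)$ satisfies $\zeta^A=\zeta$.

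For (d) I would couple (b) with the mass-conservation theorem for inner balayage: by \cite[Corollary~5.3]{Z-bal2}, if $A$ is not inner $\alpha$-thin at infinity then $\zeta^A(\mathbb{R}^n)=\zeta(\mathbb{R}^n)$ for every $\zeta\in\mathcal{E}^+$. Hence $\zeta^A(\mathbb{R}^n)=1$ holds precisely when $\zeta(\mathbb{R}^n)=1$, and (b) delivers both the existence criterion and the identity $\lambda_{A,f}=\zeta^A$. The standing requirement ${\rm cap}_*(A)=\infty$ is automatic in this situation (cf.\ footnote~\ref{f-thin}), so it may be suppressed.

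The genuinely delicate assertion is (e), which I expect to be the main obstacle. By (b) it suffices to prove the strict mass loss $\zeta^A(\mathbb{R}^n)<1$ under the hypotheses $\overline{A}$ $\alpha$-thin at infinity and $\zeta(D)>0$. Invoking the linearity of inner balayage in the swept measure, $\zeta^A=\int\varepsilon_x^A\,d\zeta(x)$, so that $\zeta^A(\mathbb{R}^n)=\int\varepsilon_x^A(\mathbb{R}^n)\,d\zeta(x)$, where $\varepsilon_x^A(\mathbb{R}^n)=1$ for $x\in A$ and $\varepsilon_x^A(\mathbb{R}^n)\leqslant1$ in general. The heart of the matter is the strict inequality $\varepsilon_x^A(\mathbb{R}^n)<1$ for every $x\in D$: granting it, positivity of $\zeta$ on $D$ forces $\int_D\varepsilon_x^A(\mathbb{R}^n)\,d\zeta(x)<\zeta(D)$ and thus $\zeta^A(\mathbb{R}^n)<\zeta(\mathbb{R}^n)\leqslant1$, so no minimizer exists by (b). This strict inequality is exactly where $\alpha$-thinness of $\overline{A}$ at infinity is exploited, through the fine balayage theory of \cite{Z-bal} (Lemma~7.1 and Theorems~7.2, 8.7): thinness at infinity produces a positive escape of mass to infinity from any exterior point, so the balayage from $x\in D$ cannot recover unit mass. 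The connectedness of $D$ in the case $\alpha=2$ ensures that every point of $D$ communicates with the neighbourhood of infinity through which this escape takes place (ruling out bounded components of $D$ surrounded by $A$, from which mass would be fully recovered); for $\alpha<2$ the jumps inherent in the kernel make escape possible from every exterior point regardless, so that hypothesis becomes superfluous.
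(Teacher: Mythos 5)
Your treatment of (a)--(d) coincides with the paper's own proof: (a) is a direct application of Theorem~\ref{th2'} (the $\alpha$-Riesz kernel being perfect for every $0<\alpha<n$, cf.\ Example~\ref{rem:clas}); (b) is Theorem~\ref{th3'} with the first alternative of (\ref{bal1}) ruled out by ${\rm cap}_*(A)=\infty$, so that (\ref{RR}) collapses to $\lambda_{A,f}=\zeta^A$; (c) is Corollary~\ref{th5'}; and (d) follows by combining \cite[Corollary~5.3]{Z-bal2} with (b) --- the paper gets the "only if" part from (c) instead, but the two derivations use the same ingredients and are equivalent.

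The genuine divergence is in (e). The paper never touches balayage of Dirac measures: it reduces $\zeta^A(\mathbb R^n)<1$ to the strict mass loss (\ref{ineq}) for the restriction $\zeta|_D$, sweeps $\zeta|_D$ onto the \emph{closed} set $\overline{A}$, where \cite[Theorem~8.7]{Z-bal} applies verbatim, and then descends from $\overline{A}$ to $A$ via balayage "with a rest" (Proposition~\ref{cor-rest}) together with mass non-increase (Proposition~\ref{cor-mass}). Your route --- disintegrate $\zeta^A=\int\varepsilon_x^A\,d\zeta(x)$ and prove $\varepsilon_x^A(\mathbb R^n)<1$ pointwise on $D$ --- is workable in the Riesz setting, but it silently assumes two things. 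First, $\varepsilon_x$ has infinite energy, so $\varepsilon_x^A$ and the superposition formula lie outside the finite-energy framework of the present paper (Theorem~\ref{th-intr} defines $\mu^A$ only for $\mu\in\mathcal E^+$); they belong to the extended inner Riesz balayage theory of \cite{Z-bal}, where the integral representation is a substantive theorem valid for $\alpha\leqslant2$, not a formal "linearity". Second, the pointwise strict inequality supplied by \cite{Z-bal} (Lemma~7.1, Theorem~7.2) concerns the closed set $\overline{A}$; to transfer it to $A$ itself you still need the same Proposition~\ref{cor-rest}/\ref{cor-mass} step the paper makes explicit, namely $\varepsilon_x^A=(\varepsilon_x^{\overline{A}})^A$, whence $\varepsilon_x^A(\mathbb R^n)\leqslant\varepsilon_x^{\overline{A}}(\mathbb R^n)<1$. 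With those two points filled in, your integration step (strict inequality on a set of positive $\zeta$-measure, plus $\varepsilon_x^A(\mathbb R^n)\leqslant1$ elsewhere) is sound, and the two arguments become equivalent in substance; the paper's measure-level reduction is simply the economical way of reaching \cite[Theorem~8.7]{Z-bal} without invoking the superposition theorem. Your reading of the role of connectedness of $D$ when $\alpha=2$ (excluding bounded components from which swept mass is fully recovered) agrees with the paper's footnote.
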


The above results on the solvability of Problem~\ref{pr-main} for the $\alpha$-Riesz kernels, included in the present work rather for illustration purposes, can actually be much strengthened, which we plan to pursue in a subsequent paper.

\begin{example}\label{ex} On $\mathbb R^3$, consider the kernel $1/|x-y|$ and the rotation bodies
\begin{equation*}F_i:=\bigl\{x\in\mathbb R^3: \ 0\leqslant x_1<\infty, \
x_2^2+x_3^2\leqslant\varrho_i^2(x_1)\bigr\}, \ i=1,2,3,\end{equation*}
where
\begin{align*}
\varrho_1(x_1)&:=x_1^{-s}\text{ \ with\ }s\in[0,\infty),\\
\varrho_2(x_1)&:=\exp(-x_1^s)\text{ \ with\ }s\in(0,1],\\
\varrho_3(x_1)&:=\exp(-x_1^s)\text{ \ with\ }s\in(1,\infty).
\end{align*}
As can be derived from estimates in \cite[Section~V.1, Example]{L}, $F_1$ is not $2$-thin at infinity, $F_2$ is $2$-thin at infinity, though has infinite Newtonian capacity, whereas $F_3$ is of finite Newtonian capacity. Therefore, by Theorem~\ref{riesz}, $\lambda_{F_3,f}$ exists for any $f$ of form (\ref{fform}), see (a). Let now $f$ be of form (\ref{fform'}) with the measure $\zeta$ involved. Then $\lambda_{F_1,f}$ exists if and only if $\zeta(\mathbb R^n)=1$, see (d); whereas $\lambda_{F_2,f}$ exists if and only if both $\zeta(\mathbb R^n)=1$ and $S(\zeta)\subset F_2$ hold true, see (c) and (e). (Here $S(\zeta)$ is the support of $\zeta$.)\end{example}

\begin{figure}[htbp]
\begin{center}
\vspace{-.8in}
\hspace{-.1in}\includegraphics[width=4.6in]{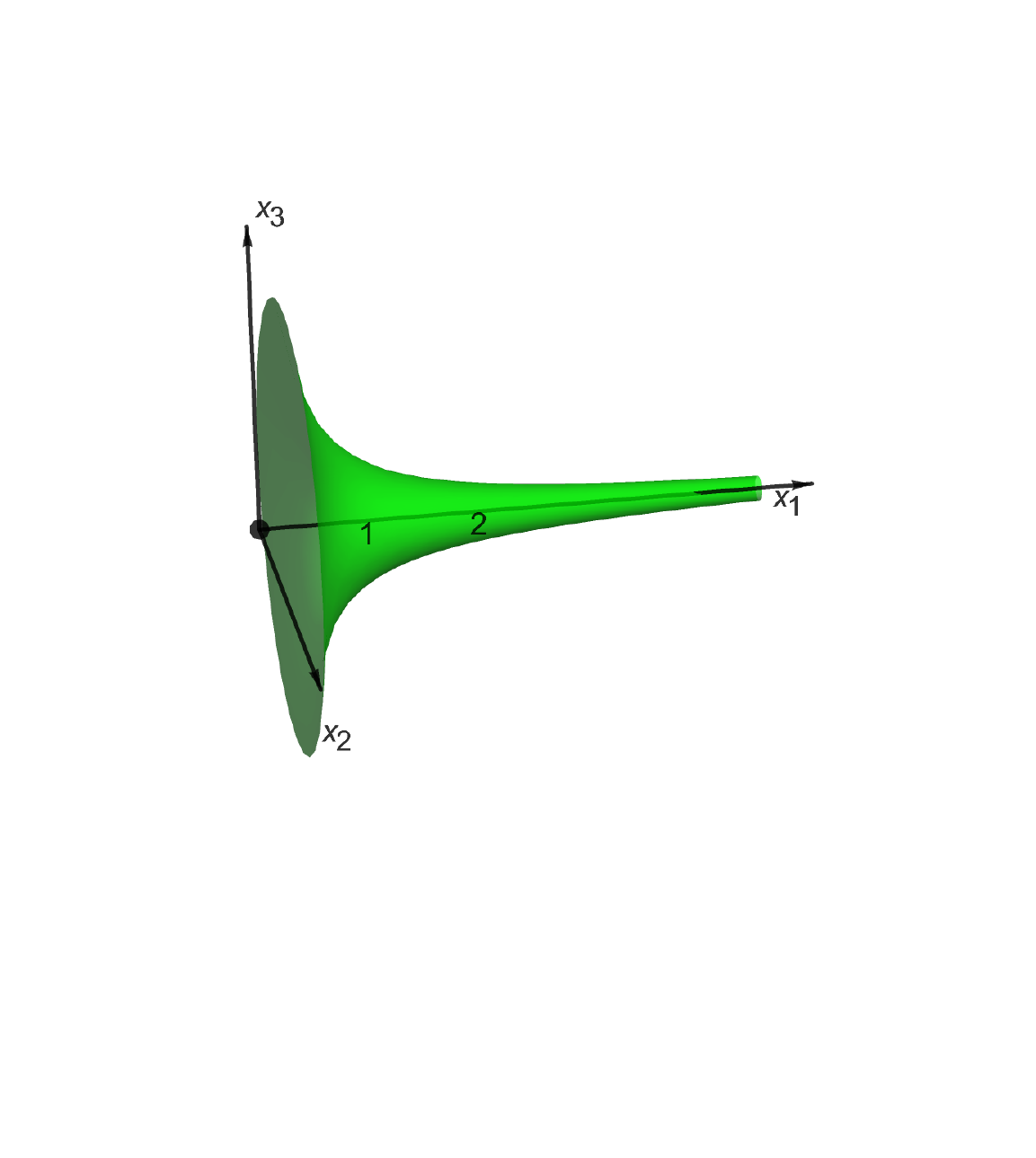}
\vspace{-1.6in}
\caption{The set $F_1$ in Example~\ref{ex} with $\varrho_1(x_1)=1/x_1$.\vspace{-.1in}}
\label{Fig2}
\end{center}
\end{figure}

\begin{figure}[htbp]
\begin{center}
\vspace{-.4in}
\hspace{-1.1in}\includegraphics[width=4.2in]{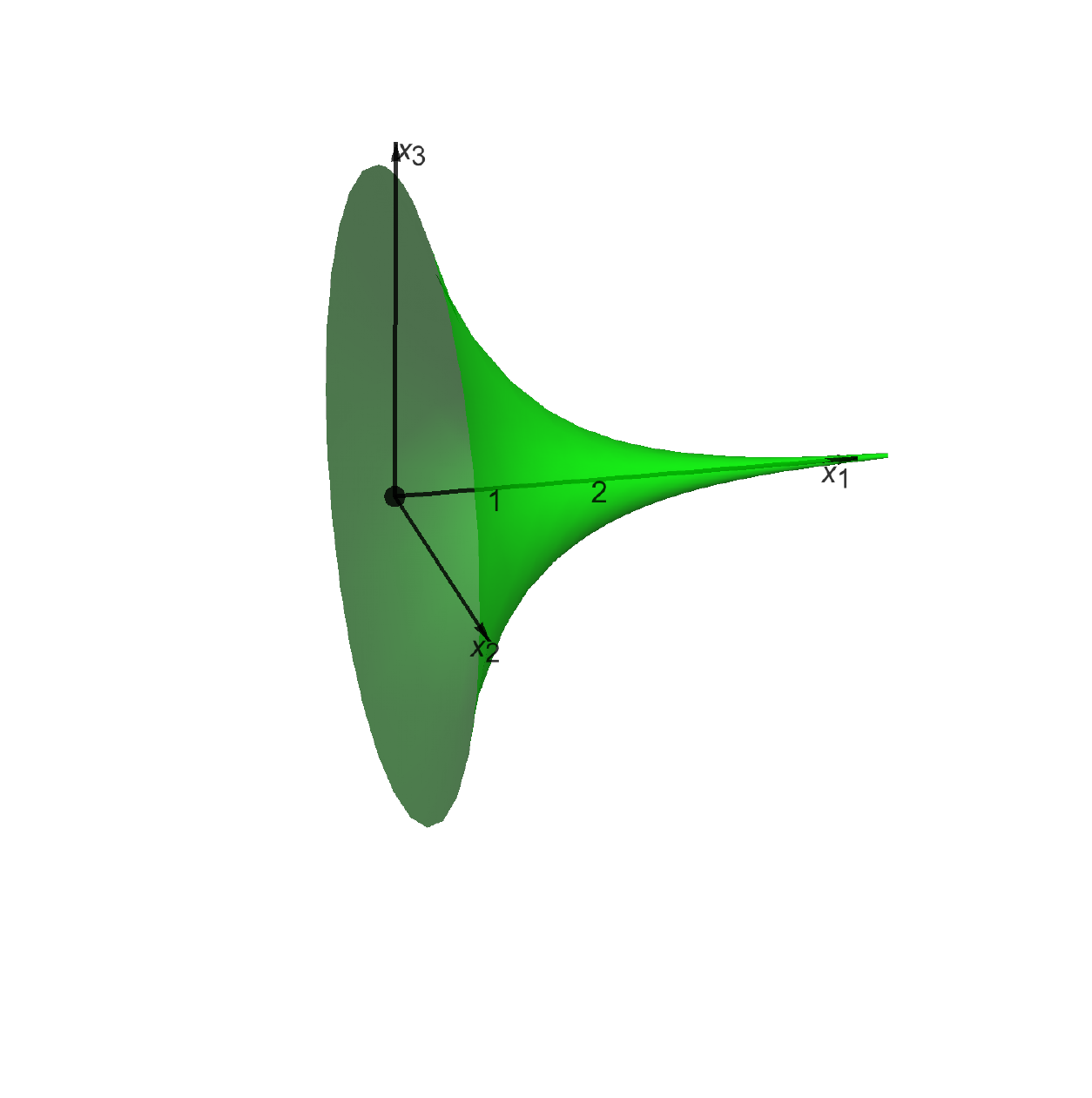}
\vspace{-.8in}
\caption{The set $F_2$ in Example~\ref{ex} with $\varrho_2(x_1)=\exp(-x_1)$.\vspace{-.1in}}
\label{Fig1}
\end{center}
\end{figure}

The rest of the paper is organized as follows. For the convenience of the reader, in Section~\ref{sec-1} we review some basic facts of the theory of potentials on locally compact spaces, the main emphasis being placed on the theory of inner capacitary measures and that of inner balayage. Section~\ref{sec-ext} provides preliminary results on Problem~\ref{pr-main}, and Section~\ref{sec-proofs} gives proofs to Theorems~\ref{th2'}, \ref{th4'}, \ref{th3'}, \ref{riesz} and Corollaries~\ref{cortm}, \ref{th5'}.

\section{On the theory of potentials on locally compact spaces}\label{sec-1}

Throughout the rest of the paper, we shall use the notations and conventions introduced in Section~\ref{sec-intr}.

For the theory of measures and integration on a locally compact (Hausdorff) space $X$, we refer to N.~Bourbaki \cite{B2}.
 A comprehensive application of this theory to the theory of potentials on $X$ with respect to function kernels satisfying the principle of consistency, is presented in the pioneering paper by Fuglede \cite{F1}.

\begin{lemma}[{\rm\cite[Section~IV.1, Proposition~4]{B2}}]\label{lemma-semi}For any l.s.c.\  function\/ $g:X\to[0,\infty]$, the mapping\/ $\mu\mapsto\int g\,d\mu$ is
vaguely l.s.c.\ on\/ $\mathfrak M^+$, the integral here being understood as an upper integral.\end{lemma}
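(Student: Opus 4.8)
The plan is to express the upper integral of $g$ as a pointwise supremum of vaguely continuous functionals, and then to invoke the fact that a supremum of lower semicontinuous functions is again lower semicontinuous. The first step is the standard description of the upper integral of a positive l.s.c.\ function: since $X$ is locally compact and $g:X\to[0,\infty]$ is l.s.c., $g$ coincides with the pointwise supremum of the upward directed (i.e.\ closed under the operation $\max$) family of its continuous, compactly supported minorants, and hence, for every $\mu\in\mathfrak M^+$,
\begin{equation*}
\int g\,d\mu=\sup\Bigl\{\int\varphi\,d\mu:\ \varphi\in C_0(X),\ 0\leqslant\varphi\leqslant g\Bigr\},
\end{equation*}
the left-hand side being understood as the upper integral. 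This supremum representation is essentially the Bourbaki definition of the integral of a positive l.s.c.\ function, and it is the only place where the local compactness of $X$, together with the positivity and lower semicontinuity of $g$, enters in an essential way.

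For each fixed $\varphi\in C_0(X)$, the functional $\mu\mapsto\int\varphi\,d\mu=\mu(\varphi)$ is vaguely continuous on $\mathfrak M^+$ by the very definition of the vague topology as the topology of pointwise convergence on $C_0(X)$; in particular it is vaguely l.s.c. Consequently, the map $\mu\mapsto\int g\,d\mu$ is the pointwise supremum, taken over all admissible $\varphi$, of the vaguely continuous maps $\mu\mapsto\mu(\varphi)$. Since for any family $(h_i)_{i}$ of l.s.c.\ functions and any $a\in\mathbb R$ the set $\{\sup_i h_i>a\}=\bigcup_i\{h_i>a\}$ is open, an arbitrary pointwise supremum of l.s.c.\ functions is again l.s.c. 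Therefore $\mu\mapsto\int g\,d\mu$ is vaguely l.s.c.\ on $\mathfrak M^+$, as claimed.

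The crux --- indeed the only genuinely nontrivial point --- is the supremum representation of the first step; once it is granted, the semicontinuity follows by a soft, purely topological argument. I would therefore expect the main effort to lie in justifying that a nonnegative l.s.c.\ function on a locally compact space is the pointwise supremum of its continuous compactly supported minorants, and that the upper integral is computed as the (monotone) limit along this directed family, both of which are furnished by the theory of upper integrals in Bourbaki.
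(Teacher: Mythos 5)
Your proof is correct and coincides with the argument behind the cited source: the paper itself gives no proof of this lemma, quoting it directly from Bourbaki (Integration, Chapter~IV, Section~1, Proposition~4), and Bourbaki's proof rests on exactly the ingredients you use --- the supremum representation of the upper integral of a positive l.s.c.\ function over its $C_0(X)$-minorants, the vague continuity of $\mu\mapsto\mu(\varphi)$, and the stability of lower semicontinuity under pointwise suprema. Nothing is missing.
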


For an arbitrary set $A\subset X$, denote by $\mathfrak M^+(A)$ the cone of all $\mu\in\mathfrak M^+$ {\it concentrated on\/}
$A$, which means that $A^c:=X\setminus A$ is locally $\mu$-neg\-lig\-ible, or equivalently that $A$ is $\mu$-meas\-ur\-able and $\mu=\mu|_A$, $\mu|_A:=1_A\cdot\mu$ being the {\it trace\/} of $\mu$ to $A$ \cite[Section~IV.14.7]{E2}. (Note that for $\mu\in\mathfrak M^+(A)$, the indicator function $1_A$ of $A$ is locally $\mu$-int\-egr\-able.) The total mass of $\mu\in\mathfrak M^+(A)$ is $\mu(X)=\mu_*(A)$, $\mu_*(A)$ and $\mu^*(A)$ denoting the {\it inner\/} and the {\it outer\/} measure of $A$, respectively. If moreover $A$ is closed, or if $A^c$ is contained in a countable union of sets $Q_j$ with $\mu^*(Q_j)<\infty$,\footnote{If the latter holds, $A^c$ is said to be $\mu$-$\sigma$-{\it finite\/} \cite[Section~IV.7.3]{E2}. This in particular occurs if the measure $\mu$ is {\it bounded\/} (that is, with $\mu(X)<\infty$), or if the locally compact space $X$ is $\sigma$-com\-pact.} then for any $\mu\in\mathfrak M^+(A)$, $A^c$ is $\mu$-neg\-lig\-ible, that is, $\mu^*(A^c)=0$. In particular, if $A$ is closed, $\mathfrak M^+(A)$ consists of all $\mu\in\mathfrak M^+$ with support $S(\mu)\subset A$, cf.\ \cite[Section~IV.2.2]{B2}.

\begin{lemma}[{\rm cf.\ \cite[Lemma~1.2.2]{F1}}]\label{l-lower}For any l.s.c.\ function\/ $g:X\to[0,\infty]$, any measure\/ $\mu\in\mathfrak M^+$, and any\/ $\mu$-measurable set\/ $A\subset X$,
\[\int g\,d\mu|_A=\lim_{K\uparrow A}\,\int g\,d\mu|_K.\]
\end{lemma}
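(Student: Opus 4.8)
The plan is to establish the two inequalities between $\int g\,d\mu|_A$ and the net limit separately; the monotone direction is immediate, while the reverse one rests on approximating $\mu|_A$ \emph{vaguely} by the traces $\mu|_K$ and then invoking the vague lower semicontinuity of Lemma~\ref{lemma-semi}. For the easy direction, I would first note that $\mu|_K=(\mu|_A)|_K$ whenever $K\subset A$, since $1_K=1_K1_A$ on $X$. If $K_1\subset K_2$ are compact subsets of $A$, then $1_{K_1}\leqslant1_{K_2}$ gives $\mu|_{K_1}\leqslant\mu|_{K_2}\leqslant\mu|_A$, and as $g\geqslant0$ the upper integral is monotone with respect to the measure. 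Hence $\bigl(\int g\,d\mu|_K\bigr)_{K\in\mathfrak C_A}$ is an increasing net bounded above by $\int g\,d\mu|_A$; it therefore converges, its limit equals $\sup_{K\in\mathfrak C_A}\int g\,d\mu|_K$, and consequently
\[\lim_{K\uparrow A}\int g\,d\mu|_K\leqslant\int g\,d\mu|_A.\]

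For the reverse inequality I would show that $\mu|_K\to\mu|_A$ vaguely as $K\uparrow A$. To this end, fix $\varphi\in C_0(X)$; splitting $\varphi$ into its positive and negative parts reduces matters to $\varphi\geqslant0$. Writing $L:=S(\varphi)$ for the compact support of $\varphi$, one has $\int\varphi\,d\mu|_A=\int_{A\cap L}\varphi\,d\mu$ and $\int\varphi\,d\mu|_K=\int_{K}\varphi\,d\mu$. Since $\mu$ is Radon, $\mu(L)<\infty$, so $A\cap L$ is a $\mu$-measurable set of finite measure; by the inner regularity of $\mu$ there is, for each $\varepsilon>0$, a compact $K_\varepsilon\subset A\cap L\subset A$ with $\mu\bigl((A\cap L)\setminus K_\varepsilon\bigr)<\varepsilon$. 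Then for every compact $K$ with $K_\varepsilon\subset K\subset A$,
\[0\leqslant\int\varphi\,d\mu|_A-\int\varphi\,d\mu|_K=\int_{(A\cap L)\setminus K}\varphi\,d\mu\leqslant\|\varphi\|_\infty\,\varepsilon,\]
whence $\int\varphi\,d\mu|_K\to\int\varphi\,d\mu|_A$. This is exactly the vague convergence $\mu|_K\to\mu|_A$.

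Finally, Lemma~\ref{lemma-semi} applied along this net yields $\int g\,d\mu|_A\leqslant\liminf_{K\uparrow A}\int g\,d\mu|_K=\lim_{K\uparrow A}\int g\,d\mu|_K$, which together with the monotone direction forces equality. The one point requiring care — and the main obstacle — is that $A$ need be neither of finite measure nor $\sigma$-com\-pact, so $\mu|_A$ cannot be approximated by the $\mu|_K$ globally in any naive sense; the remedy is that the vague topology only tests against functions of compact support, which confines the estimate to the finite-measure set $A\cap L$, where the inner regularity of the Radon measure $\mu$ applies without restriction.
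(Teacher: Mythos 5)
Your proof is correct. Note, however, that the paper itself offers no proof of this lemma at all: it is quoted from Fuglede \cite[Lemma~1.2.2]{F1}, so any complete argument is necessarily "different" from the paper's. Your two ingredients are sound: the monotone upper bound follows from $\mu|_{K_1}\leqslant\mu|_{K_2}\leqslant\mu|_A$ and the formula $\int^*g\,d\nu=\sup\{\nu(h):h\in C_0^+,\ h\leqslant g\}$ for l.s.c.\ $g\geqslant0$; and the reverse inequality follows once you know $\mu|_K\to\mu|_A$ vaguely as $K\uparrow A$, which your inner-regularity estimate on the integrable set $A\cap L$ (finite measure because $L$ is compact and $\mu$ is Radon) establishes correctly, after which Lemma~\ref{lemma-semi} applies along the net. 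The classical proof (Fuglede's, and Bourbaki's behind it) is organized differently but rests on the same foundation: one writes $\int g\,d\mu|_A=\sup_{h\leqslant g}\int h\,d\mu|_A$ with $h\in C_0^+$, uses inner regularity to get $\int h\,d\mu|_A=\sup_{K\in\mathfrak C_A}\int h\,d\mu|_K$ for each fixed $h$, and then interchanges the two suprema. Your version repackages the interchange of suprema as "vague convergence of traces plus vague lower semicontinuity of $\nu\mapsto\int g\,d\nu$", which has the advantage of reusing Lemma~\ref{lemma-semi} as a black box and of isolating clearly the one genuinely measure-theoretic input (inner regularity); the classical version is more elementary in that it never needs the notion of vague convergence of nets of measures. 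Your closing remark is also the right one: since $A$ need not be $\sigma$-compact or of finite measure, the approximation must be localized to $A\cap L$, and it is precisely the compact-support testing of the vague topology (equivalently, the restriction to minorants $h\in C_0^+$) that makes this localization harmless.
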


A kernel on $X$ is thought of as a symmetric function $\kappa\in\Phi(X\times X)$ (Section~\ref{sec-intr}); thus, either $\kappa(x,y)$ is ${}\geqslant0$ for all $(x,y)\in X\times X$, or the space $X$ is compact.

\begin{remark}\label{fIII}
The case of compact $X$ can mostly be reduced to that of $\kappa\geqslant0$ by replacing the kernel $\kappa$ by $\kappa':=\kappa+q\geqslant0$, where $q\in[0,\infty)$, which is always possible since a lower semicontinuous function on a compact space is lower bounded.\end{remark}

\begin{remark}\label{fIII'}For similar reasons, Lemmas~\ref{lemma-semi} and \ref{l-lower} actually hold true for {\it any\/} $g\in\Phi(X)$ --- even if $g\ngeqslant0$. Indeed, then $X$ must be compact, and hence $g$ can be replaced by $g':=g+q\geqslant0$, where $q\in[0,\infty)$. For Lemma~\ref{lemma-semi}, use the vague continuity of the mapping $\mu\mapsto\mu(X)$ on $\mathfrak M^+$, the space $X$ being compact, while for Lemma~\ref{l-lower}, use the fact that for any $\mu\in\mathfrak M^+$ and any $\mu$-measurable set $A\subset X$,
\[\lim_{K\uparrow A}\,\mu|_K(X)=\mu|_A(X)<\infty.\]
\end{remark}

Given (signed) $\mu,\nu\in\mathfrak M$, define the {\it potential\/} and the {\it mutual energy\/}  by
\begin{gather*}U^\mu(x):=\int\kappa(x,y)\,d\mu(y),\quad x\in X,\\
I(\mu,\nu):=\int\kappa(x,y)\,d(\mu\otimes\nu)(x,y),
\end{gather*}
respectively, provided the right-hand side is well defined as a finite number or $\pm\infty$ (for more details see \cite[Section~2.1]{F1}). For $\mu=\nu$, $I(\mu,\nu)$ defines the {\it energy\/}
$I(\mu):=I(\mu,\mu)$ of $\mu\in\mathfrak M$.
In particular, if $\mu,\nu\geqslant0$, then $U^\mu(x)$, resp.\ $I(\mu,\nu)$, is always well defined and represents a l.s.c.\ function of $(x,\mu)\in X\times\mathfrak M^+$, resp.\ of $(\mu,\nu)\in\mathfrak M^+\times\mathfrak M^+$ (the {\it principle of descent\/} \cite[Lemma~2.2.1]{F1}, cf.\ Lemma~\ref{lemma-semi} and Remark~\ref{fIII'}). Thus
\begin{equation}\label{phi}
 U^\mu\in\Phi(X)\text{ \ for all $\mu\in\mathfrak M^+$}.
\end{equation}

\begin{lemma}\label{222} For any\/ $A\subset X$ and any\/ $\mu\in\mathfrak M^+(A)$,
\begin{equation*}I(\mu)=\lim_{K\uparrow A}\,I(\mu|_K).\end{equation*}
\end{lemma}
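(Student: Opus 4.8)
The plan is to reduce at once to the case of a \emph{positive} kernel: either $\kappa\geqslant0$ by hypothesis, or $X$ is compact and we may replace $\kappa$ by $\kappa':=\kappa+q\geqslant0$ as in Remark~\ref{fIII}. Assuming $\kappa\geqslant0$, the net $(\mu|_K)_{K\in\mathfrak C_A}$ is increasing, hence so is $\bigl(I(\mu|_K)\bigr)_{K\in\mathfrak C_A}$; its limit along the upward directed set $\mathfrak C_A$ therefore exists in $[0,\infty]$ and equals $L:=\sup_{K\in\mathfrak C_A}I(\mu|_K)$. It thus remains to prove the two inequalities $L\leqslant I(\mu)$ and $I(\mu)\leqslant L$. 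The first is immediate from monotonicity: since $\mu|_K\leqslant\mu$ and $\kappa\geqslant0$, one has $I(\mu|_K)=\int\kappa\,d(\mu|_K\otimes\mu|_K)\leqslant\int\kappa\,d(\mu\otimes\mu)=I(\mu)$ for every $K$, whence $L\leqslant I(\mu)$ on taking the supremum.

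For the reverse inequality $I(\mu)\leqslant L$ I would first observe that the traces converge vaguely, $\mu|_K\to\mu$ as $K\uparrow A$. Indeed, for $\varphi\in C_0(X)$ both $\varphi^+,\varphi^-$ belong to $\Phi(X)$ and are $\geqslant0$, so Lemma~\ref{l-lower} gives $\int\varphi^\pm\,d\mu|_K\to\int\varphi^\pm\,d\mu|_A=\int\varphi^\pm\,d\mu$, and subtraction yields $\int\varphi\,d\mu|_K\to\int\varphi\,d\mu$. Since the energy $I(\cdot)$ is vaguely lower semicontinuous on $\mathfrak M^+$ (Section~\ref{sec-intr}), this vague convergence forces $I(\mu)\leqslant\liminf_{K\uparrow A}I(\mu|_K)=L$, which closes the case $\kappa\geqslant0$. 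A purely ``internal'' alternative, relying only on the just-proved Lemma~\ref{l-lower}, is available as well: by (\ref{phi}) we have $U^\mu\in\Phi(X)$ with $U^\mu\geqslant0$, so Lemma~\ref{l-lower} gives $I(\mu)=\int U^\mu\,d\mu|_A=\lim_{K\uparrow A}\int U^\mu\,d\mu|_K$; for fixed $K$, Fubini (valid for the symmetric positive kernel) and a second application of Lemma~\ref{l-lower} to $U^{\mu|_K}\in\Phi(X)$ yield $\int U^\mu\,d\mu|_K=\int U^{\mu|_K}\,d\mu|_A=\lim_{L\uparrow A}\int U^{\mu|_K}\,d\mu|_L\leqslant\sup_L I(\mu|_L)=L$, where the bound uses $U^{\mu|_K}\leqslant U^{\mu|_L}$ for $L\supset K$.

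The step I expect to require the most care is the passage \emph{back} to a general kernel in the compact case, where $\kappa$ need not be $\geqslant0$ and the monotonicity of $I(\mu|_K)$ may fail. After working with $\kappa'=\kappa+q$ one has $I'(\mu|_K)=I(\mu|_K)+q\,\mu|_K(X)^2$, and by Remark~\ref{fIII'} the masses satisfy $\mu|_K(X)\to\mu|_A(X)=\mu(X)<\infty$; hence the correction term $q\,\mu|_K(X)^2$ converges to $q\,\mu(X)^2$, and the established identity $\lim_{K\uparrow A}I'(\mu|_K)=I'(\mu)$ transfers to $\lim_{K\uparrow A}I(\mu|_K)=I(\mu)$, whether $I(\mu)$ is finite or equals $+\infty$. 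Everything else is routine: the existence of the limit comes for free from monotonicity, and the two halves of the argument only invoke Lemma~\ref{l-lower}, the positivity/lower semicontinuity recorded in (\ref{phi}) and Section~\ref{sec-intr}, and the elementary inequalities between traces.
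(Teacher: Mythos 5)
Your proof is correct, but it follows a genuinely different route from the paper's. The paper disposes of Lemma~\ref{222} in one line: since $\mu$ is concentrated on $A$, the product measure $\mu\otimes\mu$ is concentrated on $A\times A$, and Lemma~\ref{l-lower} applied \emph{on the product space} $X\times X$ to the l.s.c.\ function $\kappa$ --- combined with the trace identity $(\mu\otimes\mu)|_{Q\times Q}=\mu|_Q\otimes\mu|_Q$ (Fuglede's Lemma~1.2.5) and the fact that the sets $K\times K$, $K\in\mathfrak C_A$, are cofinal among the compact subsets of $A\times A$ --- yields $I(\mu)=\lim_{K\uparrow A}I(\mu|_K)$ at once, with Remark~\ref{fIII'} covering the compact-$X$ case exactly as you do. Your argument instead stays on $X$: you split the identity into two inequalities, obtain $\sup_K I(\mu|_K)\leqslant I(\mu)$ from monotonicity, and obtain the reverse either from the vague convergence $\mu|_K\to\mu$ together with the vague lower semicontinuity of the energy (the principle of descent, which the paper records in Sections~\ref{sec-intr} and \ref{sec-1}), or from your ``internal'' variant via Fubini and a double application of Lemma~\ref{l-lower}. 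Both variants are legitimate with the tools on hand. What the paper's route buys is economy and uniformity (no splitting into inequalities, no appeal to the descent principle), at the cost of invoking the product-trace identity; your main variant avoids the product-space machinery but leans on the heavier descent principle, while your explicit handling of compact $X$ (transferring between $\kappa$ and $\kappa+q$ through the mass term $q\,\mu|_K(X)^2$) carefully spells out what the paper compresses into ``cf.\ Remark~\ref{fIII'}''. One cosmetic point: in the internal alternative you use the letter $L$ both for the supremum $\sup_K I(\mu|_K)$ and for the exhausting compact sets $L\uparrow A$; rename one of them to avoid confusion.
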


\begin{proof}This is seen from Lemma~\ref{l-lower} (cf.\ Remark~\ref{fIII'}) in view of the fact that for any $\mu$-measurable set $Q\subset X$, $(\mu\otimes\mu)|_{Q\times Q}=\mu|_Q\otimes\mu|_Q$, cf.\ \cite[Lemma~1.2.5]{F1}.
\end{proof}

In what follows, we always assume a kernel $\kappa$ to satisfy {\it the energy principle\/} (or equivalently, to be {\it strictly positive definite\/}), which means that $I(\mu)\geqslant0$ for all (signed) $\mu\in\mathfrak M$ whenever $I(\mu)$ is well defined, and moreover $I(\mu)=0$ only for zero measure. Then all $\mu\in\mathfrak M$ with $I(\mu)<\infty$ form a pre-Hil\-bert space $\mathcal E$ with the inner product $\langle\mu,\nu\rangle:=I(\mu,\nu)$ and the energy norm $\|\mu\|:=\sqrt{I(\mu)}$ \cite[Section~3.1]{F1}. The topology on $\mathcal E$ determined by this norm is said to be {\it strong}.

\subsection{Capacities of a set}\label{sec-c} For any $A\subset X$, denote
\begin{gather}\breve{\mathfrak M}^+(A):=\bigl\{\mu\in\mathfrak M^+(A): \ \mu(X)=1\bigr\},\notag\\
\mathcal E^+(A):=\mathcal E\cap\mathfrak M^+(A),\quad
\breve{\mathcal E}^+(A):=\mathcal E\cap\breve{\mathfrak M}^+(A),\notag\\
w(A):=\inf_{\mu\in\breve{\mathcal E}^+(A)}\,I(\mu)\in[0,\infty].\label{W}\end{gather}
The value \[{\rm cap}_*(A):=1/w(A)\in[0,\infty]\] is said to be the (Wiener) {\it inner capacity\/} of the set $A$ (with respect to the kernel $\kappa$).

It is seen from Lemma~\ref{222} that ${\rm cap}_*(A)$ would be the same if the admissible measures in (\ref{W})
were required to be of compact support. This in turn yields\footnote{We write ${\rm cap}(A)$ in place of ${\rm cap}_*(A)$ if $A$ is {\it capacitable}, that is, if ${\rm cap}_*(A)={\rm cap}^*(A)$, where ${\rm cap}^*(A)$ is the {\it outer\/} capacity of $A$, defined as $\inf\,{\rm cap}_*(D)$, $D$ ranging over all open sets containing $A$. This occurs, for instance, if $A$ is compact \cite[Lemma~2.3.4]{F1} or open.}
\begin{equation}\label{153}{\rm cap}(K)\uparrow{\rm cap}_*(A)\text{ \ as $K\uparrow A$}.\end{equation}

\begin{lemma}[{\rm cf.\ \cite[Lemma~2.3.1]{F1}}]\label{l-negl1} For any\/ $A\subset X$, \[{\rm cap}_*(A)=0\iff\mathcal E^+(A)=\{0\}\iff\mathcal E^+(K)=\{0\}\text{ \ for all\/ $K\in\mathfrak C_A$}.\]
\end{lemma}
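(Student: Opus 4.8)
The plan is to split the chain into two equivalences that can be handled independently: first ${\rm cap}_*(A)=0\iff\mathcal E^+(K)=\{0\}$ for every $K\in\mathfrak C_A$, established through the compact case together with the capacity-exhaustion relation (\ref{153}); and then $\mathcal E^+(K)=\{0\}$ for all $K\in\mathfrak C_A\iff\mathcal E^+(A)=\{0\}$, established through the energy-exhaustion relation of Lemma~\ref{222}. Combined, these deliver the full triple equivalence.

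For the first equivalence I would begin with a compact $K$, where the argument is clean because every $\nu\in\mathfrak M^+(K)$ has finite total mass $\nu(X)=\nu(K)$ ($\nu$ being a Radon measure and $K$ compact). If $\mathcal E^+(K)\neq\{0\}$, pick a nonzero $\nu\in\mathcal E^+(K)$; then $\nu(K)\in(0,\infty)$, so $\nu/\nu(K)\in\breve{\mathcal E}^+(K)$ and hence $w(K)\leqslant I(\nu)/\nu(K)^2<\infty$, giving ${\rm cap}(K)>0$. Conversely ${\rm cap}(K)>0$ forces $w(K)<\infty$, so $\breve{\mathcal E}^+(K)\neq\varnothing$, and any of its (unit-mass, hence nonzero) members shows $\mathcal E^+(K)\neq\{0\}$. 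Thus ${\rm cap}(K)=0\iff\mathcal E^+(K)=\{0\}$ for each compact $K$. Passing to a general $A$, relation (\ref{153}) identifies ${\rm cap}_*(A)$ with $\sup_{K\in\mathfrak C_A}{\rm cap}(K)$, and a supremum of nonnegative numbers vanishes precisely when every term does; hence ${\rm cap}_*(A)=0\iff{\rm cap}(K)=0$ for all $K\in\mathfrak C_A\iff\mathcal E^+(K)=\{0\}$ for all $K\in\mathfrak C_A$.

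For the second equivalence, the forward implication is immediate: if $K\subset A$ then concentration on $K$ entails concentration on $A$, so $\mathcal E^+(K)\subset\mathcal E^+(A)=\{0\}$. For the reverse implication I would take an arbitrary $\mu\in\mathcal E^+(A)$ and restrict it to compact subsets: for each $K\in\mathfrak C_A$ the trace $\mu|_K$ is concentrated on $K$ and satisfies $I(\mu|_K)\leqslant I(\mu)<\infty$ (the monotonicity of $K\mapsto I(\mu|_K)$ being inherent in the increasing limit of Lemma~\ref{222}), so $\mu|_K\in\mathcal E^+(K)=\{0\}$ and therefore $\mu|_K=0$. Lemma~\ref{222} then yields $I(\mu)=\lim_{K\uparrow A}I(\mu|_K)=0$, and the energy principle (strict positive definiteness of $\kappa$) forces $\mu=0$, whence $\mathcal E^+(A)=\{0\}$.

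I expect the only genuinely substantive step to be this last reverse implication, where the energy principle is indispensable in turning $I(\mu)=0$ into $\mu=0$, and where one must check that each trace $\mu|_K$ really lies in $\mathcal E^+(K)$ (i.e.\ has finite energy). Routing the first equivalence through compact sets via (\ref{153}) is a deliberate choice: it confines the normalization argument to measures of finite total mass and thereby sidesteps the difficulty that a finite-energy measure on a noncompact $A$ may carry infinite mass and so cannot be normalized directly. The remaining steps are routine bookkeeping with the conventions $1/0=+\infty$, $1/(+\infty)=0$ and with the hereditary nature of concentration on subsets.
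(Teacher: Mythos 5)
Your argument is correct, but note that the paper offers no proof of this lemma to compare against: it is simply quoted from Fuglede \cite[Lemma~2.3.1]{F1}. Your proposal is therefore a self-contained reconstruction from material already available in Section~\ref{sec-1}, and its architecture is sound: the normalization trick on compact sets (where finite total mass is automatic for Radon measures), the identification ${\rm cap}_*(A)=\sup_{K\in\mathfrak C_A}{\rm cap}(K)$ coming from (\ref{153}), and the trace-plus-exhaustion argument via Lemma~\ref{222} and the energy principle together yield the full chain of equivalences. Your closing observation --- that routing the normalization through compact subsets sidesteps measures of infinite total mass on noncompact $A$ --- is exactly the right reason why the compact-set clause is the convenient pivot of the equivalence.

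One justification needs repair, though it does not affect the conclusion. You assert $I(\mu|_K)\leqslant I(\mu)$ on the grounds that the limit in Lemma~\ref{222} is increasing; but Lemma~\ref{222} asserts only convergence, and monotonicity of $K\mapsto I(\mu|_K)$ can genuinely fail when $X$ is compact and $\kappa$ takes negative values, which the paper's definition of a kernel permits (cf.\ Remark~\ref{fIII}). When $\kappa\geqslant0$ (in particular, whenever $X$ is noncompact) the inequality is immediate, $I(\mu|_K)$ being the integral of $\kappa\geqslant0$ over $K\times K$ against $\mu\otimes\mu$. In the compact case, replace $\kappa$ by $\kappa':=\kappa+q\geqslant0$ with $q\in[0,\infty)$ and use $\mu(X)<\infty$: writing $I'$ for the energy with respect to $\kappa'$, one has $I(\mu|_K)\leqslant I'(\mu|_K)\leqslant I'(\mu)=I(\mu)+q\,\mu(X)^2<\infty$, which is all that is needed to place $\mu|_K$ in $\mathcal E^+(K)$. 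With that fix the proof is complete. As a side remark, once $\mu|_K=0$ for every $K\in\mathfrak C_A$ you could also finish without Lemma~\ref{222} or the energy principle: since $\mu$ is concentrated on $A$, the paper's identity $\mu(X)=\mu_*(A)$ together with the definition of inner measure gives $\mu(X)=\sup_{K\in\mathfrak C_A}\mu(K)=0$, whence $\mu=0$ directly.
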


\begin{lemma}\label{l-negl}Given\/ $\mu\in\mathcal E^+$, let\/ $A\subset X$ be a\/ $\mu$-mea\-sur\-ab\-le and\/ $\mu$-$\sigma$-fi\-ni\-te set with\/ ${\rm cap}_*(A)=0$. Then\/ $A$ is\/ $\mu$-neg\-lig\-ible.\end{lemma}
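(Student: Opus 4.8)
The plan is to pass to the trace $\nu:=\mu|_A=1_A\cdot\mu$, to show that $\nu\in\mathcal E^+(A)$, and then to invoke Lemma~\ref{l-negl1}: since ${\rm cap}_*(A)=0$, we will have $\mathcal E^+(A)=\{0\}$ and hence $\nu=0$. The hypothesis that $A$ be $\mu$-$\sigma$-finite will be used precisely at the end, to upgrade the resulting \emph{local} negligibility of $A$ to genuine negligibility.

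First I would observe that $\nu\in\mathfrak M^+$, because $A$ being $\mu$-measurable makes $1_A$ locally $\mu$-integrable, and that $\nu$ is concentrated on $A$, since $1_{A^c}\cdot\nu=1_{A^c}1_A\cdot\mu=0$. To see that $\nu\in\mathcal E^+$, i.e.\ $I(\nu)<\infty$, I would combine the identity $(\mu|_A)\otimes(\mu|_A)=(\mu\otimes\mu)|_{A\times A}$ (cf.\ \cite[Lemma~1.2.5]{F1}) with $\kappa\geqslant0$: monotonicity of the integral of a positive l.s.c.\ function then gives
\[I(\nu)=\int\kappa\,d\bigl((\mu|_A)\otimes(\mu|_A)\bigr)\leqslant\int\kappa\,d(\mu\otimes\mu)=I(\mu)<\infty,\]
the case of compact $X$ (where $\kappa$ need not be positive) being reduced to this one by Remark~\ref{fIII}. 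Thus $\nu\in\mathcal E^+(A)$, and Lemma~\ref{l-negl1} forces $\nu=\mu|_A=0$.

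It remains to deduce $\mu^*(A)=0$. The vanishing of the trace $\mu|_A$ means, by the description of traces recalled after Lemma~\ref{lemma-semi}, that $\mu$ is concentrated on $A^c$, so that $\mu\in\mathfrak M^+(A^c)$; equivalently, $A$ is locally $\mu$-negligible. Since $A=(A^c)^c$ is $\mu$-$\sigma$-finite by hypothesis, the implication recorded in Section~\ref{sec-1}---a locally $\mu$-negligible, $\mu$-$\sigma$-finite set is $\mu$-negligible, there stated for the complement of a set carrying the measure---yields $\mu^*(A)=0$, as desired. The only delicate point is this last step: local negligibility gives merely $\mu_*(A)=0$, and it is the $\mu$-$\sigma$-finiteness of $A$, rather than its $\mu$-measurability alone, that bridges the gap to $\mu^*(A)=0$; one must therefore be careful not to conflate the inner and outer measures of $A$ before this hypothesis has been brought to bear.
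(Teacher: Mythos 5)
Your proof is correct, but it runs in a different order from the paper's, and the difference is instructive. The paper spends its hypotheses at the very start: since $A$ is $\mu$-measurable and $\mu$-$\sigma$-finite, inner regularity reduces $\mu^*(A)=0$ to showing $\mu(K)=0$ for every compact $K\subset A$, and for such $K$ this is immediate from Lemma~\ref{l-negl1}, because $\mu|_K$ trivially has finite energy (no product-trace identity is needed on a compact piece). You instead apply Lemma~\ref{l-negl1} to $A$ itself, which forces you to verify $\mu|_A\in\mathcal E^+(A)$ for a possibly nonclosed, noncompact $A$; that step is legitimate, but it requires the identity $(\mu|_A)\otimes(\mu|_A)=(\mu\otimes\mu)|_{A\times A}$ of \cite[Lemma~1.2.5]{F1} together with the positivity of $\kappa$ (and the compact-$X$ reduction of Remark~\ref{fIII}) --- exactly the ingredients the paper invokes elsewhere, in the proof of Lemma~\ref{222}, but avoids here. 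You then defer the $\sigma$-finiteness to the end, using it to upgrade the local $\mu$-negligibility of $A$ (equivalently $\mu_*(A)=0$) to genuine negligibility $\mu^*(A)=0$; your closing caution about not conflating inner and outer measure before that point is well placed, since that is precisely the gap the $\sigma$-finiteness hypothesis exists to close. In short: the paper's route is more economical because the energy-finiteness of a trace is free on compact sets, while yours makes the role of each hypothesis more visible --- measurability enters to form the trace, and $\sigma$-finiteness enters only in the final local-to-global step.
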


\begin{proof} As $A$ is $\mu$-mea\-sur\-ab\-le and $\mu$-$\sigma$-fi\-ni\-te, $\mu^*(A)=0$  will follow once we show that $\mu(K)=0$ for every compact $K\subset A$, which however is obvious by Lemma~\ref{l-negl1}.
\end{proof}

A proposition $\mathcal P$ involving a variable point $x\in X$ is said to hold {\it nearly everywhere\/} ({\it n.e.\/}) on $A\subset X$ if ${\rm cap}_*(E)=0$, $E$ being the set of all $x\in A$ where $\mathcal P(x)$ fails. Replacing here ${\rm cap}_*(E)$ by ${\rm cap}^*(E)$ leads to the concept of {\it qua\-si-ev\-ery\-whe\-re\/} ({\it q.e.\/}).

The potential $U^\mu$ of any $\mu\in\mathcal E$ is (well defined and) finite qua\-si-ev\-ery\-whe\-re on $X$ \cite[Corollary to Lemma~3.2.3]{F1}. Furthermore, for any two given $\mu,\nu\in\mathcal E$,
\begin{equation}\label{E}
\mu=\nu\iff U^\mu=U^\nu\text{ \ n.e.\ on $X$}\iff U^\mu=U^\nu\text{ \ q.e.\ on $X$},
\end{equation}
which follows from \cite[Lemma~3.2.1]{F1} by making use of the energy principle.

In the study of inner potential theoretical concepts, the following strengthened version of countable subadditivity for inner capacity is particularly useful.

\begin{lemma}\label{str}
For arbitrary\/ $A\subset X$ and universally measurable\/ $U_j\subset X$, $j\in\mathbb N$,
\[{\rm cap}_*\Bigl(\bigcup_{j\in\mathbb N}\,A\cap U_j\Bigr)\leqslant\sum_{j\in\mathbb N}\,{\rm cap}_*(A\cap U_j).\]\end{lemma}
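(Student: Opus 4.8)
The plan is to reduce the statement to an estimate for compact subsets and then to exploit the classical Frostman equilibrium measure on a compact set, which is available already under the sole (standing) energy principle, so that neither perfectness nor a maximum principle is needed. Two preliminary reductions set this up. First I would disjointify the $U_j$: replacing them by $V_1:=U_1$ and $V_j:=U_j\setminus\bigcup_{i<j}U_i$, which are again universally measurable, pairwise disjoint, satisfy $\bigcup_j V_j=\bigcup_j U_j$ and $A\cap V_j\subset A\cap U_j$, reduces the claim by monotonicity of ${\rm cap}_*$ to the case of mutually disjoint $U_j$. Writing $E:=\bigcup_j A\cap U_j$ and invoking \eqref{153}, i.e.\ ${\rm cap}(K)\uparrow{\rm cap}_*(E)$ as $K\uparrow E$, it then suffices to prove ${\rm cap}(K)\leqslant\sum_j{\rm cap}_*(A\cap U_j)$ for an arbitrary compact $K\subset E$; the case ${\rm cap}(K)=0$ being trivial, I may assume ${\rm cap}(K)>0$.

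The core of the argument is a decomposition of the equilibrium measure. For compact $K$ with ${\rm cap}(K)>0$, let $\gamma_K\in\mathcal E^+$ be the equilibrium measure, which exists by the vague compactness of the admissible class together with the vague lower semicontinuity of $I$ (cf.\ \cite{F1}), and which satisfies ${\rm cap}(K)=\gamma_K(X)$, $S(\gamma_K)\subset K$, and the Frostman upper estimate $U^{\gamma_K}\leqslant1$ on $S(\gamma_K)$. Since $K\subset E\subset\bigcup_j U_j$, the set $\bigl(\bigcup_j U_j\bigr)^c$ is $\gamma_K$-negligible, and as the $U_j$ are disjoint and $\gamma_K$-measurable I obtain $\gamma_K(X)=\sum_j\gamma_K(U_j)$. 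Setting $\gamma_j:=\gamma_K|_{U_j}$, each $\gamma_j$ is concentrated on $K\cap U_j\subset A\cap U_j$ and has finite energy, since $I(\gamma_j)\leqslant I(\gamma_K)<\infty$.

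It remains to bound each partial mass $\gamma_j(X)$ by ${\rm cap}_*(A\cap U_j)$, which is the one genuinely new point. Here I would use the Frostman estimate: because $0\leqslant\gamma_j\leqslant\gamma_K$ and the kernel is positive (Remark~\ref{fIII}), $U^{\gamma_j}\leqslant U^{\gamma_K}\leqslant1$ holds $\gamma_j$-almost everywhere, whence $I(\gamma_j)=\int U^{\gamma_j}\,d\gamma_j\leqslant\gamma_j(X)$. Assuming $\gamma_j(X)>0$, the normalized measure $\gamma_j/\gamma_j(X)$ lies in $\breve{\mathcal E}^+(A\cap U_j)$, so it is admissible for \eqref{W} and gives $w(A\cap U_j)\leqslant I(\gamma_j)/\gamma_j(X)^2\leqslant1/\gamma_j(X)$, that is, $\gamma_j(X)\leqslant{\rm cap}_*(A\cap U_j)$ (this bound being trivial when $\gamma_j(X)=0$). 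Summing over $j$ yields ${\rm cap}(K)=\sum_j\gamma_j(X)\leqslant\sum_j{\rm cap}_*(A\cap U_j)$, and taking the supremum over $K\uparrow E$ via \eqref{153} completes the proof.

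The main obstacle is choosing the right quantity to estimate. The naive attempt to sum the capacitary measures of the individual pieces and control the total energy fails, since the squared energy norm of a sum is not dominated by the sum of the squared norms. The decisive observation is that inner capacity is recovered as a supremum of total masses of measures whose potential is $\leqslant1$ on their support, so that it is the total mass — which is genuinely additive under the decomposition $\gamma_K=\sum_j\gamma_K|_{U_j}$ — rather than the energy that must be tracked.
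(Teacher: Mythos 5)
Your argument is, in substance, the classical Cartan--Fuglede proof of countable subadditivity written out in full: disjointify, pass to a compact $K\subset E$ via (\ref{153}), split the equilibrium measure $\gamma_K$ along the disjoint pieces $U_j$, and bound each partial mass through the Frostman upper estimate. This is a genuinely different route from the paper, which does not reprove the lemma at all but derives it from \cite{F1} (Lemma~2.3.5 and the remark following it), using that a strictly positive definite kernel is pseudo-positive. Your reductions are correct (in particular $E\cap U_j=A\cap U_j$ after disjointification, so each trace $\gamma_j:=\gamma_K|_{U_j}$ is indeed concentrated on $A\cap U_j$, and the normalization step gives $\gamma_j(X)\leqslant{\rm cap}_*(A\cap U_j)$), and you are right that only the energy principle is needed --- which matters, since the lemma is stated before perfectness becomes a standing hypothesis.

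The gap is the step $U^{\gamma_j}\leqslant U^{\gamma_K}$, which you justify by saying "the kernel is positive (Remark~\ref{fIII})". Pointwise monotonicity of potentials under $\gamma_j\leqslant\gamma_K$ (and likewise your bound $I(\gamma_j)\leqslant I(\gamma_K)$) requires $\kappa\geqslant0$; this is automatic when $X$ is noncompact, but the lemma also covers compact $X$, where $\kappa$ may take negative values, and the reduction described in Remark~\ref{fIII} does not apply to this particular statement. Indeed, replacing $\kappa$ by $\kappa+q$ raises the energy of every probability measure by exactly $q$, hence transforms inner capacity nonlinearly, ${\rm cap}_*\mapsto{\rm cap}_*/(1+q\,{\rm cap}_*)$, and subadditivity for the shifted kernel does not imply subadditivity for $\kappa$: with $q=1$ and two pieces of $\kappa$-capacity $1$, the shifted inequality asserts $c/(1+c)\leqslant1$ for the $\kappa$-capacity $c$ of the union, which is vacuously true for every $c$, whereas the desired conclusion is $c\leqslant2$. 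The repair is precisely the pseudo-positivity that the paper's citation encodes: for a strictly positive definite kernel, mutual energies of positive measures of finite energy are nonnegative \cite[p.~150]{F1}, whence
\[
I(\gamma_j)=I(\gamma_j,\gamma_K)-I(\gamma_j,\gamma_K-\gamma_j)\leqslant I(\gamma_j,\gamma_K)=\int U^{\gamma_K}\,d\gamma_j\leqslant\gamma_j(X)
\]
(all terms here being well defined and finite, $\kappa$ being bounded below on the compact space $X\times X$ and $U^{\gamma_K}\leqslant1$ holding $\gamma_j$-a.e.). This restores your key estimate without assuming $\kappa\geqslant0$; with this substitution your proof is complete in the stated generality.
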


\begin{proof}Noting that a strictly positive definite kernel is pseudo-positive, cf.\ \cite[p.~150]{F1}, we derive the lemma from \cite{F1} (see Lemma~2.3.5 and the remark following it). For the Newtonian kernel $|x-y|^{2-n}$ on $\mathbb R^n$, $n\geqslant3$, this goes back to H.~Cartan \cite[p.~253]{Ca2}.\end{proof}

\subsection{Potential-theoretic principles}\label{subs-pr} The following two maximum principles are often used throughout the paper.
A kernel $\kappa$ is said to satisfy {\it Frostman's maximum principle\/} ({}=\,{\it the first maximum principle\/}) if for any $\mu\in\mathcal E^+$ with $U^\mu\leqslant1$ on $S(\mu)$, the same inequality holds everywhere on $X$;
and it is said to satisfy {\it the domination principle\/} ({}=\,{\it the second maximum principle\/}) if for any $\mu,\nu\in\mathcal E^+$ with $U^\mu\leqslant U^\nu$ $\mu$-a.e., the same inequality is fulfilled on all of $X$.

For classical kernels, {\it the principle of positivity of mass\/} goes back to J.~Deny \cite{D2}. In the generality stated below, it was established in \cite{Z-arx-22} (Theorem~2.1 and Remark~2.1).

\begin{theorem}\label{pr-pos} Assume that a locally compact space\/ $X$ is\/ $\sigma$-compact,
and that a\/ {\rm(}strictly positive definite\/{\rm)} kernel\/ $\kappa$ satisfies the first and second maximum principles.
For any\/ $\mu,\nu\in\mathcal E^+$ with\/
$U^\mu\leqslant U^\nu$ n.e.\ on\/ $X$, we have\/ $\mu(X)\leqslant\nu(X)$.
In the case where\/ $X$ is compact, this remains valid with the second maximum principle dropped.
\end{theorem}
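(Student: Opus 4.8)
The plan is to reduce the global mass inequality to a family of local ones, indexed by the compact subsets of $X$, and then to exploit the capacitary (equilibrium) measures of those compacta together with the two maximum principles. Every positive Radon measure is inner regular, so $\mu(X)=\sup_{K\in\mathfrak C_X}\mu(K)$, and it therefore suffices to establish
\[\mu(K)\leqslant\nu(X)\quad\text{for every compact }K\subset X.\]
If $\mathrm{cap}(K)=0$ this is trivial: then $\mathcal E^+(K)=\{0\}$ by Lemma~\ref{l-negl1}, and since $I(\mu|_K)\leqslant I(\mu)<\infty$ gives $\mu|_K\in\mathcal E^+(K)$, we get $\mu(K)=0$. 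Hence I may assume $0<\mathrm{cap}(K)<\infty$.

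First I would bring in the capacitary measure $\gamma_K$ of $K$ (Fuglede \cite{F1}), which under the energy principle exists and, by Frostman's (first) maximum principle, has a potential obeying $U^{\gamma_K}\leqslant1$ everywhere on $X$ together with $U^{\gamma_K}\geqslant1$ n.e.\ on $K$; recall also that $U^{\gamma_K}\in\Phi(X)$, so $U^{\gamma_K}\geqslant0$ by (\ref{phi}) whenever $X$ is noncompact. The heart of the argument is then the chain
\[\mu(K)\leqslant\int U^{\gamma_K}\,d\mu=\int U^\mu\,d\gamma_K\leqslant\int U^\nu\,d\gamma_K=\int U^{\gamma_K}\,d\nu\leqslant\nu(X),\]
whose two middle equalities are Fubini's theorem applied to the symmetric kernel $\kappa$.

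Each inequality in this chain rests on controlling an exceptional set of inner capacity zero, and this is precisely where $\sigma$-compactness and Lemma~\ref{l-negl} enter. For the leftmost inequality, the set on $K$ where $U^{\gamma_K}<1$ has inner capacity zero, hence is $\mu$-negligible by Lemma~\ref{l-negl} (it is universally measurable and, by $\sigma$-compactness, $\mu$-$\sigma$-finite), so $\int U^{\gamma_K}\,d\mu\geqslant\int_K U^{\gamma_K}\,d\mu=\mu(K)$. For the central inequality, the hypothesis $U^\mu\leqslant U^\nu$ holds off a set of inner capacity zero, likewise $\gamma_K$-negligible by Lemma~\ref{l-negl}, so the inequality survives integration against $\gamma_K$. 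The rightmost step uses only $U^{\gamma_K}\leqslant1$. Taking the supremum over $K\in\mathfrak C_X$ yields $\mu(X)\leqslant\nu(X)$.

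The main obstacle is not the arithmetic but the two-sided bound on the capacitary potential and the negligibility of the capacity-zero exceptional sets. The bound $U^{\gamma_K}\leqslant1$ on all of $X$ is where the maximum principles are genuinely consumed: the variational characterization of $\gamma_K$ delivers $U^{\gamma_K}\leqslant1$ only $\gamma_K$-a.e., and upgrading this first to the whole support and then (via Frostman) to all of $X$ is the delicate passage; for a \emph{noncompact} ambient space this is what forces the domination (second maximum) principle into the hypotheses. When $X$ is itself compact, however, one may simply take $K=X$: no exhaustion is required, $\sigma$-finiteness is automatic, and Frostman's principle alone secures $U^{\gamma_X}\leqslant1$ everywhere, so the domination principle may be dropped — which is exactly the final assertion.
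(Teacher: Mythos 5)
Your proposal cannot be compared with an in-paper argument, because the paper contains no proof of Theorem~\ref{pr-pos}: the result is quoted from \cite{Z-arx-22} (Theorem~2.1 and Remark~2.1). Judged on its own merits, your Deny-type chain is correct, and every step can be justified with the tools the paper provides: for compact $K$ with $0<{\rm cap}(K)<\infty$ the capacitary measure $\gamma_K$ exists and satisfies $U^{\gamma_K}\geqslant1$ n.e.\ on $K$ and $U^{\gamma_K}\leqslant1$ on $S(\gamma_K)$ by vague compactness, the principle of descent, and the standard variational argument, all of which need only the energy principle (Fuglede \cite{F1}); the two n.e.-to-a.e.\ upgrades are legitimate via Lemma~\ref{l-negl}, since the exceptional sets are Borel and $\sigma$-finiteness is automatic (one set lies in the compact $K$, the other is integrated against the bounded measure $\gamma_K$ --- $\sigma$-compactness of $X$ is not even needed for this); Fubini for the symmetric l.s.c.\ kernel and inner regularity of Radon measures do the rest. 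The one genuine criticism concerns your closing paragraph: the claim that the domination principle is ``genuinely consumed'' in upgrading $U^{\gamma_K}\leqslant1$ to all of $X$ is not borne out by your own argument. The passage from the $\gamma_K$-a.e.\ identity $U^{\gamma_K}=1$ to the bound on $S(\gamma_K)$ uses only lower semicontinuity of the potential, and the passage from $S(\gamma_K)$ to all of $X$ is exactly Frostman's (first) principle; nowhere does your chain invoke the second maximum principle, for compact or noncompact $X$ alike. This does not invalidate the proof --- unused hypotheses are harmless, and you have in effect established a formally stronger statement than the one quoted (it is the proof in the cited source \cite{Z-arx-22}, not yours, that actually employs the domination principle) --- but you should either exhibit a real use of that principle or state explicitly that your route does not require it, instead of attributing to it a role it does not play. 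A small slip in the same spirit: $\int_K U^{\gamma_K}\,d\mu=\mu(K)$ should read $\int_K U^{\gamma_K}\,d\mu\geqslant\mu(K)$, which is all the chain needs.
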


Unless explicitly stated otherwise, from now on a (strictly positive definite) kernel $\kappa$ will always be assumed to satisfy {\it the consistency principle}, or equivalently to be {\it perfect}, which means that the cone $\mathcal E^+$ is complete in the induced strong topology, and moreover that the strong topology on $\mathcal E^+$ is finer than the (induced) vague topology on $\mathcal E^+$ \cite[Section~3.3]{F1}. Thus, for a perfect kernel $\kappa$, any strong Cauchy net $(\mu_s)\subset\mathcal E^+$ converges both strongly and vaguely to the same unique limit $\mu_0\in\mathcal E^+$, the strong and the vague topologies on $\mathcal E^+$ being Hausdorff.\footnote{Since the space $\mathfrak M$ equipped with the vague topology does not necessarily satisfy the first axiom of countability, the vague convergence cannot in general be described in terms of sequences. We follow Moore and Smith's theory of convergence
\cite{MSm}, based on the concept of {\it nets}. However, if a locally compact space $X$ is sec\-ond-count\-able, then the space $\mathfrak M$ is first-count\-able \cite[Lemma~4.4]{Z-arx}, and the use of nets in $\mathfrak M$ may often be avoided.}

\begin{remark}
As seen from a well-known counterexample by Cartan \cite{Ca1}, the whole pre-Hilbert space $\mathcal E$ is in general strongly incomplete, and this is the case even for the  Newtonian kernel $|x-y|^{2-n}$ on $\mathbb R^n$, $n\geqslant3$, despite the fact that the  Newtonian kernel is perfect (cf.\ also \cite[Theorems~1.18, 1.19]{L}).
\end{remark}

\begin{example}\label{rem:clas} The $\alpha$-Riesz kernel $|x-y|^{\alpha-n}$ of order $\alpha\in(0,2]$, $\alpha<n$, on $\mathbb R^n$, $n\geqslant2$ (thus in particular the Newtonian kernel $|x-y|^{2-n}$ on $\mathbb R^n$, $n\geqslant3$), is perfect, and it satisfies the first and second maximum principles \cite[Theorems~1.10, 1.15, 1.18, 1.27, 1.29]{L}. The same holds true for the associated $\alpha$-Green kernel on an arbitrary open subset of $\mathbb R^n$, $n\geqslant2$ \cite[Theorems~4.6, 4.9, 4.11]{FZ}. The ($2$-)Green kernel on a planar Greenian set is likewise strictly positive definite \cite[Section~I.XIII.7]{Doob} and perfect \cite{E}, and it fulfills the first and the second maximum principles (see \cite[Theorem~5.1.11]{AG} or \cite[Section~I.V.10]{Doob}). The restriction of the logarithmic kernel $-\log\,|x-y|$ to a closed disc in $\mathbb R^2$ of radius ${}<1$ satisfies the energy principle as well as Frostman's maximum principle \cite[Theorems~1.6, 1.16]{L}, and hence it is perfect \cite[Theorem~3.4.2]{F1}. (However, the domination principle then fails in general; it does hold only in a weaker sense where $\mu,\nu$ involved in the ab\-ove-quo\-ted definition
meet the additional requirement that $\nu(\mathbb R^2)\leqslant\mu(\mathbb R^2)$, cf.\ \cite[Theorem~3.2]{ST}.) Thus all the above-mentioned kernels also satisfy the principle of positivity of mass (Theorem~\ref{pr-pos}).
Also note that the $\alpha$-Riesz kernels of order $2<\alpha<n$ on $\mathbb R^n$, $n\geqslant2$, are likewise perfect \cite[Theorems~1.15, 1.18]{L},\footnote{However, these kernels do not satisfy any of the two maximum principles, since the $\alpha$-Riesz potentials $U^\mu$, where $2<\alpha<n$ and $\mu\geqslant0$, are superharmonic on $\mathbb R^n$ (see \cite[Theorems~1.4, 1.5]{L}).} and so are the Deny kernels, defined with the aid of Fourier transformation (see condition $(A)$ in \cite[Section~1]{De2}, cf.\ \cite[Section~VI.1.2]{L}).\end{example}

\subsection{Quasiclosed sets}\label{def-quasi} For arbitrary $A\subset X$, denote by $\mathcal E'(A)$ the closure of the cone $\mathcal E^+(A)$ in the strong topology on $\mathcal E^+$. Being a strongly closed subset of the strongly complete cone $\mathcal E^+$, $\mathcal E'(A)$ is likewise strongly complete. Furthermore,
\begin{equation}\label{belong}
\mathcal E'(A)\subset\mathcal E^+(\overline{A}),
\end{equation}
for $\mathcal E^+(\overline{A})$ is strongly closed (see Theorem~\ref{l-quasi} below). Here $\overline{A}:={\rm Cl}_XA$.

\begin{definition}[{\rm Fuglede \cite{F71}}]
A set $A\subset X$ is said to be {\it quasiclosed\/} if
\begin{equation*}\label{def-q}
\inf\,\bigl\{{\rm cap}^*(A\bigtriangleup F):\ F\text{ closed, }F\subset X\bigr\}=0,
\end{equation*}
$\bigtriangleup$ being the symmetric difference. Replacing here "closed" by "compact", we arrive at the concept of a {\it quasicompact\/} set.
\end{definition}

\begin{theorem}\label{l-quasi}If\/ $A\subset X$ is quasiclosed\/ {\rm(}or in particular quasicompact\/{\rm)}, then the cone\/ $\mathcal E^+(A)$ is strongly closed, and hence
\begin{equation*}
\mathcal E'(A)=\mathcal E^+(A).
\end{equation*}
\end{theorem}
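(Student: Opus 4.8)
The plan is to verify that $\mathcal E^+(A)$ is \emph{sequentially} strongly closed, which is enough because the strong topology on $\mathcal E^+$ is the one determined by the energy norm and is therefore metrizable. So I fix a sequence $(\mu_j)\subset\mathcal E^+(A)$ with $\|\mu_j-\mu_0\|\to0$; by strong completeness of $\mathcal E^+$ one has $\mu_0\in\mathcal E^+$, by perfectness $\mu_j\to\mu_0$ vaguely as well, and $M:=\sup_j\|\mu_j\|<\infty$ (a convergent sequence in a normed space is bounded). The goal is to show that $\mu_0$ is concentrated on $A$. Replacing $\kappa$ by $\kappa+q$ as in Remark~\ref{fIII}, I may assume $\kappa\geqslant0$, so that $I(\mu|_Q)\leqslant I(\mu)$ for every $\mu$-measurable $Q$. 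The key difficulty to keep in view is that $\mathfrak M^+(A)$ is \emph{not} vaguely closed once $A$ fails to be closed: the mass of $\mu_j$ could a priori escape to $\overline A\setminus A$, so vague lower semicontinuity alone cannot pin $\mu_0$ down to $A$, and the strong convergence must be used quantitatively.

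First I would convert quasiclosedness into an \emph{inner} approximation by closed subsets: there exist closed sets $H_m\subset A$ with ${\rm cap}^*(A\setminus H_m)\leqslant4^{-m}$. Indeed, given $m$, choose a closed $F$ with ${\rm cap}^*(A\bigtriangleup F)$ small and then, by the definition of outer capacity, an open $G\supset A\bigtriangleup F$ with ${\rm cap}(G)\leqslant4^{-m}$; set $H:=F\cap G^c$. Then $H$ is closed, and $H\subset A$ (if $x\in H\setminus A$ then $x\in F\setminus A\subset A\bigtriangleup F\subset G$, contradicting $x\in G^c$), while $A\setminus H\subset G$ (since $A\setminus F\subset A\bigtriangleup F\subset G$). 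Hence ${\rm cap}^*(A\setminus H)\leqslant{\rm cap}(G)\leqslant4^{-m}$, and $H_m:=H$ does the job.

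The analytic heart is the mass estimate: every $\nu\in\mathcal E^+$ concentrated on a set $N$ satisfies
\[\nu(X)\leqslant\|\nu\|\,\sqrt{{\rm cap}^*(N)}.\]
To see this, for each compact $K\subset N$ with $\nu(K)>0$ I normalise $\nu|_K/\nu(K)\in\breve{\mathcal E}^+(K)$ and use the definition~\eqref{W} of $w(K)=1/{\rm cap}(K)$ to get $\nu(K)^2\leqslant I(\nu|_K)\,{\rm cap}(K)\leqslant\|\nu\|^2\,{\rm cap}^*(N)$, invoking $I(\nu|_K)\leqslant I(\nu)$ and the monotonicity ${\rm cap}(K)={\rm cap}^*(K)\leqslant{\rm cap}^*(N)$ (the remaining case ${\rm cap}(K)=0$ forces $\nu(K)=0$ by Lemma~\ref{l-negl1}); taking the supremum over $K$ and recalling $\nu(X)=\nu_*(N)$ gives the claim. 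Applying this to $\nu:=\mu_j|_{A\setminus H_m}$, which is concentrated on $A\setminus H_m$, and using that $\mu_j$ is concentrated on $A$, I obtain
\[\mu_j(H_m^c)=\mu_j(A\setminus H_m)\leqslant\|\mu_j\|\,\sqrt{{\rm cap}^*(A\setminus H_m)}\leqslant M\,2^{-m}\quad\text{for all }j.\]

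Since $H_m^c$ is open, Lemma~\ref{lemma-semi} (vague lower semicontinuity of $\mu\mapsto\mu(H_m^c)$) yields $\mu_0(H_m^c)\leqslant\liminf_j\mu_j(H_m^c)\leqslant M2^{-m}$. Writing $F:=\bigcup_m H_m\subset A$, so that $F^c=\bigcap_m H_m^c$, I conclude $\mu_0^*(F^c)\leqslant\inf_m\mu_0(H_m^c)=0$. Hence $A^c\subset F^c$ is $\mu_0$-negligible, i.e.\ $\mu_0$ is concentrated on $A$; therefore $\mu_0\in\mathcal E^+(A)$, the cone $\mathcal E^+(A)$ is strongly closed, and $\mathcal E'(A)=\mathcal E^+(A)$ by definition of $\mathcal E'(A)$. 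The main obstacle is precisely the passage just carried out: the naive (and in general false) assertion that a limit of measures on $A$ is carried by $A$ must be replaced by the capacitary mass estimate, which shows that the part of each $\mu_j$ lying off the closed approximant $H_m$ carries mass at most $M2^{-m}$ \emph{uniformly in $j$}; only then does vague lower semicontinuity on the open sets $H_m^c$ force $\mu_0$ onto $\bigcup_m H_m\subset A$. The construction of the inner closed approximants $H_m$ and this uniform mass bound are the two technical points requiring care.
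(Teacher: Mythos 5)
Your argument is correct, but it follows a genuinely different route from the paper's. The paper's proof is a short appeal to Fuglede's quasi-topology machinery: a strongly convergent net can be taken strongly bounded, hence it lies in a ball $\mathcal E^+_{q'}:=\{\mu\in\mathcal E^+:\ \|\mu\|\leqslant q'\}$, which is hereditary and vaguely compact, and \cite[Corollary~6.2]{Fu4} then shows that $\mathcal E^+_{q'}\cap\mathfrak M^+(A)$ is vaguely compact for quasiclosed $A$; since the vague topology is Hausdorff, the vague limit of the net must lie in this intersection, i.e.\ be concentrated on $A$. You instead argue from scratch: metrizability of the norm topology reduces the problem to sequences; quasiclosedness is converted into \emph{inner} closed approximants $H_m\subset A$ with ${\rm cap}^*(A\setminus H_m)\leqslant4^{-m}$; the capacitary mass bound $\nu(X)\leqslant\|\nu\|\sqrt{{\rm cap}^*(N)}$ for $\nu\in\mathcal E^+$ concentrated on $N$ yields $\mu_j^*(H_m^c)\leqslant M2^{-m}$ \emph{uniformly in} $j$; and vague lower semicontinuity on the open sets $H_m^c$ (Lemma~\ref{lemma-semi}) transfers this to $\mu_0$, forcing $\mu_0^*(A^c)=0$. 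What the paper's route buys is brevity, at the price of invoking a nontrivial external result from \cite{Fu4}; what yours buys is a self-contained, quantitative proof resting only on the definitions of capacity and quasiclosedness, which moreover treats closed and quasiclosed $A$ uniformly. Two steps deserve an explicit word in a polished write-up, though neither is a genuine gap. First, the reduction to $\kappa\geqslant0$ via Remark~\ref{fIII} (needed for $\|\mu_j|_Q\|\leqslant\|\mu_j\|$ and $I(\nu|_K)\leqslant I(\nu)$) changes the kernel when $X$ is compact, so you should note that replacing $\kappa$ by $\kappa+q$ preserves strong convergence (vague convergence gives $\mu_j(X)\to\mu_0(X)$, since $1\in C_0(X)$ for compact $X$) and preserves quasiclosedness (capacities transform by the increasing map $c\mapsto c/(1+qc)$). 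Second, since measures in $\mathcal E^+(A)$ need not be bounded, the identity $\mu_j(H_m^c)=\mu_j(A\setminus H_m)$ hides an inner/outer measure distinction; it is cleanest to bound the upper integral directly: for any $\varphi\in C_0(X)$ with $0\leqslant\varphi\leqslant1_{H_m^c}$ one has $\mu_j(\varphi)=\int\varphi 1_A\,d\mu_j\leqslant\mu_j\bigl(S(\varphi)\cap A\setminus H_m\bigr)\leqslant\nu(X)$, where $\nu:=\mu_j|_{A\setminus H_m}$, and taking the supremum over such $\varphi$ gives exactly $\mu_j^*(H_m^c)\leqslant\nu(X)$, which is the quantity your mass estimate controls.
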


\begin{proof}
Given a net $(\mu_s)\subset\mathcal E^+(A)$ converging strongly (hence vaguely) to $\mu_0\in\mathcal E^+$, we only need to show that $\mu_0$ is concentrated on $A$. For closed $A$, the cone $\mathfrak M^+(A)$ is vaguely closed according to \cite[Section~III.2, Proposition~6]{B2}, whence $\mu_0\in\mathfrak M^+(A)$.

For quasiclosed $A$, note that for every $q\in(0,\infty)$, $\mathcal E^+_q:=\{\mu\in\mathcal E^+:\ \|\mu\|\leqslant q\}$
is hereditary \cite[Definition~5.2]{Fu4} and vaguely compact \cite[Lemma~2.5.1]{F1}.
Since $(\mu_s)$ can certainly be chosen to be strongly bounded, $(\mu_s)\subset\mathcal E^+_{q'}$ for some $q'\in(0,\infty)$. Applying now \cite[Corollary~6.2]{Fu4} with $\mathcal J:=\mathcal E^+_{q'}$ and $H:=A$, we infer that $\widetilde{\mathcal J}:=\mathcal E^+_{q'}\cap\mathfrak M^+(A)$ is vaguely compact, and consequently, $(\mu_s)$ $({}\subset\widetilde{\mathcal J})$ has a vague limit point $\nu_0\in\widetilde{\mathcal J}$. The vague topology being Hausdorff, $\nu_0=\mu_0$, whence $\mu_0\in\widetilde{\mathcal J}\subset\mathfrak M^+(A)$ as desired.\end{proof}

\subsection{Inner capacitary measures}\label{sec-cap}
Assume for a moment that a set $A=K\subset X$ is compact, and $w(K)<\infty$; then the infimum in (\ref{W}) is an actual minimum, the energy being vaguely l.s.c.\ on $\mathfrak M^+$ (the principle of descent), whereas the class $\breve{\mathfrak M}^+(K)$ being vaguely compact (cf.\ \cite[Section~III.1, Corollary~3]{B2}).
But if $A\subset X$ is {\it noncompact}, the class $\breve{\mathfrak M}^+(A)$ is no longer vaguely compact, and the infimum in (\ref{W}) is in general not attained among the admissible measures --- not even for perfect kernels.

To overcome this inconvenience, Fuglede \cite[Theorem~4.1]{F1} has formulated a {\it dual\/} minimum energy problem, which leads to the same concept of inner capacity, but it is already solvable. Under the assumptions of the present study, where the kernel $\kappa$ is assumed throughout to be perfect, Fuglede's result can be specified as follows.

\begin{theorem}[{\rm\cite[Theorem~6.1]{Z-arx-22}}]\label{prop.1.2'}For any\/ $A\subset X$,
\begin{equation}\label{I}\inf_{\nu\in\Gamma_A}\,\|\nu\|^2={\rm cap}_*(A),\end{equation}
where
\begin{equation*}\Gamma_A:=\bigl\{\nu\in\mathcal E^+: \ U^\nu\geqslant1\text{ \ n.e.\ on $A$}\bigr\}.\end{equation*}
If\/ ${\rm cap}_*(A)<\infty$, the infimum in\/ {\rm(\ref{I})} is an actual minimum with the unique minimizer\/ $\gamma_A\in\Gamma_A$, called the inner capacitary measure of\/ $A$ and having the properties
\begin{gather}
\label{pr1}\gamma_A(X)=\|\gamma_A\|^2={\rm cap}_*(A),\\
\label{pr0}U^{\gamma_A}\geqslant1\text{ \ n.e.\ on\/ $A$},\\
\notag U^{\gamma_A}\leqslant1\text{ \ on\/ $S(\gamma_A)$},\\
\notag U^{\gamma_A}=1\text{ \ $\gamma_A$-a.e.\ on\/ $X$.}
\end{gather}
If moreover Frostman's maximum principle is fulfilled, then also
\begin{equation}\label{frr}
U^{\gamma_A}=1\text{ \ n.e.\ on $A$.}
\end{equation}
\end{theorem}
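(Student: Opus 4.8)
The plan is to read (\ref{I}) as a norm-minimization over the convex set $\Gamma_A$ inside the strongly complete cone $\mathcal E^+$, and to realize its solution as the strong limit of the capacitary measures of compact exhausting sets. First, if ${\rm cap}_*(A)=\infty$, then the Cauchy--Schwarz bound below forces $\Gamma_A=\varnothing$, so both sides of (\ref{I}) equal $+\infty$; thus I may assume ${\rm cap}_*(A)<\infty$. For the lower bound I would fix $\nu\in\Gamma_A$ and an arbitrary $\mu\in\breve{\mathcal E}^+(A)$: the set $\{x\in A:\ U^\nu(x)<1\}$ is Borel, of inner capacity zero, and $\mu$-$\sigma$-finite (the bounded $\mu$ being concentrated on $A$), so by Lemma~\ref{l-negl} it is $\mu$-negligible, whence $\langle\nu,\mu\rangle=\int U^\nu\,d\mu\geqslant\mu(X)=1$. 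Cauchy--Schwarz then gives $\|\nu\|^2\geqslant1/I(\mu)$, and taking the supremum over $\mu$ yields $\|\nu\|^2\geqslant1/w(A)={\rm cap}_*(A)$, cf.\ (\ref{W}); hence $\inf_{\nu\in\Gamma_A}\|\nu\|^2\geqslant{\rm cap}_*(A)$.

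For the reverse inequality I would exhaust $A$ by $\mathfrak C_A$. For compact $K$ the problem (\ref{W}) is solved by some $\lambda_K$ (vague compactness of $\breve{\mathfrak M}^+(K)$ and vague lower semicontinuity of the energy), and the Frostman-type variational inequalities $U^{\lambda_K}\geqslant w(K)$ q.e.\ on $K$ and $U^{\lambda_K}\leqslant w(K)$ on $S(\lambda_K)$ show that $\gamma_K:={\rm cap}(K)\lambda_K$ lies in $\Gamma_K$ with $\|\gamma_K\|^2={\rm cap}(K)=\gamma_K(X)$; by the lower bound applied to $K$, $\gamma_K$ is of minimum norm in $\Gamma_K$. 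Since $K_1\subset K_2$ gives $\Gamma_{K_2}\subset\Gamma_{K_1}$, the projection inequality for the minimizer $\gamma_{K_1}$ of the convex set $\Gamma_{K_1}$ reads $\|\gamma_{K_2}-\gamma_{K_1}\|^2\leqslant\|\gamma_{K_2}\|^2-\|\gamma_{K_1}\|^2={\rm cap}(K_2)-{\rm cap}(K_1)$. As ${\rm cap}(K)\uparrow{\rm cap}_*(A)<\infty$ by (\ref{153}), the net $(\gamma_K)_{K\in\mathfrak C_A}$ is strong Cauchy, and by perfectness it converges both strongly and vaguely to some $\gamma_A\in\mathcal E^+$ with $\|\gamma_A\|^2=\lim_{K\uparrow A}{\rm cap}(K)={\rm cap}_*(A)$; uniqueness of the minimizer then follows from the parallelogram identity together with strict positive definiteness.

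The crux is to check $\gamma_A\in\Gamma_A$. I would first prove that for compact $K_0$ the cone $\Gamma_{K_0}$ is strongly closed, via the identity $\Gamma_{K_0}=\{\nu\in\mathcal E^+:\ \langle\nu,\mu\rangle\geqslant\mu(X)\text{ for all }\mu\in\mathcal E^+(K_0)\}$. The inclusion ``$\subset$'' is the $\mu$-negligibility argument above, while for ``$\supset$'' one notes that if $U^\nu\geqslant1$ failed n.e.\ on $K_0$, then by countable subadditivity (Lemma~\ref{str}) the \emph{compact} set $K_0\cap\{U^\nu\leqslant1-1/m\}$ would have positive capacity for some $m$, and any nonzero $\mu\in\mathcal E^+$ concentrated on it (Lemma~\ref{l-negl1}) would violate the inequality. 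Being an intersection of strongly closed half-spaces, $\Gamma_{K_0}$ is strongly closed, so applying it to the cofinal subnet $(\gamma_K)_{K\supset K_0}\subset\Gamma_{K_0}$ gives $U^{\gamma_A}\geqslant1$ n.e.\ on $K_0$. Since ${\rm cap}_*(\cdot)$ is the supremum of the capacities of its compact subsets, letting $K_0$ range over $\mathfrak C_A$ upgrades this to $U^{\gamma_A}\geqslant1$ n.e.\ on $A$, i.e.\ $\gamma_A\in\Gamma_A$; with the lower bound this establishes (\ref{I}) and identifies $\gamma_A$ as the minimizer.

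For the remaining relations, I would obtain $U^{\gamma_A}\leqslant1$ on $S(\gamma_A)$ from vague convergence: each $x_0\in S(\gamma_A)$ is a limit along the net of points $x_K\in S(\gamma_K)$, so the principle of descent and $U^{\gamma_K}\leqslant1$ on $S(\gamma_K)$ give $U^{\gamma_A}(x_0)\leqslant\liminf_K U^{\gamma_K}(x_K)\leqslant1$; hence $\{U^{\gamma_A}>1\}$ is an open $\gamma_A$-null set and $U^{\gamma_A}\leqslant1$ $\gamma_A$-a.e. As total mass is vaguely lower semicontinuous, $\gamma_A(X)\leqslant\liminf_K\gamma_K(X)={\rm cap}_*(A)$, and the squeeze ${\rm cap}_*(A)=\|\gamma_A\|^2=\int U^{\gamma_A}\,d\gamma_A\leqslant\gamma_A(X)\leqslant{\rm cap}_*(A)$ then forces simultaneously $\gamma_A(X)={\rm cap}_*(A)$ and $U^{\gamma_A}=1$ $\gamma_A$-a.e.; if moreover Frostman's maximum principle holds, $U^{\gamma_A}\leqslant1$ on $S(\gamma_A)$ propagates to all of $X$, which with $U^{\gamma_A}\geqslant1$ n.e.\ on $A$ yields (\ref{frr}). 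I expect the main obstacle to be exactly this passage to the noncompact limit: neither the constraint ``$U^\nu\geqslant1$ n.e.\ on $A$'' nor the total mass is manifestly preserved under strong or vague limits, and a priori mass could escape to infinity. The device resolving both is the pairing of strong closedness of each $\Gamma_{K_0}$ (pinning $\gamma_A$ into $\Gamma_A$) with the principle of descent along convergent support points (bounding $U^{\gamma_A}$ from above and, through the squeeze, recovering the exact mass); it is precisely perfectness --- strong completeness of $\mathcal E^+$ and the strong topology being finer than the vague one --- that guarantees the exhausting net converges in the first place.
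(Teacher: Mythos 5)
You are asked to compare against ``the paper's own proof,'' but the paper contains none: Theorem~\ref{prop.1.2'} is imported verbatim from \cite[Theorem~6.1]{Z-arx-22}. So I judge your argument on its own merits and against the machinery the paper deploys around it. Judged that way, your proof is correct, and it is the natural route: the lower bound $\|\nu\|^2\geqslant{\rm cap}_*(A)$ via Lemma~\ref{l-negl} and Cauchy--Schwarz (which also disposes of the case ${\rm cap}_*(A)=\infty$, since then $\Gamma_A=\varnothing$); the construction of $\gamma_A$ as the strong limit of the compact capacitary measures $\gamma_K={\rm cap}(K)\lambda_K$ using monotonicity $\Gamma_{K_2}\subset\Gamma_{K_1}$ and the projection inequality of \cite[Lemma~4.1.1]{F1} --- exactly the inequality (\ref{str-ca}) that this paper uses in its proof of Theorem~\ref{th2'} --- together with perfectness; and the squeeze ${\rm cap}_*(A)=\|\gamma_A\|^2=\int U^{\gamma_A}\,d\gamma_A\leqslant\gamma_A(X)\leqslant{\rm cap}_*(A)$ for (\ref{pr1}) and the $\gamma_A$-a.e.\ equality. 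In passing you re-derive the convergence statement that the paper quotes as Theorem~\ref{cor2}. The one step where you supply something not visible anywhere in the paper is the verification $\gamma_A\in\Gamma_A$, and your device works: the identity $\Gamma_{K_0}=\{\nu\in\mathcal E^+:\ \langle\nu,\mu\rangle\geqslant\mu(X)\ \text{for all}\ \mu\in\mathcal E^+(K_0)\}$ is proved correctly in both directions (``$\subset$'' by Lemma~\ref{l-negl}, ``$\supset$'' by Lemma~\ref{str} plus Lemma~\ref{l-negl1} applied to the compact set $K_0\cap\{U^\nu\leqslant1-1/m\}$), each half-space is strongly closed by Cauchy--Schwarz, and the upgrade from compact $K_0$ to arbitrary $A$ via the inner regularity (\ref{153}) of ${\rm cap}_*$ is airtight.

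Three small repairs, none structural. First, your exhaustion tacitly assumes ${\rm cap}(K)>0$: for compact $K$ of capacity zero the equilibrium measure $\lambda_K$ does not exist, so either set $\gamma_K:=0$ there or restrict to the compacts of positive capacity (a cofinal family when ${\rm cap}_*(A)>0$; the case ${\rm cap}_*(A)=0$ is trivial, with minimizer $\gamma_A=0$). Second, the Frostman-type inequality available from Theorem~\ref{th-ch1} (with $f=0$) is $U^{\lambda_K}\geqslant w(K)$ \emph{nearly} everywhere on $K$, not quasi-everywhere; n.e.\ is all your argument needs, so simply weaken the claim. Third, for arbitrary $A$ the set $\{x\in A:\ U^\nu(x)<1\}$ need not be Borel; what Lemma~\ref{l-negl} actually requires is that it be $\mu$-measurable and $\mu$-$\sigma$-finite, which holds because $A$ is $\mu$-measurable (any $\mu\in\mathfrak M^+(A)$ being concentrated on $A$), $\{U^\nu<1\}$ is Borel by the lower semicontinuity of $U^\nu$, and $\mu$ is bounded.
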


The following convergence theorem is often useful in applications.

 \begin{theorem}[{\rm\cite[Theorem~8.1]{Z-arx-22}}]\label{cor2}For any\/ $A\subset X$ with\/ ${\rm cap}_*(A)<\infty$,
\begin{equation*}\gamma_K\to\gamma_A\text{ \ strongly and vaguely in\/ $\mathcal E^+$ as\/ $K\uparrow A$.}\end{equation*}
If moreover the first and the second maximum principles both hold,
then also
\begin{equation*}U^{\gamma_K}\uparrow U^{\gamma_A}\text{ \ pointwise on\/ $X$ as\/ $K\uparrow A$.}\end{equation*}
\end{theorem}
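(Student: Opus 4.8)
The plan is to reduce everything to the variational characterization of $\gamma_A$ furnished by Theorem~\ref{prop.1.2'} together with the monotone exhaustion relation (\ref{153}). I would first prove the strong convergence by a minimum-norm (projection) argument in the pre-Hilbert space $\mathcal E$. Since $K\subset A$ forces $\Gamma_A\subset\Gamma_K$ (the constraint $U^\nu\geqslant1$ n.e.\ on the larger set is the more restrictive one), we have $\gamma_A\in\Gamma_K$; and since ${\rm cap}(K)\leqslant{\rm cap}_*(A)<\infty$, Theorem~\ref{prop.1.2'} guarantees that $\gamma_K$ exists and is the unique element of minimum norm in the convex set $\Gamma_K$. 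The standard variational inequality for a minimum-norm element of a convex set then yields
\[
\|\gamma_A-\gamma_K\|^2\leqslant\|\gamma_A\|^2-\|\gamma_K\|^2={\rm cap}_*(A)-{\rm cap}(K),
\]
the last equality being (\ref{pr1}) applied to $A$ and $K$. By (\ref{153}), ${\rm cap}(K)\uparrow{\rm cap}_*(A)$ as $K\uparrow A$, so the right-hand side tends to $0$ along the directed set $\mathfrak C_A$, whence $\gamma_K\to\gamma_A$ strongly. Vague convergence is then immediate: the kernel being perfect, the strong topology on $\mathcal E^+$ is finer than the vague one, so the strong convergence of $(\gamma_K)$ forces vague convergence to the same limit.

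For the second assertion I would assume both maximum principles and first show that $K\mapsto U^{\gamma_K}$ is monotone and dominated by $U^{\gamma_A}$. For compact $K_1\subset K_2\subset A$, the property $U^{\gamma_{K_1}}=1$ $\gamma_{K_1}$-a.e.\ (from Theorem~\ref{prop.1.2'}) combines with (\ref{pr0}) for $K_2$, which gives $U^{\gamma_{K_2}}\geqslant1$ n.e.\ on $K_2\supset K_1$; since $\gamma_{K_1}$ is a bounded measure concentrated on the compact set $K_1$, sets of inner capacity zero are $\gamma_{K_1}$-negligible (Lemma~\ref{l-negl}, with subadditivity from Lemma~\ref{str}), so $U^{\gamma_{K_1}}\leqslant U^{\gamma_{K_2}}$ holds $\gamma_{K_1}$-a.e. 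The domination principle then propagates this inequality to all of $X$. Running the identical comparison with $A$ in place of $K_2$ gives $U^{\gamma_K}\leqslant U^{\gamma_A}$ on $X$ for every $K\in\mathfrak C_A$. Consequently the net $\bigl(U^{\gamma_K}(x)\bigr)_{K\in\mathfrak C_A}$ is increasing and bounded above by $U^{\gamma_A}(x)$, hence converges to some $u(x)\leqslant U^{\gamma_A}(x)$.

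It then remains to pin the limit from below, and here I would invoke the principle of descent: the already-established vague convergence $\gamma_K\to\gamma_A$ together with the joint lower semicontinuity of $(x,\mu)\mapsto U^\mu(x)$ gives $\liminf_{K\uparrow A}U^{\gamma_K}(x)\geqslant U^{\gamma_A}(x)$ for each fixed $x$; for a monotone increasing net this $\liminf$ equals $u(x)$, so $u(x)\geqslant U^{\gamma_A}(x)$ and therefore $u=U^{\gamma_A}$ pointwise, i.e.\ $U^{\gamma_K}\uparrow U^{\gamma_A}$ on $X$. The main obstacle is this second part: the strong/vague convergence is a soft projection estimate, whereas the pointwise monotone convergence genuinely needs both maximum principles in tandem --- the domination principle to secure both the monotonicity and the bound $U^{\gamma_K}\leqslant U^{\gamma_A}$, and the descent principle (via vague convergence) to push the supremum up to $U^{\gamma_A}$ rather than leaving it merely below. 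Some care is also needed when transferring the ``nearly everywhere'' inequalities to ``$\gamma_K$-almost everywhere'' before the domination principle applies, which is precisely where the negligibility Lemma~\ref{l-negl} and the finiteness ${\rm cap}_*(A)<\infty$ (ensuring each $\gamma_K$ is bounded) enter.
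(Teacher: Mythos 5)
Your proof is correct. A preliminary remark: the paper itself contains no proof of Theorem~\ref{cor2} --- it is imported verbatim from \cite[Theorem~8.1]{Z-arx-22} --- so there is no internal argument to compare against; your proposal must be judged on its own merits, and it stands up. Moreover, it is assembled from exactly the toolkit the paper deploys elsewhere: the minimum-norm inequality in a convex subset of $\mathcal E$ that you call ``standard'' is \cite[Lemma~4.1.1]{F1}, which the paper itself invokes (for the sets $\Gamma_{A\setminus K}$) in Section~\ref{sec-pr}; combined with the inclusion $\Gamma_A\subset\Gamma_K$, identity (\ref{pr1}), the monotone limit (\ref{153}), and perfectness to pass from strong to vague convergence, your first half is complete. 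The second half is also sound: the passage from ``nearly everywhere on $K$'' to ``$\gamma_K$-almost everywhere'' is legitimate via Lemma~\ref{l-negl}, since the exceptional set of the lower semicontinuous potential is Borel and $\gamma_K$ is bounded; the domination principle then yields both the monotonicity of $(U^{\gamma_K})$ and the bound $U^{\gamma_K}\leqslant U^{\gamma_A}$ on $X$, and the descent principle applied along the already-established vague convergence pins the monotone limit from below, including at points where $U^{\gamma_A}=+\infty$. One observation worth recording: your proof of the pointwise monotone convergence never uses Frostman's (first) maximum principle --- you rely on the unconditional property $U^{\gamma_K}=1$ $\gamma_K$-a.e.\ from Theorem~\ref{prop.1.2'} rather than on (\ref{frr}), so only the domination principle enters. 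You have thus proved the second assertion under a formally weaker hypothesis than stated, which of course establishes the theorem a fortiori.
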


In general, $\gamma_A$ is {\it not\/} concentrated on the set $A$ itself.\footnote{For instance, if $A:=B_r$ is an open ball $\{|x|<r\}$, $r>0$, in $\mathbb R^n$, $n\geqslant3$, and if $\kappa(x,y)$ is the Newtonian kernel $|x-y|^{2-n}$, then $\gamma_{B_r}$ is the positive measure of total mass $r^{n-2}$, uniformly distributed over the sphere $\{|x|=r\}$. Thus $S(\gamma_{B_r})\cap B_r=\varnothing$. (Compare with Theorem~\ref{prop.1.2''}.) \label{in-f}}
However, $\gamma_A\in\mathcal E^+(A)$ would necessarily hold if $\mathcal E^+(A)$ were assumed to be strongly closed. (Indeed, then $\gamma_A\in\mathcal E'(A)=\mathcal E^+(A)$, the former relation being implied by Theorem~\ref{cor2}.)

\begin{theorem}[{\rm cf.\ \cite[Theorem~7.2]{Z-arx-22}}]\label{prop.1.2''} Given\/ $A\subset X$ with\/ ${\rm cap}_*(A)<\infty$, assume that the class\/ $\mathcal E^+(A)$ is strongly closed\/ {\rm(}or in particular that the set\/ $A$ is quasiclosed\/{\rm)}. Then necessarily\/ $\gamma_A\in\mathcal E^+(A)$. If moreover Frostman's maximum principle is fulfilled, then\/ $\gamma_A$ is uniquely characterized within\/ $\mathcal E^+(A)$ by property\/ {\rm(\ref{frr})}.\footnote{We emphasize that, if the requirement of the strong closedness of the class $\mathcal E^+(A)$ is dropped, then there is in general {\it no\/} $\nu\in\mathcal E^+(A)$ with $U^\nu=1$ n.e.\ on $A$ (see \cite[footnote~3]{Z-arx-22}).}\end{theorem}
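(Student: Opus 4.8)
The plan is to split the statement into the membership assertion $\gamma_A\in\mathcal E^+(A)$ and the unique-characterization assertion, treating the former via the convergence theorem and the latter via an energy computation. For the membership, I would first recall that for a compact $K\subset A$ the capacitary measure $\gamma_K$ is concentrated on $K$, so that $\gamma_K\in\mathcal E^+(K)\subset\mathcal E^+(A)$. Theorem~\ref{cor2} then gives $\gamma_K\to\gamma_A$ strongly as $K\uparrow A$, whence $\gamma_A$ lies in the strong closure $\mathcal E'(A)$ of $\mathcal E^+(A)$. Under the standing hypothesis that $\mathcal E^+(A)$ is strongly closed (which by Theorem~\ref{l-quasi} holds whenever $A$ is quasiclosed) we have $\mathcal E'(A)=\mathcal E^+(A)$, and therefore $\gamma_A\in\mathcal E^+(A)$.

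Now add Frostman's maximum principle. Existence of a measure in $\mathcal E^+(A)$ with the required property is immediate, since $\gamma_A$ itself satisfies $U^{\gamma_A}=1$ n.e.\ on $A$ by (\ref{frr}). For uniqueness I would take an arbitrary $\nu\in\mathcal E^+(A)$ with $U^\nu=1$ n.e.\ on $A$ and aim to show $\|\nu-\gamma_A\|=0$. The crucial preliminary point is that $\gamma_A$ is a bounded measure, its total mass being ${\rm cap}_*(A)<\infty$ by (\ref{pr1}); hence every subset of $X$ is $\gamma_A$-$\sigma$-finite, and the exceptional set $\{x\in A:\ U^\nu(x)\neq1\}$, being of inner capacity zero, is $\gamma_A$-negligible by Lemma~\ref{l-negl}. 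Combined with $\gamma_A\in\mathcal E^+(A)$ this yields $\langle\nu,\gamma_A\rangle=\int U^\nu\,d\gamma_A=\gamma_A(X)={\rm cap}_*(A)=\|\gamma_A\|^2$.

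The remaining difficulty is that $\nu$ need not be bounded a priori, so the analogous integrations against $\nu$ are not yet licensed. I would remove this obstacle by first proving $\nu(X)\leqslant{\rm cap}_*(A)$: for each compact $K\subset A$ the restriction $\nu|_K$ is bounded, so $U^{\gamma_A}=1$ n.e.\ on $A$ gives $\int U^{\gamma_A}\,d\nu\geqslant\int_K U^{\gamma_A}\,d\nu=\nu(K)$, and letting $K\uparrow A$ together with $\int U^{\gamma_A}\,d\nu=\langle\nu,\gamma_A\rangle={\rm cap}_*(A)$ forces $\nu(X)=\nu_*(A)\leqslant{\rm cap}_*(A)$. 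Once $\nu$ is known to be bounded, the same capacity-zero exceptional sets are $\nu$-negligible as well, whence $\|\nu\|^2=\int U^\nu\,d\nu=\nu(X)={\rm cap}_*(A)$. Expanding $\|\nu-\gamma_A\|^2=\|\nu\|^2-2\langle\nu,\gamma_A\rangle+\|\gamma_A\|^2={\rm cap}_*(A)-2{\rm cap}_*(A)+{\rm cap}_*(A)=0$ and invoking the energy principle then gives $\nu=\gamma_A$.

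The main obstacle throughout is precisely this $\sigma$-finiteness and negligibility bookkeeping for sets of inner capacity zero: the identity ``$U^\mu=1$ n.e.\ on $A$'' only converts into a pointwise value inside an integral after one knows the offending set is null for the measure in hand, and Lemma~\ref{l-negl} requires $\mu$-$\sigma$-finiteness, which is not automatic for an unbounded $\nu$ on a general locally compact $X$. The idea that unlocks the argument is to pair first against the \emph{bounded} measure $\gamma_A$ (whose finiteness comes from ${\rm cap}_*(A)<\infty$), deduce the boundedness of $\nu$ by a compact-exhaustion estimate, and only then run the symmetric computation; everything else is a routine Hilbert-space expansion in $\mathcal E$.
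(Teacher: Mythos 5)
Your proof is correct, and its two halves relate to the paper in different ways. The membership claim $\gamma_A\in\mathcal E^+(A)$ is justified in the paper only by the parenthetical remark preceding the theorem, and your first paragraph is exactly that remark spelled out: $\gamma_K\in\mathcal E^+(K)\subset\mathcal E^+(A)$ for compact $K\subset A$ (a classical fact from \cite{F1} that the paper also uses without proof), Theorem~\ref{cor2} gives $\gamma_K\to\gamma_A$ strongly, hence $\gamma_A\in\mathcal E'(A)$, which equals $\mathcal E^+(A)$ under the strong-closedness hypothesis (via Theorem~\ref{l-quasi} in the quasiclosed case). For the unique characterization by (\ref{frr}), by contrast, the paper gives no internal proof at all --- it simply cites \cite[Theorem~7.2]{Z-arx-22} --- so here your self-contained energy argument is a genuine addition, and it checks out: since $\gamma_A(X)={\rm cap}_*(A)<\infty$ by (\ref{pr1}), Lemma~\ref{l-negl} converts ``$U^\nu=1$ n.e.\ on $A$'' into a $\gamma_A$-a.e.\ statement, giving $\langle\nu,\gamma_A\rangle=\gamma_A(X)={\rm cap}_*(A)=\|\gamma_A\|^2$; the compact-exhaustion step then bounds $\nu(X)=\nu_*(A)\leqslant{\rm cap}_*(A)$, after which the same negligibility reasoning applied to the now-bounded $\nu$ yields $\|\nu\|^2=\nu(X)=\int U^{\gamma_A}\,d\nu={\rm cap}_*(A)$, and $\|\nu-\gamma_A\|^2=0$ finishes by the energy principle. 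Your diagnosis of the real difficulty (the $\sigma$-finiteness needed in Lemma~\ref{l-negl}, resolved by pairing against the bounded measure first) is exactly right. Two minor points worth a sentence each in a polished write-up: (i) the inequality $\int U^{\gamma_A}\,d\nu\geqslant\int_K U^{\gamma_A}\,d\nu$ uses $\kappa\geqslant0$, which under the paper's conventions can fail only when $X$ is compact; but in that case every positive Radon measure is automatically bounded, so the exhaustion step is vacuous there; (ii) the equality $\nu(X)={\rm cap}_*(A)$ should be made explicit as $\nu(X)=\int U^{\gamma_A}\,d\nu=\langle\gamma_A,\nu\rangle={\rm cap}_*(A)$, which is indeed what your phrase about ``the same capacity-zero exceptional sets'' licenses once $\nu$ is known to be bounded.
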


For other alternative characterizations of the inner capacitary measure $\gamma_A$ which hold true even for {\it arbitrary\/} $A$, we refer to \cite[Theorems~9.1--9.3]{Z-arx-22}.

\subsection{Inner balayage}\label{sec-bal} Throughout this subsection, a (perfect) kernel $\kappa$ is additionally required to satisfy {\it the domination principle}.

Assume for a moment that $A=K\subset X$ is {\it compact}.  Then one can easily prove by generalizing the classical Gauss variational method (see \cite{Ca2}, cf.\ also \cite[Section~IV.5.23]{L}) that for any given $\mu\in\mathcal E^+$, there exists $\mu^K\in\mathcal E^+(K)$ uniquely determined within $\mathcal E^+(K)$ by the equality $U^{\mu^K}=U^\mu$ n.e.\ on $K$, the uniqueness being obvious from (\ref{E}) applied to the space $X:=K$ and the kernel $\kappa':=\kappa|_{K\times K}$.
This $\mu^K$ is said to be the {\it balayage\/} of $\mu\in\mathcal E^+$ onto (compact) $K$.

If now $A\subset X$ is {\it arbitrary}, there is in general no $\nu\in\mathcal E^+(A)$ having the property
$U^\nu=U^\mu$ n.e.\ on $A$.
Nevertheless, a substantial theory of inner balayage (sweeping),
generalizing Cartan's theory \cite{Ca2} of Newtonian balayage to a suitable kernel $\kappa$ on a locally compact space $X$,
was developed (see \cite{Z-arx1}--\cite{Z-arx}), and this was performed by means of several alternative approaches described in Theorem~\ref{th-intr} below.

For any $\mu\in\mathcal E^+$ and any $A\subset X$, denote
\[\Gamma_{A,\mu}:=\bigl\{\nu\in\mathcal E^+: \ U^\nu\geqslant U^\mu\text{ \ n.e.\ on $A$}\bigr\}.\]

\begin{theorem}[{\rm\cite[Theorem~3.1]{Z-arx-22}}]\label{th-intr} There exists precisely the same unique measure\/ $\mu^A\in\Gamma_{A,\mu}\cap\mathcal E'(A)$, called the inner balayage of\/ $\mu\in\mathcal E^+$ to\/ $A\subset X$, and solving each of the following three extremal problems:
\begin{gather}
\notag
 \|\mu^A\|=\min_{\nu\in\Gamma_{A,\mu}}\,\|\nu\|,\\
\notag U^{\mu^A}=\min_{\nu\in\Gamma_{A,\mu}}\,U^\nu\text{ \ on\ $X$},\\
\label{opr'}
\|\mu-\mu^A\|=\min_{\nu\in\mathcal E'(A)}\,\|\mu-\nu\|.
\end{gather}
This\/ $\mu^A$ is said to be the inner balayage of\/ $\mu$ to\/ $A$, and it has the properties
\begin{align}
\label{eq-pr-10}U^{\mu^A}&=U^\mu\text{ \ n.e.\ on\ }A,\\
\label{eq-pr-11}U^{\mu^A}&=U^\mu\text{ \ $\mu^A$-a.e.,}\\
\notag U^{\mu^A}&\leqslant U^\mu\text{ \ on $X$.}
\end{align}
Moreover, {\rm(\ref{eq-pr-10})} characterizes the inner balayage\/ $\mu^A$ uniquely within\/ $\mathcal E'(A)$.
\end{theorem}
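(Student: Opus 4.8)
The plan is to construct $\mu^A$ by exhausting $A$ from inside by compact sets and taking a strong limit of the (classical) balayage measures $\mu^K$ already available in the excerpt, and then to read off the three extremal characterizations from the Hilbert-space structure of $\mathcal E$. For compact $K$ one has $\mu^K\in\mathcal E^+(K)$ with $U^{\mu^K}=U^\mu$ n.e.\ on $K$; since $\mu^K$ is concentrated on the compact $K$ and has finite energy, hence ignores sets of inner capacity zero (Lemma~\ref{l-negl}), this equality holds $\mu^K$-a.e., and the (permanent) domination principle upgrades it to $U^{\mu^K}\leqslant U^\mu$ on all of $X$. Consequently $\|\mu^K\|^2=\int U^{\mu^K}\,d\mu^K=\int U^{\mu^K}\,d\mu\leqslant\int U^\mu\,d\mu=\|\mu\|^2$, so the net $(\mu^K)$ is strongly bounded.

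Next I would prove that $(\mu^K)_{K\uparrow A}$ is strongly Cauchy. Using the iterated-balayage identity $\mu^{K_1}=(\mu^{K_2})^{K_1}$ for $K_1\subset K_2$ (immediate from the n.e.-characterization of balayage onto compacts) together with the fact that $\mu^{K_1}$ ignores capacity-zero sets, one computes $\langle\mu^{K_1},\mu^{K_2}\rangle=\int U^{\mu^{K_2}}\,d\mu^{K_1}=\int U^\mu\,d\mu^{K_1}=\|\mu^{K_1}\|^2$, whence $\|\mu^{K_2}-\mu^{K_1}\|^2=\|\mu^{K_2}\|^2-\|\mu^{K_1}\|^2$. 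By Cauchy--Schwarz $\|\mu^K\|^2$ is nondecreasing in $K$ and it is bounded by $\|\mu\|^2$, so this monotone bounded net of reals is Cauchy; hence $(\mu^K)$ is strongly Cauchy. As the kernel is perfect, $\mathcal E^+$ is strongly complete, so $\mu^K\to\mu^A$ strongly and vaguely, and $\mu^A\in\mathcal E'(A)$ since it is a strong limit of measures $\mu^K\in\mathcal E^+(A)$.

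The pointwise relations are the heart of the matter. Vague lower semicontinuity of $\mu\mapsto U^\mu(x)$ (Lemma~\ref{lemma-semi}) gives $U^{\mu^A}\leqslant\liminf_K U^{\mu^K}\leqslant U^\mu$ everywhere on $X$. From $\langle\mu,\mu^K\rangle=\|\mu^K\|^2\to\|\mu^A\|^2$ one obtains $\int(U^\mu-U^{\mu^A})\,d\mu^A=0$, and since the integrand is $\geqslant0$ this yields (\ref{eq-pr-11}), i.e.\ $U^{\mu^A}=U^\mu$ $\mu^A$-a.e. The step I expect to be the main obstacle is (\ref{eq-pr-10}), the equality n.e.\ on $A$: here one must show that $U^{\mu^K}\uparrow U^{\mu^A}$ outside a set of inner capacity zero (combining strong convergence, the monotonicity of $U^{\mu^K}$ in $K$, and monotone convergence of the mutual energies against arbitrary $\sigma\in\mathcal E^+$), so that the compact equalities $U^{\mu^K}=U^\mu$ n.e.\ on $K$ together with $U^{\mu^A}\leqslant U^\mu$ give $U^{\mu^A}=U^\mu$ n.e.\ on every compact $K\subset A$, and then n.e.\ on $A$ because inner capacity is the supremum of the capacities of compact subsets (cf.\ Lemma~\ref{str}). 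The genuine difficulty is that $\mu^A$, while lying in $\mathcal E'(A)$, need only be concentrated on $\overline A$ and may charge $\overline A\setminus A$, so the passage between ``n.e.\ on $A$'' and ``$\mu^A$-a.e.'' statements must be handled with care.

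Finally I would extract the extremal problems and uniqueness. From (\ref{eq-pr-10}) and the fact that any $\rho\in\mathcal E^+(A)$ ignores the capacity-zero exceptional set, $\int(U^\mu-U^{\mu^A})\,d\rho=0$, so by strong continuity of the inner product $\langle\mu-\mu^A,\rho\rangle=0$ for every $\rho\in\mathcal E'(A)$; that is, $\mu-\mu^A$ is orthogonal to $\mathcal E'(A)$. The Pythagorean identity $\|\mu-\nu\|^2=\|\mu-\mu^A\|^2+\|\mu^A-\nu\|^2$ for $\nu\in\mathcal E'(A)$ then gives (\ref{opr'}) and the uniqueness of $\mu^A$ in $\mathcal E'(A)$. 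Approximating $\mu^A$ by $\mu_s\in\mathcal E^+(A)$ and using $U^\nu\geqslant U^\mu$ n.e.\ on $A$ for $\nu\in\Gamma_{A,\mu}$ gives $\langle\mu^A,\nu\rangle\geqslant\|\mu^A\|^2$, hence the minimum-norm property $\|\mu^A\|\leqslant\|\nu\|$; the minimum-potential property follows from $U^{\mu^A}=U^\mu$ n.e.\ on $A$ via the domination principle. The characterization by (\ref{eq-pr-10}) is then immediate: if $\nu\in\mathcal E'(A)$ satisfies $U^\nu=U^\mu$ n.e.\ on $A$, then $\sigma:=\nu-\mu^A$ has $U^\sigma=0$ n.e.\ on $A$, so $\langle\sigma,\rho\rangle=0$ for all $\rho\in\mathcal E'(A)$, and taking $\rho=\sigma$ gives $\|\sigma\|^2=0$, whence $\nu=\mu^A$ by the energy principle (cf.\ (\ref{E})).
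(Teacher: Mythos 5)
You should first know that the paper does not prove Theorem~\ref{th-intr} at all: it is imported verbatim, with a citation, from \cite[Theorem~3.1]{Z-arx-22}, and the surrounding text indicates that the cited development obtains $\mu^A$ abstractly, as the orthogonal projection of $\mu$ onto the strongly complete convex cone $\mathcal E'(A)$ (see the remark following the theorem and (\ref{opr'})), with the compact-exhaustion convergence then stated separately as Theorem~\ref{th-bal-cont}. Your proof runs in the opposite, Cartan-style direction: you construct $\mu^A$ as the strong limit of the compact balayages $\mu^K$ — the identity $\|\mu^{K_2}-\mu^{K_1}\|^2=\|\mu^{K_2}\|^2-\|\mu^{K_1}\|^2$ for $K_1\subset K_2$, monotonicity and boundedness of $\|\mu^K\|^2$, and perfectness give strong Cauchyness — and only then read off (\ref{eq-pr-10}), (\ref{eq-pr-11}), the everywhere inequality, and the three extremal characterizations. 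This is a legitimate alternative route; it has the merit of delivering Theorem~\ref{th-bal-cont} as a by-product, at the price of taking the compact case as input (which the paper grants) and of heavier nearly-everywhere/almost-everywhere bookkeeping. Your treatment of the hard step (\ref{eq-pr-10}) is sound: the monotone limit $h:=\sup_K U^{\mu^K}$ is l.s.c., satisfies $h\leqslant U^\mu$ everywhere and $h\geqslant U^\mu$ n.e.\ on each compact $K_0\subset A$, while pairing against measures furnished by Lemma~\ref{l-negl1} shows $h=U^{\mu^A}$ n.e.\ on $X$; then (\ref{153}) upgrades "n.e.\ on every compact subset of $A$" to "n.e.\ on $A$".

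Three spots need tightening, all repairable with tools you already use. First, the minimum-potential property does not follow "from $U^{\mu^A}=U^\mu$ n.e.\ on $A$ via the domination principle" applied to $\mu^A$ itself: the domination principle requires $U^{\mu^A}\leqslant U^\nu$ \emph{$\mu^A$-a.e.}, and $\mu^A$ lies only in $\mathcal E'(A)\subset\mathcal E^+(\overline A)$, so it may charge $\overline A\setminus A$ — exactly the pitfall you flag elsewhere. The correct route is through the compacts: $U^{\mu^K}=U^\mu\leqslant U^\nu$ n.e.\ on $K$, hence $\mu^K$-a.e.\ ($\mu^K$ being bounded and concentrated on $K$, Lemma~\ref{l-negl}), so domination gives $U^{\mu^K}\leqslant U^\nu$ on $X$, and the principle of descent yields $U^{\mu^A}\leqslant\liminf_K U^{\mu^K}\leqslant U^\nu$ on $X$. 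Second, in the final uniqueness argument $\sigma:=\nu-\mu^A$ is a signed measure, so you cannot "take $\rho=\sigma$" in a statement quantified over $\rho\in\mathcal E'(A)$; use bilinearity instead: $\|\sigma\|^2=\langle\sigma,\nu\rangle-\langle\sigma,\mu^A\rangle=0$, since both $\nu$ and $\mu^A$ lie in $\mathcal E'(A)$, and then invoke the energy principle. Third, the steps where a general $\rho\in\mathcal E^+(A)$ "ignores the capacity-zero exceptional set" rely on Lemma~\ref{l-negl}, which assumes $\rho$-$\sigma$-finiteness; a measure of finite energy need not be bounded, so either pair against the bounded measures $\mu^K$ (or against $\rho|_K$, which converges strongly to $\rho$) and pass to the limit by strong continuity of the inner product, or verify local negligibility directly. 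None of these repairs changes your strategy; they only make explicit the care that the $\overline A\setminus A$ issue, which you correctly identified as the genuine difficulty, forces throughout.
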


Thus, according to (\ref{opr'}), the inner balayage $\mu^A$ can in particular be characterized as the (unique) {\it orthogonal projection\/} of $\mu\in\mathcal E^+$ in the pre-Hil\-bert space $\mathcal E$ onto the (strongly complete, convex) cone $\mathcal E'(A)$, cf.\ \cite[Theorem~1.12.3]{E2}.

\begin{corollary}\label{eqba}
 For arbitrary\/ $A$ with\/ ${\rm cap}_*(A)<\infty$,
 \begin{equation}\label{opr''}\gamma_A=(\gamma_A)^A.\end{equation}
\end{corollary}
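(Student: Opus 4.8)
The plan is to invoke the uniqueness assertion in Theorem~\ref{th-intr}, according to which the inner balayage $(\gamma_A)^A$ is the unique measure in $\mathcal E'(A)$ whose potential coincides with $U^{\gamma_A}$ nearly everywhere on $A$ (cf.\ (\ref{eq-pr-10}) and the final sentence of that theorem). Since $\gamma_A$ trivially satisfies $U^{\gamma_A}=U^{\gamma_A}$ n.e.\ on $A$, the whole matter reduces to verifying the single membership relation $\gamma_A\in\mathcal E'(A)$; once this is secured, (\ref{opr''}) follows at once from that uniqueness.

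To establish $\gamma_A\in\mathcal E'(A)$, I would argue by strong approximation along the net of compact subsets. Fix an arbitrary $K\in\mathfrak C_A$. Being compact, $K$ is in particular quasicompact, so by Theorem~\ref{l-quasi} the cone $\mathcal E^+(K)$ is strongly closed; applying Theorem~\ref{prop.1.2''} with $A:=K$ (note ${\rm cap}_*(K)<\infty$, the capacity of a compact set being finite) then yields $\gamma_K\in\mathcal E^+(K)$. As $K\subset A$, we have $\mathcal E^+(K)\subset\mathcal E^+(A)\subset\mathcal E'(A)$, whence $\gamma_K\in\mathcal E'(A)$ for every $K\in\mathfrak C_A$.

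By Theorem~\ref{cor2}, the net $(\gamma_K)_{K\in\mathfrak C_A}$ converges strongly to $\gamma_A$ as $K\uparrow A$. The set $\mathcal E'(A)$, being by definition the strong closure of $\mathcal E^+(A)$, is strongly closed; therefore the strong limit $\gamma_A$ of the net $(\gamma_K)\subset\mathcal E'(A)$ again lies in $\mathcal E'(A)$. This completes the verification, and hence, by the uniqueness recalled above, proves (\ref{opr''}).

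The argument is short because every ingredient is already at hand; the only step carrying any content is the membership $\gamma_A\in\mathcal E'(A)$, whose proof rests entirely on the strong convergence $\gamma_K\to\gamma_A$ (Theorem~\ref{cor2}) together with the strong closedness of $\mathcal E'(A)$ and the fact that the capacitary measure of each compact $K$ is concentrated on $K$. I expect no genuine obstacle here. Alternatively, one may bypass the potential-theoretic uniqueness and reason purely Hilbert-geometrically: by the projection characterization (\ref{opr'}), $(\gamma_A)^A$ is the orthogonal projection of $\gamma_A$ onto the strongly complete convex cone $\mathcal E'(A)$, and the projection of a point already lying in $\mathcal E'(A)$ is that point itself, so $\gamma_A\in\mathcal E'(A)$ immediately forces $(\gamma_A)^A=\gamma_A$.
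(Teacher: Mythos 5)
Your proposal is correct, and your closing ``alternative'' is in fact precisely the paper's own proof: the paper simply records the membership $\gamma_A\in\mathcal E'(A)$ (citing Section~\ref{sec-cap}, where it is noted as a consequence of Theorem~\ref{cor2}) and then applies the projection characterization (\ref{opr'}), the orthogonal projection onto $\mathcal E'(A)$ of a point already lying in $\mathcal E'(A)$ being that point itself. Your primary route differs only in which clause of Theorem~\ref{th-intr} it invokes: instead of (\ref{opr'}) you use the assertion that property (\ref{eq-pr-10}) determines the balayage uniquely within $\mathcal E'(A)$, applied to the trivial identity $U^{\gamma_A}=U^{\gamma_A}$ n.e.\ on $A$; this is equally valid and no longer, since both clauses belong to the same theorem and both reduce the problem to the single membership $\gamma_A\in\mathcal E'(A)$. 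What you add beyond the paper is an explicit verification of that membership --- $\gamma_K\in\mathcal E^+(K)\subset\mathcal E^+(A)\subset\mathcal E'(A)$ for compact $K\subset A$ (via Theorems~\ref{l-quasi} and \ref{prop.1.2''}, with ${\rm cap}_*(K)<\infty$ automatic for a strictly positive definite kernel), followed by the strong convergence $\gamma_K\to\gamma_A$ of Theorem~\ref{cor2} and the strong closedness of $\mathcal E'(A)$ --- whereas the paper leaves this as a citation; your argument coincides with the one sketched there, so there is no gap.
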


\begin{proof}
Since $\gamma_A\in\mathcal E'(A)$ (cf.\ Section~\ref{sec-cap}), whereas the orthogonal projection of any $\nu\in\mathcal E'(A)$ onto $\mathcal E'(A)$ is certainly the same $\nu$, (\ref{opr'}) applied to $\gamma_A$ gives (\ref{opr''}).
\end{proof}

In general, $\mu^A\not\in\mathcal E^+(A)$. (Indeed, for $A:=B_r\subset\mathbb R^n$, $\kappa(x,y):=|x-y|^{2-n}$, and $\mu:=\gamma_{B_r}$, where  $n\geqslant3$ and $r\in(0,\infty)$, this is obvious from Corollary~\ref{eqba} and footnote~\ref{in-f}.) Nonetheless, the following assertion holds true.

\begin{corollary}\label{cor-bal}
 If\/ $\mathcal E^+(A)$ is strongly closed\/ {\rm(}or in particular if\/ $A$ is quasiclosed\/{\rm)}, then the inner balayage\/ $\mu^A$ can alternatively be characterized as the orthogonal projection of\/ $\mu$ in the pre-Hilbert space\/ $\mathcal E$ onto the cone\/ $\mathcal E^+(A)$; i.e.\ $\mu^A\in\mathcal E^+(A)$ and
\begin{equation*}
\|\mu-\mu^A\|=\min_{\nu\in\mathcal E^+(A)}\,\|\mu-\nu\|.
\end{equation*}
The same\/ $\mu^A$ is uniquely determined within\/ $\mathcal E^+(A)$ by property\/ {\rm(\ref{eq-pr-10})}.
\end{corollary}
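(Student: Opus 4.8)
The plan is to observe that the stated hypothesis simply collapses the cone $\mathcal E'(A)$ appearing in Theorem~\ref{th-intr} onto $\mathcal E^+(A)$, after which all three asserted properties follow by direct substitution, with no further analytic work required. First I would recall that, by its very definition in Section~\ref{def-quasi}, $\mathcal E'(A)$ is the strong closure of the cone $\mathcal E^+(A)$ inside $\mathcal E^+$. Hence the assumption that $\mathcal E^+(A)$ is strongly closed yields at once the identity $\mathcal E'(A)=\mathcal E^+(A)$. The parenthetical special case of a quasiclosed (or quasicompact) $A$ is then covered by Theorem~\ref{l-quasi}, which provides exactly this strong closedness, so that the same identity holds there too.

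With the identity $\mathcal E'(A)=\mathcal E^+(A)$ in hand, the membership $\mu^A\in\Gamma_{A,\mu}\cap\mathcal E'(A)$ furnished by Theorem~\ref{th-intr} immediately gives $\mu^A\in\mathcal E^+(A)$. Next, the extremal relation (\ref{opr'}), which expresses $\mu^A$ as the minimizer of $\|\mu-\nu\|$ over $\nu\in\mathcal E'(A)$, becomes precisely the minimization over $\nu\in\mathcal E^+(A)$ claimed in the corollary. Equivalently, since $\mathcal E^+(A)$ is now a strongly closed subset of the strongly complete cone $\mathcal E^+$, it is itself strongly complete and convex, so the interpretation recorded in the remark following Theorem~\ref{th-intr} identifies $\mu^A$ as the orthogonal projection of $\mu$ onto $\mathcal E^+(A)$ in the pre-Hilbert space $\mathcal E$. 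Finally, the uniqueness of $\mu^A$ within $\mathcal E'(A)$ characterized by property (\ref{eq-pr-10}), likewise asserted in Theorem~\ref{th-intr}, transfers verbatim to uniqueness within $\mathcal E^+(A)$.

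I expect no genuine obstacle in this argument: the entire content is the reduction $\mathcal E'(A)=\mathcal E^+(A)$, and everything of substance—the existence of $\mu^A$, its variational characterizations, and the uniqueness via (\ref{eq-pr-10})—has already been established in Theorem~\ref{th-intr}. The only point meriting a word of care is the implicit use of strong completeness of $\mathcal E^+(A)$ (inherited from $\mathcal E^+$) to legitimize the orthogonal-projection formulation, but this is exactly the observation already made for $\mathcal E'(A)$ in Section~\ref{def-quasi} and requires nothing new.
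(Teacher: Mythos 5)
Your proposal is correct and follows exactly the paper's own argument: the paper's proof likewise consists of noting that strong closedness gives $\mathcal E'(A)=\mathcal E^+(A)$ and then declaring the result obvious from Theorem~\ref{th-intr}. Your write-up merely spells out the substitution (membership, the extremal relation (\ref{opr'}), and the uniqueness via (\ref{eq-pr-10})) in more detail than the paper does.
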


\begin{proof}
 Since then $\mathcal E'(A)=\mathcal E^+(A)$,  this is obvious from Theorem~\ref{th-intr}.
\end{proof}

The following Theorem~\ref{th-bal-cont} justifies the term "{\it inner\/} balayage". (It is worth emphasizing here that, although a kernel $\kappa$ was assumed in \cite{Z-arx1} to be positive, this restriction on $\kappa$ is unnecessary for the validity of the results from \cite{Z-arx1} quoted below.)

\begin{theorem}[{\rm\cite[Theorem~4.8]{Z-arx1}}]\label{th-bal-cont}For any\/ $\mu\in\mathcal E^+$ and any\/ $A\subset X$,
\begin{gather*}\mu^K\to\mu^A\text{ \ strongly and vaguely in\/ $\mathcal E^+$ as\/ $K\uparrow A$},\\
U^{\mu^K}\uparrow U^{\mu^A}\text{ \ pointwise on\/ $X$ as\/ $K\uparrow A$}.\end{gather*}
\end{theorem}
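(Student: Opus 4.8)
The plan is to exhibit $\mu^K$ and $\mu^A$ as orthogonal projections of the fixed measure $\mu\in\mathcal E^+$ onto nested strongly complete convex cones, and then to combine the geometry of the pre-Hilbert space $\mathcal E$ with an exhaustion (density) argument. By Theorem~\ref{th-intr}, and in particular by the variational characterization~(\ref{opr'}), $\mu^A$ is the orthogonal projection of $\mu$ onto $\mathcal E'(A)$; for compact $K$ one has $\mathcal E'(K)=\mathcal E^+(K)$ by Theorem~\ref{l-quasi}, so $\mu^K$ is the projection of $\mu$ onto $\mathcal E^+(K)$. Put $d_K:=\|\mu-\mu^K\|$ and $d_A:=\|\mu-\mu^A\|$. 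I would first record the monotonicity inherent in the exhaustion $K\uparrow A$: since $\mathcal E^+(K_1)\subset\mathcal E^+(K_2)\subset\mathcal E'(A)$ whenever $K_1\subset K_2\subset A$, the net $(d_K)_{K\in\mathfrak C_A}$ is nonincreasing with $d_K\geqslant d_A$. Dually, from the inclusions $\Gamma_{A,\mu}\subset\Gamma_{K_2,\mu}\subset\Gamma_{K_1,\mu}$ and the characterization $U^{\mu^K}=\min_{\nu\in\Gamma_{K,\mu}}U^\nu$ in Theorem~\ref{th-intr}, the potentials $U^{\mu^K}$ form a pointwise nondecreasing net bounded above by $U^{\mu^A}$, the bound following from $\mu^A\in\Gamma_{K,\mu}$, which holds because $U^{\mu^A}=U^\mu$ n.e.\ on $A$ by~(\ref{eq-pr-10}) and hence n.e.\ on each $K\subset A$.

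The key step is to prove that $d_K\downarrow d_A$ as $K\uparrow A$, for which I would show that $\bigcup_{K\in\mathfrak C_A}\mathcal E^+(K)$ is strongly dense in $\mathcal E'(A)$. Since $\mathcal E'(A)$ is by definition the strong closure of $\mathcal E^+(A)$ (Section~\ref{def-quasi}), it suffices to approximate an arbitrary $\nu\in\mathcal E^+(A)$ by its traces $\nu|_K\in\mathcal E^+(K)$. Here Lemma~\ref{222} is decisive: writing $\nu=\nu|_K+\nu|_{A\setminus K}$ gives $\|\nu-\nu|_K\|^2=I(\nu|_{A\setminus K})$, and since $I(\nu|_{A\setminus K})=I(\nu)-I(\nu|_K)-2I(\nu|_K,\nu|_{A\setminus K})\leqslant I(\nu)-I(\nu|_K)$, which tends to $0$ as $I(\nu|_K)\uparrow I(\nu)<\infty$ by Lemma~\ref{222}, we obtain $\nu|_K\to\nu$ strongly. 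Consequently the distance from $\mu$ to $\bigcup_K\mathcal E^+(K)$ equals the distance to its strong closure $\mathcal E'(A)$, i.e.\ $\inf_K d_K=d_A$, and monotonicity upgrades this to $\lim_{K\uparrow A}d_K=d_A$.

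Strong convergence then follows from the parallelogram identity in $\mathcal E$. Since $\mathcal E'(A)$ is convex and contains both $\mu^K$ and $\mu^A$, the midpoint $\tfrac12(\mu^K+\mu^A)$ lies in $\mathcal E'(A)$, whence $\|\mu-\tfrac12(\mu^K+\mu^A)\|\geqslant d_A$; substituting into the identity $d_K^2+d_A^2=2\|\mu-\tfrac12(\mu^K+\mu^A)\|^2+\tfrac12\|\mu^K-\mu^A\|^2$ yields $\tfrac12\|\mu^K-\mu^A\|^2\leqslant d_K^2-d_A^2\to0$. Hence $\mu^K\to\mu^A$ strongly in $\mathcal E^+$, and vague convergence is immediate because the kernel is perfect, so the strong topology on $\mathcal E^+$ is finer than the vague one. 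For the potentials I would invoke the principle of descent: by the vague lower semicontinuity of $\nu\mapsto U^\nu(x)$ (Lemma~\ref{lemma-semi}) and the vague convergence $\mu^K\to\mu^A$, one has $\liminf_{K\uparrow A}U^{\mu^K}(x)\geqslant U^{\mu^A}(x)$ for every $x\in X$; combined with the pointwise bound $U^{\mu^K}\leqslant U^{\mu^A}$ and the monotonicity from the first paragraph, this forces $U^{\mu^K}\uparrow U^{\mu^A}$ pointwise on $X$.

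I expect the density/exhaustion step to be the main obstacle: everything else is standard pre-Hilbert geometry together with lower semicontinuity, but the identification $\lim_K d_K=d_A$ hinges on the fact that the strong closure $\mathcal E'(A)$ of $\mathcal E^+(A)$ is exhausted by the compact sub-cones $\mathcal E^+(K)$, which in turn rests on the strong approximation $\nu|_K\to\nu$ furnished by Lemma~\ref{222}.
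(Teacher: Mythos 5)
Note first that this paper contains no proof of Theorem~\ref{th-bal-cont} to compare against: the result is imported verbatim from \cite[Theorem~4.8]{Z-arx1}. Your proposal is therefore best judged as a self-contained derivation from the toolkit this paper actually quotes, namely the projection characterization (\ref{opr'}) of Theorem~\ref{th-intr}, the identity $\mathcal E'(K)=\mathcal E^+(K)$ for compact $K$ (Theorem~\ref{l-quasi}), Lemma~\ref{222}, and the perfectness of $\kappa$ --- and as such its architecture is sound. The monotonicity of the distances $d_K$ and of the potentials $U^{\mu^K}$ is correctly extracted from the nested cones $\mathcal E^+(K_1)\subset\mathcal E^+(K_2)\subset\mathcal E'(A)$ and, dually, from the minimum-potential characterization together with (\ref{eq-pr-10}); the identification $\inf_K d_K=d_A$ via strong density of $\bigcup_{K\in\mathfrak C_A}\mathcal E^+(K)$ in $\mathcal E'(A)$, the parallelogram estimate $\|\mu^K-\mu^A\|^2\leqslant 2(d_K^2-d_A^2)$, and the final squeeze of $U^{\mu^K}$ between the monotone upper bound $U^{\mu^A}$ and the descent-principle $\liminf$ (Lemma~\ref{lemma-semi}, Remark~\ref{fIII'}) are all valid. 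This is a legitimate, essentially complete projection-theoretic proof of both assertions.

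The one step that does not hold at the paper's level of generality is discarding the cross term: $I(\nu|_K,\nu|_{A\setminus K})\geqslant0$ requires $\kappa\geqslant0$. Under the paper's conventions this is automatic unless $X$ is compact, but for compact $X$ the kernel may take negative values, and strict positive definiteness does not prevent the mutual energy of two positive measures from being negative; so the inequality $I(\nu|_{A\setminus K})\leqslant I(\nu)-I(\nu|_K)$ can fail as written. The repair is the standard reduction of Remark~\ref{fIII}: put $\kappa':=\kappa+q\geqslant0$, whose energy satisfies $I'(\sigma)=I(\sigma)+q\,\sigma(X)^2$. Your estimate applied to $\kappa'$ (Lemma~\ref{222} holds for $\kappa'$ as well, cf.\ Remark~\ref{fIII'}) yields $I'(\nu|_{A\setminus K})\to0$, while $\nu(A\setminus K)=\nu(X)-\nu(K)\to0$ by inner regularity of the Radon measure $\nu$, it being concentrated on $A$; hence
\[
\|\nu-\nu|_K\|^2=I(\nu|_{A\setminus K})=I'(\nu|_{A\setminus K})-q\,\nu(A\setminus K)^2\to0,
\]
so the density step --- and with it the whole proof, whose remaining ingredients are insensitive to the sign of $\kappa$ --- goes through for every kernel admitted by the paper.
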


We complete this brief review of the theory of inner balayage with the following useful properties of inner swept measures (see  \cite[Propositions~7.2, 7.4]{Z-arx1}, cf.\ \cite{Z-arx1-err}).

\begin{proposition}[{\rm Balayage "with a rest"}\footnote{Regarding the terminology used here, we follow N.S.~Landkof \cite[p.~264]{L}.}
]\label{cor-rest}If\/ $A\subset Q$, then for any\/ $\mu\in\mathcal E^+$,
\begin{equation*}\label{eq-rest}\mu^A=(\mu^Q)^A.\end{equation*}
\end{proposition}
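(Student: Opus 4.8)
The plan is to invoke the uniqueness clause of Theorem~\ref{th-intr}: the inner balayage $\mu^A$ is the \emph{only} measure in $\mathcal E'(A)$ whose potential agrees with $U^\mu$ nearly everywhere on $A$, i.e.\ the only element of $\mathcal E'(A)$ satisfying (\ref{eq-pr-10}). Consequently, to prove $(\mu^Q)^A=\mu^A$ it suffices to verify that the candidate measure $(\mu^Q)^A$ lies in $\mathcal E'(A)$ and reproduces $U^\mu$ n.e.\ on $A$.

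First I would observe that the right-hand side is well defined: by Theorem~\ref{th-intr}, $\mu^Q\in\mathcal E'(Q)\subset\mathcal E^+$, so the inner balayage of $\mu^Q$ onto $A$ makes sense, and by the very definition of inner balayage one has $(\mu^Q)^A\in\mathcal E'(A)$. This settles the membership requirement with no further work.

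For the potential identity I would simply chain two instances of property (\ref{eq-pr-10}). Applying it to the sweeping of $\mu^Q$ onto $A$ gives $U^{(\mu^Q)^A}=U^{\mu^Q}$ n.e.\ on $A$, while applying it to the sweeping of $\mu$ onto $Q$ gives $U^{\mu^Q}=U^\mu$ n.e.\ on $Q$. Since $A\subset Q$, this second relation in particular holds n.e.\ on $A$, and concatenating the two equalities yields $U^{(\mu^Q)^A}=U^\mu$ n.e.\ on $A$, as required. The uniqueness part of Theorem~\ref{th-intr} then forces $(\mu^Q)^A=\mu^A$.

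The sole point demanding care is the bookkeeping of the two exceptional sets of inner capacity zero: one sits in $A$ (arising from sweeping $\mu^Q$ onto $A$), while the other is the trace on $A$ of an inner-capacity-zero subset of $Q$. By monotonicity of ${\rm cap}_*(\cdot)$ the latter trace is again of inner capacity zero, and by finite subadditivity (Lemma~\ref{str}) the union of the two exceptional sets is of inner capacity zero as well; hence the combined identity genuinely holds n.e.\ on $A$. I do not anticipate any substantive obstacle here—the argument is a direct transitivity computation built on the characterizing property (\ref{eq-pr-10}), the only subtlety being the routine capacity bookkeeping just described.
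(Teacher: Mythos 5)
Your proof is correct, but note that the paper itself contains no proof of Proposition~\ref{cor-rest}: it is quoted from \cite[Propositions~7.2, 7.4]{Z-arx1}, so there is no in-paper argument to compare against line by line. Your derivation, however, uses only material the paper does provide, and every step checks out: $(\mu^Q)^A\in\mathcal E'(A)$ by Theorem~\ref{th-intr} applied to $\mu^Q$ and $A$; $U^{(\mu^Q)^A}=U^{\mu^Q}$ n.e.\ on $A$ and $U^{\mu^Q}=U^\mu$ n.e.\ on $Q$, hence n.e.\ on $A$ by monotonicity of inner capacity; and the uniqueness clause of Theorem~\ref{th-intr} (property (\ref{eq-pr-10}) determines the balayage uniquely within $\mathcal E'(A)$) then forces $(\mu^Q)^A=\mu^A$. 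Your capacity bookkeeping is also sound, and it is genuinely needed, since inner capacity is not subadditive on arbitrary sets: both exceptional sets are traces on $A$ of Borel (hence universally measurable) sets, the potentials involved being l.s.c.\ by (\ref{phi}), so Lemma~\ref{str} applies and their union has inner capacity zero. This transitivity argument via the n.e.\ characterization of balayage is the natural one here (the orthogonal-projection characterization (\ref{opr'}) would not give transitivity directly, since projections onto nested convex cones do not compose in general), and it is presumably close to the argument in the cited source.
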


\begin{proposition}\label{cor-mass} If Frostman's maximum principle is fulfilled, then\/\footnote{It is worth noting that, if moreover $\kappa\geqslant0$ and ${\rm cap}_*(A)<\infty$, or if $A$ is compact, then actually $\mu^A(X)=\int U^\mu\,d\gamma_A$. See \cite{Z-arx1} (Eq.~(7.3) and Proposition~7.6), cf.\ \cite{Z-arx1-err}.}
\begin{equation}\label{eq-mass}\mu^A(X)\leqslant\mu(X)\text{ \ for any\/ $\mu\in\mathcal E^+$ and\/ $A\subset X$}.\end{equation}
\end{proposition}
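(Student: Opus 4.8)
The plan is to first reduce to the case of compact $A$, and then to estimate the mass of the swept measure by testing it against the inner capacitary measure $\gamma_K$, exploiting Frostman's maximum principle.

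\emph{Reduction to compact sets.} By Theorem~\ref{th-bal-cont}, $\mu^K\to\mu^A$ vaguely as $K\uparrow A$. Since the total mass $\nu\mapsto\nu(X)=\int 1\,d\nu$ is vaguely lower semicontinuous on $\mathfrak M^+$ (Lemma~\ref{lemma-semi} applied to the l.s.c.\ function $g\equiv1$), this yields
\[\mu^A(X)\leqslant\liminf_{K\uparrow A}\,\mu^K(X).\]
It therefore suffices to prove the asserted inequality for every compact $K\subset A$; indeed, once $\mu^K(X)\leqslant\mu(X)$ is established for all such $K$, the displayed $\liminf$ is $\leqslant\mu(X)$ as well.

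\emph{The compact case.} Fix a compact $K$. If ${\rm cap}_*(K)=0$, then $\mathcal E^+(K)=\{0\}$ by Lemma~\ref{l-negl1}, whence $\mu^K=0$ and there is nothing to prove. Otherwise $0<{\rm cap}_*(K)<\infty$, so the inner capacitary measure $\gamma_K$ is at our disposal (Theorem~\ref{prop.1.2'}). Frostman's maximum principle gives $U^{\gamma_K}\leqslant1$ on all of $X$ (as $U^{\gamma_K}\leqslant1$ on $S(\gamma_K)$), together with $U^{\gamma_K}=1$ n.e.\ on $K$ by (\ref{frr}). Since $\mu^K\in\mathcal E^+(K)$ is a bounded measure of finite energy concentrated on $K$, it does not charge the set of those $x\in K$ with $U^{\gamma_K}(x)\neq1$, this set being of inner capacity zero (Lemma~\ref{l-negl}); hence $U^{\gamma_K}=1$ holds $\mu^K$-a.e.

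\emph{The mass estimate.} Combining these facts with the symmetry of the kernel (finiteness of all mutual energies making Fubini applicable) and the inequality $U^{\mu^K}\leqslant U^\mu$ on $X$ from Theorem~\ref{th-intr}, I would compute
\[\mu^K(X)=\int U^{\gamma_K}\,d\mu^K=\int U^{\mu^K}\,d\gamma_K\leqslant\int U^\mu\,d\gamma_K=\int U^{\gamma_K}\,d\mu\leqslant\mu(X),\]
the last step again using $U^{\gamma_K}\leqslant1$. This settles the compact case, and the reduction above then completes the proof.

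\emph{Main obstacle.} The tempting shortcut is to invoke the principle of positivity of mass (Theorem~\ref{pr-pos}) directly from $U^{\mu^A}\leqslant U^\mu$ n.e.\ on $X$; but that theorem additionally requires $X$ to be $\sigma$-compact, which is not assumed here. The whole point of routing the argument through $\gamma_K$ is to dispense with $\sigma$-compactness, and the one delicate step is the passage from the n.e.\ identity $U^{\gamma_K}=1$ to its $\mu^K$-a.e.\ version, which hinges on the fact (Lemma~\ref{l-negl}) that a bounded finite-energy measure ignores sets of inner capacity zero.
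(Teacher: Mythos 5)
Your proof is correct. Note that the paper itself gives no proof of Proposition~\ref{cor-mass}: it is imported by citation from \cite{Z-arx1} (Propositions~7.2 and 7.4), and the only internal hint at the mechanism is the footnote stating that $\mu^A(X)=\int U^\mu\,d\gamma_A$ when $A$ is compact (Eq.~(7.3) of \cite{Z-arx1}). Your argument reconstructs exactly that mechanism: first the exhaustion step via Theorem~\ref{th-bal-cont} combined with the vague lower semicontinuity of total mass (the same device the paper uses in Remark~\ref{tmass'}), and then, on a compact $K$, the duality
\[\mu^K(X)=\int U^{\gamma_K}\,d\mu^K=\int U^{\mu^K}\,d\gamma_K\leqslant\int U^\mu\,d\gamma_K=\int U^{\gamma_K}\,d\mu\leqslant\mu(X),\]
with Frostman's principle entering through (\ref{frr}) and through the global bound $U^{\gamma_K}\leqslant1$ on $X$. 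The delicate points are all handled properly: the degenerate case ${\rm cap}_*(K)=0$ via Lemma~\ref{l-negl1}; the transfer of the n.e.\ identity (\ref{frr}) to a $\mu^K$-a.e.\ identity via Lemma~\ref{l-negl}, which is legitimate because $\mu^K\in\mathcal E^+(K)$ is bounded and of finite energy; and your closing remark is apt --- Theorem~\ref{pr-pos} is indeed unavailable for want of $\sigma$-compactness, which is precisely why the detour through the capacitary measure is needed. Two minor observations: keeping equality in the middle step (using $U^{\mu^K}=U^\mu$ n.e.\ on $K$ from (\ref{eq-pr-10}), together with the fact that $\gamma_K\in\mathcal E^+(K)$ charges no set of inner capacity zero) would recover the exact identity $\mu^K(X)=\int U^\mu\,d\gamma_K$ of the cited source; and for the first equality of your display the one-sided bound $U^{\gamma_K}\geqslant1$ n.e.\ on $K$ from (\ref{pr0}) already suffices, so (\ref{frr}) could be dispensed with, Frostman's principle being needed only for $U^{\gamma_K}\leqslant1$ everywhere on $X$.
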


\section{Preliminary results on Problem~\ref{pr-main}}\label{sec-ext}

Keeping the (permanent) assumption of the energy principle, in the present section we do not require a kernel $\kappa$ to be perfect.

Fix an external field $f$ admitting the
representation\footnote{In particular, (\ref{ff}) necessarily holds for $f$ of form (\ref{fform}). In fact, in view of (\ref{phi}) one can take $f_1:=\psi+U^{\vartheta^+}$ and $f_2:=U^{\vartheta^-}$, where $\vartheta^+$ and $\vartheta^-$ denote the positive and negative parts of $\vartheta\in\mathcal E$ in the Hahn--Jor\-dan decomposition. The required inequality $\int U^{\vartheta^-}\,d\mu<\infty$ is indeed fulfilled for all $\mu\in\mathcal E^+$ since, by virtue of the positive definiteness of the kernel, $\mathcal E=\mathcal E^+-\mathcal E^+$ (see \cite[Section~3.1]{F1}).\label{FFF}}
\begin{equation}\label{ff}
 f=f_1-f_2,\text{ \ where $f_1,f_2\in\Phi(X)$ and $\int f_2\,d\mu<\infty$ for all $\mu\in\mathcal E^+$.}
\end{equation}
Observing with the aid of Lemma~\ref{l-negl1} that then necessarily $f_2<\infty$ n.e.\ on $X$, we infer that such $f$ is well defined and takes values in $(-\infty,\infty]$ n.e.\ on $X$. Also note that $f$ is {\it universally measurable}, i.e.\ $\nu$-meas\-ur\-able for all $\nu\in\mathfrak M^+$.

For any $\mu\in\mathcal E^+$, define the {\it $f$-wei\-ghted potential\/} $U_f^\mu$ by means of the formula
\[U_f^\mu:=U^\mu+f;\]
then $U_f^\mu$ is well defined and takes values in $(-\infty,\infty]$ n.e.\ on $X$. This follows from the fact that the same holds true for both $U^\mu$ and $f$, by making use of the countable subadditivity of inner capacity on universally measurable sets (Lemma~\ref{str}).

Let $\mathcal E^+_f$ stand for the (convex) class of all $\mu\in\mathcal E^+$ such that the function $f$ (or, equivalently, $f_1$) is $\mu$-in\-t\-eg\-r\-able. Then for every $\mu\in\mathcal E^+_f$, the $f$-wei\-ghted energy $I_f(\mu)$, introduced by (\ref{fen}), is finite and, by Lebesgue--Fubini's theorem \cite[Section~V.8, Theorem~1]{B2}, is representable in the form
\[I_f(\mu)=\int\bigl(U_f^\mu+f\bigr)\,d\mu\in(-\infty,\infty).\]

For arbitrary $A\subset X$, denote
\[\mathcal E^+_f(A):=\mathcal E^+_f\cap\mathfrak M^+(A),\quad\breve{\mathcal E}^+_f(A):=\mathcal E^+_f\cap\breve{\mathfrak M}^+(A),\] and let $w_f(A)$ stand for the infimum of $I_f(\mu)$, where $\mu$ ranges over $\breve{\mathcal E}^+_f(A)$, see (\ref{wf}).
If this infimum is finite, see (\ref{fin}),\footnote{It follows immediately from (\ref{fin}) that ${\rm cap}_*(A)>0$ (for if not, $\breve{\mathcal E}^+(A)=\varnothing$ by Lemma~\ref{l-negl1}). Actually, even a stronger assertion then necessarily holds --- see Lemma~\ref{l-aux-2}.}
one can consider the inner Gauss variational problem on the existence of  $\lambda=\lambda_{A,f}\in\breve{\mathcal E}^+_f(A)$ with $I_f(\lambda_{A,f})=w_f(A)$, see Problem~\ref{pr-main}.

The present section contains preliminary results on Problem~\ref{pr-main}, some of them being established in a much more general form in the author's earlier paper \cite{Z5a}.

To begin with, we note that {\it a solution\/ $\lambda_{A,f}$ to Problem\/~{\rm\ref{pr-main}} is unique\/} (if it exists), which follows easily from the convexity of the class $\breve{\mathcal E}^+_f(A)$ and the energy principle, by use of the parallelogram identity in the pre-Hil\-bert space $\mathcal E$  (cf.\ \cite[Lemma~6]{Z5a}).

\begin{lemma}[{\rm cf.\ \cite[Lemma~4]{Z5a}}]\label{l-aux-1} Given\/ $\kappa$, $A$, and\/ $f$,
\begin{equation}\label{l-mon}
w_f(K)\downarrow w_f(A)\text{ \ as\/ $K\uparrow A$}.
\end{equation}
\end{lemma}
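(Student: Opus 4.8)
The plan is to prove the two halves of $w_f(K)\downarrow w_f(A)$ separately: first the easy monotonicity together with the lower bound $w_f(A)\le\lim_{K\uparrow A}w_f(K)$, and then the harder reverse inequality by approximating an almost-minimizer on $A$ by its normalized restrictions to compact subsets. For the easy half, observe that whenever $K_1\subset K_2$ are compact subsets of $A$ we have $\breve{\mathcal E}^+_f(K_1)\subset\breve{\mathcal E}^+_f(K_2)\subset\breve{\mathcal E}^+_f(A)$, since a measure concentrated on a smaller set is concentrated on the larger one, while unit mass, finite energy, and $f$-integrability are all preserved. Taking the infimum of $I_f$ over these nested classes shows that $(w_f(K))_{K\in\mathfrak C_A}$ is a decreasing net with $w_f(A)\le w_f(K)$ for every $K$. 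Consequently $\lim_{K\uparrow A}w_f(K)=\inf_{K\in\mathfrak C_A}w_f(K)\ge w_f(A)$, and it remains only to establish the opposite inequality $\inf_{K}w_f(K)\le w_f(A)$.

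Assume first that $w_f(A)$ is finite, and fix $\varepsilon>0$ and $\mu\in\breve{\mathcal E}^+_f(A)$ with $I_f(\mu)\le w_f(A)+\varepsilon$. For $K\in\mathfrak C_A$ write $c_K:=\mu(K)=\mu|_K(X)$; since $\mu$ is concentrated on $A$ with $\mu(X)=1$, the definition of inner measure gives $c_K\uparrow\mu_*(A)=1$ as $K\uparrow A$, so $c_K>0$ for all large $K$. For such $K$ set $\mu_K:=c_K^{-1}\,\mu|_K$. Then $\mu_K\in\breve{\mathfrak M}^+(K)$; moreover $f$ is $\mu|_K$-integrable (being $\mu$-integrable and $\mu|_K\le\mu$) while $I(\mu|_K)\le I(\mu)<\infty$, whence $\mu_K\in\breve{\mathcal E}^+_f(K)$ and therefore $w_f(K)\le I_f(\mu_K)$.

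The crux is to show $I_f(\mu_K)\to I_f(\mu)$ as $K\uparrow A$. Writing out $I_f(\mu_K)=c_K^{-2}\,I(\mu|_K)+2c_K^{-1}\int f\,d\mu|_K$, it suffices to pass to the limit in each factor. Here $c_K\to1$ as just noted; $I(\mu|_K)\to I(\mu)$ by Lemma~\ref{222}; and, using the representation $f=f_1-f_2$ from (\ref{ff}) with $f_1,f_2\in\Phi(X)$ both $\mu$-integrable, Lemma~\ref{l-lower} (in the form valid for any $g\in\Phi(X)$, cf.\ Remark~\ref{fIII'}) applied to $f_1$ and $f_2$ yields $\int f\,d\mu|_K=\int f_1\,d\mu|_K-\int f_2\,d\mu|_K\to\int f\,d\mu$. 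Combining these limits gives $I_f(\mu_K)\to I_f(\mu)\le w_f(A)+\varepsilon$, so that $w_f(K)\le w_f(A)+2\varepsilon$ for all $K$ large enough; since the net $(w_f(K))$ is decreasing, this forces $\inf_K w_f(K)\le w_f(A)+2\varepsilon$, and letting $\varepsilon\downarrow0$ completes the finite case.

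The two degenerate cases are immediate. If $w_f(A)=+\infty$ then $\breve{\mathcal E}^+_f(A)=\varnothing$, whence $\breve{\mathcal E}^+_f(K)=\varnothing$ and $w_f(K)=+\infty$ for every $K$, so the net is constant. If $w_f(A)=-\infty$, the same normalization argument, now applied to measures $\mu\in\breve{\mathcal E}^+_f(A)$ with $I_f(\mu)\le-M$, produces for each $M$ a cofinal family of $K$ with $w_f(K)\le I_f(\mu_K)\le-M+1$, so $\inf_K w_f(K)=-\infty$ as well. I expect the sole genuine difficulty to lie in the weighted-energy convergence $I_f(\mu_K)\to I_f(\mu)$ — specifically in justifying $\int f\,d\mu|_K\to\int f\,d\mu$ for the possibly sign-indefinite, non-l.s.c.\ field $f$ — which is precisely where the decomposition (\ref{ff}) and Lemma~\ref{l-lower} are brought to bear; the rescaling by $c_K$, needed because the restrictions $\mu|_K$ have mass $<1$, then contributes only elementary algebra.
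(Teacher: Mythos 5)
Your proof is correct and follows essentially the same route as the paper's: the paper derives (\ref{l-mon}) from the restriction-continuity relation (\ref{contf}), $I_f(\mu)=\lim_{K\uparrow A}I_f(\mu|_K)$, which it obtains---exactly as in your crux---by applying Lemma~\ref{l-lower} (via Remark~\ref{fIII'}) to each of the $\mu$-integrable functions $\kappa$, $f_1$, and $f_2$ from (\ref{ff}), together with Lemma~\ref{222} for the energy term. The normalization $\mu_K:=c_K^{-1}\mu|_K$ with $c_K=\mu(K)\uparrow\mu_*(A)=1$, the monotonicity of the net $(w_f(K))$, and the degenerate cases, all of which you spell out, are precisely the details the paper leaves implicit in its citation of \cite[Lemma~4]{Z5a}.
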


In fact, (\ref{l-mon}) can be derived from the relation
\begin{equation}\label{contf}
I_f(\mu)=\lim_{K\uparrow A}\,I_f(\mu|_K)\text{ \ for all $\mu\in\mathcal E^+_f(A)$},
\end{equation}
which in turn follows by applying Lemma~\ref{l-lower} to each of the $\mu$-integrable functions $\kappa$, $f_1$, and $f_2$ (cf.\ Remark~\ref{fIII'}), $f_1$ and $f_2$ appearing in (\ref{ff}). Formula (\ref{contf}) also implies that the extremal value
$w_f(A)$ would be the same if the admissible measures $\mu$ in (\ref{wf}) were required to be of compact support $S(\mu)\subset A$ (cf.\ \cite[Remark~2]{Z5a}).

\begin{lemma}[{\rm cf.\ \cite[Lemma~5]{Z5a}}]\label{l-aux-2} $w_f(A)<\infty$ holds if and only if
\begin{equation}\label{iv}
{\rm cap}_*\bigl(\{x\in A:\ f_1(x)<\infty\}\bigr)>0.
\end{equation}
\end{lemma}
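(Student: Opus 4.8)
The plan is to prove the two implications of the equivalence separately, writing throughout $B:=\{x\in A:\ f_1(x)<\infty\}$ so that the right-hand side of the claim reads ${\rm cap}_*(B)>0$, and recalling that $f=f_1-f_2$ with $f_1,f_2\in\Phi(X)$ as in \eqref{ff}. The substantive direction is the sufficiency, where one must manufacture a single admissible measure of finite $f$-weighted energy out of the capacity hypothesis; the necessity is essentially a one-line consequence of Lemma~\ref{l-negl1}. For the necessity, I would start from the remark (forced by the convention that an infimum over the empty set is $+\infty$) that $w_f(A)<\infty$ entails $\breve{\mathcal E}^+_f(A)\neq\varnothing$, and fix some $\mu$ in this class. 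Since $f_1$ is then $\mu$-integrable and takes values in $(-\infty,\infty]$, the $G_\delta$ (hence Borel, hence $\mu$-measurable) set $\{f_1=\infty\}$ is $\mu$-negligible. As $\mu$ is concentrated on $A$ and $\{f_1=\infty\}$ is $\mu$-negligible, the complement $B^c=A^c\cup\{f_1=\infty\}$ is locally $\mu$-negligible, so $\mu$ is concentrated on $B$; thus $\mu\in\mathcal E^+(B)$ while $\mu(X)=1\neq0$. Hence $\mathcal E^+(B)\neq\{0\}$, and Lemma~\ref{l-negl1} gives ${\rm cap}_*(B)>0$.

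For the sufficiency I assume ${\rm cap}_*(B)>0$ and aim to exhibit $\nu\in\breve{\mathcal E}^+_f(A)$ with $I_f(\nu)<\infty$, which already yields $w_f(A)\leqslant I_f(\nu)<\infty$. The key reduction is to pass to a level set on which $f_1$ is bounded above. Each set $\{f_1\leqslant m\}$ is closed, since $f_1$ is l.s.c., hence universally measurable, and $B=\bigcup_{m\in\mathbb N}\bigl(A\cap\{f_1\leqslant m\}\bigr)$. Applying the strengthened countable subadditivity of inner capacity (Lemma~\ref{str}) with $U_m:=\{f_1\leqslant m\}$, I obtain ${\rm cap}_*(B)\leqslant\sum_{m}{\rm cap}_*(A\cap\{f_1\leqslant m\})$; since the left-hand side is positive, some $B_m:=A\cap\{f_1\leqslant m\}$ has ${\rm cap}_*(B_m)>0$. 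By Lemma~\ref{l-negl1} there is then a compact $K\subset B_m$ with $\mathcal E^+(K)\neq\{0\}$, and normalizing any nonzero element of $\mathcal E^+(K)$ produces a measure $\nu\in\breve{\mathcal E}^+(K)\subset\breve{\mathcal E}^+(A)$ of finite energy.

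It remains to verify that this $\nu$ belongs to $\breve{\mathcal E}^+_f(A)$ and has finite weighted energy. On the compact set $K\subset\{f_1\leqslant m\}$ the l.s.c.\ function $f_1$ is bounded below (lower semicontinuity on a compact set) and bounded above by $m$, so $\int f_1\,d\nu$ is finite; hence $f_1$ is $\nu$-integrable and $\nu\in\mathcal E^+_f$. Moreover $\int f_2\,d\nu<\infty$ by the defining property of $f_2$ in \eqref{ff}, which holds for every measure in $\mathcal E^+$. Consequently $I_f(\nu)=I(\nu)+2\int f_1\,d\nu-2\int f_2\,d\nu$ is finite, and the sufficiency follows. (The argument mirrors \cite[Lemma~5]{Z5a}, as the statement indicates.)

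I expect the only genuine obstacle to lie in the sufficiency, and more precisely in the level-set reduction: one has to observe that the sets $\{f_1\leqslant m\}$ are universally measurable so that Lemma~\ref{str} is applicable, and then secure a finite-energy measure of unit mass supported on a region where $f_1$ is bounded, so that its weighted energy stays finite. Both of these are handled by lower semicontinuity of $f_1$ together with Lemma~\ref{l-negl1}. The necessity direction, by contrast, is routine, resting solely on the fact that an integrable function is finite almost everywhere and on the characterization of inner capacity zero in Lemma~\ref{l-negl1}.
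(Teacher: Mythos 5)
Your proof is correct. The paper itself gives no argument for this lemma, deferring entirely to \cite[Lemma~5]{Z5a}; your two-sided argument --- necessity by observing that $\mu$-integrability of $f_1$ forces $\{f_1=\infty\}$ to be $\mu$-negligible, so that any $\mu\in\breve{\mathcal E}^+_f(A)$ is a nonzero element of $\mathcal E^+\bigl(\{x\in A:\ f_1(x)<\infty\}\bigr)$ and Lemma~\ref{l-negl1} applies, and sufficiency via the level-set decomposition $\{f_1\leqslant m\}$, Lemma~\ref{str}, Lemma~\ref{l-negl1}, and a normalized finite-energy measure on a compact set where the l.s.c.\ function $f_1$ is bounded --- is the natural route and fills in, consistently with the paper's own lemmas, exactly what the citation leaves implicit.
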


\begin{lemma}\label{l-aux-3}Assume an external field\/ $f$ is of form\/ {\rm(\ref{fform})}, that is, $f=\psi+U^\vartheta$, where\/ $\psi\in\Phi(X)$ and\/ $\vartheta\in\mathcal E$. Then\/ {\rm(\ref{fin})} is equivalent to\/ $w_f(A)<\infty$, and hence to
\begin{equation}\label{iv;}
{\rm cap}_*\bigl(\{x\in A:\ \psi(x)<\infty\}\bigr)>0.
\end{equation}
\end{lemma}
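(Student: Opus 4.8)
The plan is to observe that the genuine content of the lemma, beyond what Lemma~\ref{l-aux-2} already delivers, is the assertion that for $f$ of form (\ref{fform}) the lower bound $w_f(A)>-\infty$ holds \emph{automatically}, so that condition (\ref{fin}) collapses to the single requirement $w_f(A)<\infty$. Indeed, (\ref{fin}) trivially forces $w_f(A)<\infty$; conversely, once $w_f(A)>-\infty$ is known to hold unconditionally, the bound $w_f(A)<\infty$ is, together with it, precisely (\ref{fin}). Thus I would first establish a lower bound on $I_f(\mu)$ that is uniform over all admissible $\mu\in\breve{\mathcal E}^+_f(A)$.

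To this end, fix $\mu\in\breve{\mathcal E}^+_f(A)$ and split $f=\psi+U^\vartheta$. Writing $\vartheta=\vartheta^+-\vartheta^-$ with $\vartheta^\pm\in\mathcal E^+$, the mutual energies $\langle\mu,\vartheta^\pm\rangle$ are finite by the Cauchy--Schwarz inequality in $\mathcal E$, so $\int U^\vartheta\,d\mu=\langle\mu,\vartheta\rangle$, whence
\[
I_f(\mu)=\|\mu\|^2+2\langle\mu,\vartheta\rangle+2\int\psi\,d\mu=\|\mu+\vartheta\|^2-\|\vartheta\|^2+2\int\psi\,d\mu.
\]
Here $\|\mu+\vartheta\|^2\geq0$ by positive definiteness, so it remains only to bound $\int\psi\,d\mu$ from below. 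If $X$ is noncompact, then $\psi\geq0$ by the definition of $\Phi(X)$, hence $\int\psi\,d\mu\geq0$; if $X$ is compact, then $\psi$, being l.s.c.\ on a compact space, satisfies $\psi\geq-c$ for some $c\in[0,\infty)$, and since $\mu(X)=1$ we obtain $\int\psi\,d\mu\geq-c$. In either case $I_f(\mu)\geq-\|\vartheta\|^2-2c$ with a constant independent of $\mu$, so passing to the infimum yields $w_f(A)\geq-\|\vartheta\|^2-2c>-\infty$, as needed.

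It then remains to identify $w_f(A)<\infty$ with (\ref{iv;}). By the construction accompanying (\ref{ff}) one may take $f_1:=\psi+U^{\vartheta^+}$, so Lemma~\ref{l-aux-2} gives $w_f(A)<\infty$ if and only if ${\rm cap}_*(\{x\in A:\ f_1(x)<\infty\})>0$. Since $\vartheta^+\in\mathcal E^+$, its potential $U^{\vartheta^+}$ is finite n.e.\ on $X$, so the sets $\{x\in A:\ f_1(x)<\infty\}$ and $\{x\in A:\ \psi(x)<\infty\}$ differ only by a set of inner capacity zero; the strengthened countable subadditivity of Lemma~\ref{str} then shows that the two inner capacities vanish, or are positive, simultaneously, which converts the criterion into (\ref{iv;}). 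I expect no serious obstacle in this argument: the only delicate point is the compact case, where $\psi$ need not be nonnegative and one must invoke its lower semicontinuity on a compact space together with the normalization $\mu(X)=1$ in order to keep the lower bound both finite and uniform in $\mu$.
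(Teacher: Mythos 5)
Your proposal is correct and follows essentially the same route as the paper's own proof: the lower bound $w_f(A)>-\infty$ via the identity $\|\mu+\vartheta\|^2-\|\vartheta\|^2$ and the energy principle, the treatment of $\int\psi\,d\mu$ split into the cases $\psi\geqslant0$ and compact $X$ (where lower semicontinuity plus $\mu(X)=1$ gives a uniform bound), and the reduction of the capacity criterion to Lemma~\ref{l-aux-2} with $f_1:=\psi+U^{\vartheta^+}$, using Lemma~\ref{str} and the finiteness of $U^{\vartheta^+}$ outside a set of capacity zero. The only cosmetic difference is that you fold the two lower bounds into a single display, whereas the paper estimates $I_{U^\vartheta}(\mu)$ and $\int\psi\,d\mu$ separately.
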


\begin{proof}The first part of the claim will follow once we show that
\begin{equation*}\label{-infty}
 w_f(A)>-\infty.
\end{equation*}
For any given $\vartheta\in\mathcal E$,
\begin{equation}\label{Isigma}
I_{U^\vartheta}(\mu)=\|\mu\|^2+2\int U^\vartheta\,d\mu=\|\mu+\vartheta\|^2-\|\vartheta\|^2\text{ \ for all $\mu\in\mathcal E^+$},
\end{equation}
and hence, by the energy principle,
\[w_{U^\vartheta}(A)\geqslant-\|\vartheta\|^2>-\infty.\]
It thus remains to prove that for any given $\psi\in\Phi(X)$,
\[\inf_{\mu\in\breve{\mathcal E}^+(A)}\,\int\psi\,d\mu>-\infty.\]
This however is obvious if $\psi\geqslant0$, while the remaining case of compact $X$ is treated as described in Remark~\ref{fIII'}, by use of the equality $\mu(X)=1$ for all $\mu\in\breve{\mathcal E}^+(A)$.\footnote{\label{Wfin}
The above proof also implies the following observation, to be useful in the sequel: {\it if an external field\/ $f$ is of form\/ {\rm(\ref{fform})}, then\/
$I_f(\mu)>-\infty$ for all\/ $\mu\in\mathcal E^+$.}}

In view of Lemma~\ref{l-aux-2}, the latter part of the claim is reduced to the equivalence of (\ref{iv}) and (\ref{iv;}), which however is obvious from Lemma~\ref{str} and the fact that the potential of a measure of finite energy is finite q.e.\ on $X$ (cf.\ footnote~\ref{FFF}).\end{proof}

\begin{corollary}\label{l-aux-4}Assume that\/ $f=U^\vartheta$, where\/ $\vartheta\in\mathcal E$. Then
\begin{equation}\label{Isigma'}
 \mathcal E^+_f(A)=\mathcal E^+(A).
\end{equation}
Furthermore, assumption\/ {\rm(\ref{fin})} is fulfilled if and only if\/ ${\rm cap}_*(A)>0$.
\end{corollary}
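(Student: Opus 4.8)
The plan is to treat the two assertions separately, as both reduce to facts already at hand. For the equality $\mathcal E^+_f(A)=\mathcal E^+(A)$, note first that the inclusion $\mathcal E^+_f(A)\subset\mathcal E^+(A)$ is immediate from the definitions, since $\mathcal E^+_f\subset\mathcal E^+$. Thus it suffices to establish the reverse inclusion, and in fact it is enough to show $\mathcal E^+_f=\mathcal E^+$, i.e.\ that $f=U^\vartheta$ is $\mu$-integrable for \emph{every} $\mu\in\mathcal E^+$; intersecting with $\mathfrak M^+(A)$ then delivers the claim.

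First I would decompose $\vartheta=\vartheta^+-\vartheta^-$ in the Hahn--Jordan sense. By the positive definiteness of $\kappa$ one has $\mathcal E=\mathcal E^+-\mathcal E^+$ (see \cite[Section~3.1]{F1}, cf.\ footnote~\ref{FFF}), so that $\vartheta^+,\vartheta^-\in\mathcal E^+$ and accordingly $U^\vartheta=U^{\vartheta^+}-U^{\vartheta^-}$ with $U^{\vartheta^\pm}\in\Phi(X)$. For any $\mu\in\mathcal E^+$, Lebesgue--Fubini applied to the positive integrands gives $\int U^{\vartheta^\pm}\,d\mu=I(\mu,\vartheta^\pm)=\langle\mu,\vartheta^\pm\rangle$, and each of these mutual energies is finite by the Cauchy--Schwarz inequality $|\langle\mu,\vartheta^\pm\rangle|\leqslant\|\mu\|\,\|\vartheta^\pm\|<\infty$ in the pre-Hilbert space $\mathcal E$. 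Hence $f=U^\vartheta$ is $\mu$-integrable, with $\int f\,d\mu=\langle\mu,\vartheta\rangle$ a genuine finite difference; this yields $\mathcal E^+_f=\mathcal E^+$ and therefore (\ref{Isigma'}).

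For the second assertion I would simply specialize Lemma~\ref{l-aux-3}. The field $f=U^\vartheta$ is of the form (\ref{fform}) with $\psi\equiv0$ and the given $\vartheta\in\mathcal E$, and $\psi\equiv0$ indeed belongs to $\Phi(X)$, being l.s.c.\ and nonnegative. Lemma~\ref{l-aux-3} then applies verbatim, so (\ref{fin}) is equivalent to $w_f(A)<\infty$, which in turn holds if and only if (\ref{iv;}) does, i.e.\ ${\rm cap}_*(\{x\in A:\ \psi(x)<\infty\})>0$. Since $\psi\equiv0$, the set appearing here is all of $A$, and the condition collapses to ${\rm cap}_*(A)>0$, as required.

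I do not anticipate a genuine obstacle: both parts amount to bookkeeping on top of earlier results. The only point meriting care is the well-definedness of $\int U^\vartheta\,d\mu$ as a finite difference of the two finite integrals $\int U^{\vartheta^\pm}\,d\mu$ (rather than an indeterminate $\infty-\infty$), which is exactly what the finiteness of the mutual energies $\langle\mu,\vartheta^\pm\rangle$ guarantees; together with the trivial but necessary observation that the constant field $\psi\equiv0$ lies in $\Phi(X)$, so that Lemma~\ref{l-aux-3} is legitimately applicable.
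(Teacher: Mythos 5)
Your proof is correct and follows essentially the same route as the paper: the paper's own argument for (\ref{Isigma'}) simply cites the identity (\ref{Isigma}), whose justification is exactly the Hahn--Jordan/Fubini/Cauchy--Schwarz reasoning you spell out (cf.\ footnote~\ref{FFF}), and the second assertion is likewise obtained there by applying Lemma~\ref{l-aux-3} with $\psi=0$.
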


\begin{proof}
Noting from (\ref{Isigma}) that $I_f(\mu)$ is finite for all $\mu\in\mathcal E^+$, we obtain (\ref{Isigma'}), while
 the latter part of the claim is an immediate consequence of Lemma~\ref{l-aux-3} with $\psi=0$.
\end{proof}

\subsection{Characteristic properties of $\lambda_{A,f}$}\label{sec-ch} In Theorems~\ref{th-ch1} and \ref{th-ch2}, assume again an external field $f$ to be of form (\ref{ff}). Also assume (\ref{fin}) to be fulfilled (see Lemmas~\ref{l-aux-2}, \ref{l-aux-3} and Corollary~\ref{l-aux-4} for necessary and/or sufficient conditions for this to hold).

\begin{theorem}[{\rm cf.\ \cite[Theorems~1, 2]{Z5a}}]\label{th-ch1}
If the solution\/ $\lambda=\lambda_{A,f}$ to Problem\/~{\rm\ref{pr-main}} exists, then its\/ $f$-wei\-ghted potential\/ $U_f^{\lambda}$ has the properties
\begin{align}
  U_f^{\lambda}&\geqslant c_{A,f}\text{ \ n.e.\ on\/ $A$},\label{t1-pr1}\\
  U_f^{\lambda}&=c_{A,f}\text{ \ $\lambda$-a.e.\ on\/ $X$},\label{t1-pr2}
\end{align}
where
\begin{equation}\label{cc}
c_{A,f}:=\int U_f^{\lambda}\,d\lambda=w_f(A)-\int f\,d\lambda\in(-\infty,\infty)
\end{equation}
is said to be the inner\/ $f$-weighted equilibrium constant for\/ $A$.

If moreover\/ $f\in\Phi(X)$, then also
\[U_f^{\lambda}\leqslant c_{A,f}\text{ \ on\/ $S(\lambda)$},\]
and hence
\[U_f^{\lambda}=c_{A,f}\text{ \ n.e.\ on\/ $S(\lambda)\cap A$}.\]
\end{theorem}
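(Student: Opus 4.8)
The plan is to read the two characteristic inequalities off the first-order optimality condition for the minimizer, and then to convert the resulting \emph{integral} inequality into the \emph{pointwise} (n.e.) inequality by a capacity argument. First I would record the variational inequality. Since $\breve{\mathcal E}^+_f(A)$ is convex and $\lambda$ minimizes $I_f$ over it, for any $\nu\in\breve{\mathcal E}^+_f(A)$ the measure $\lambda_t:=(1-t)\lambda+t\nu$ again lies in $\breve{\mathcal E}^+_f(A)$ for $t\in[0,1]$, and expanding the quadratic
\[I_f(\lambda_t)=I_f(\lambda)+2t\Bigl(\langle\lambda,\nu-\lambda\rangle+\int f\,d(\nu-\lambda)\Bigr)+t^2\|\nu-\lambda\|^2\]
shows that the linear coefficient must be nonnegative (otherwise $I_f(\lambda_t)<w_f(A)$ for small $t>0$). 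Rearranging and using $U^\lambda+f=U_f^\lambda$, this reads
\begin{equation*}
\int U_f^\lambda\,d\nu\ \geqslant\ \|\lambda\|^2+\int f\,d\lambda\ =\ c_{A,f}\qquad\text{for all }\nu\in\breve{\mathcal E}^+_f(A).\tag{$\star$}
\end{equation*}
That $c_{A,f}$ is finite is then immediate: $\|\lambda\|^2<\infty$ and $\int f\,d\lambda$ is finite because $\lambda\in\mathcal E^+_f$; the two expressions for $c_{A,f}$ in (\ref{cc}) agree since $w_f(A)=I_f(\lambda)=\|\lambda\|^2+2\int f\,d\lambda$.

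Next I would deduce (\ref{t1-pr1}) from $(\star)$, arguing by contradiction. Suppose $E:=\{x\in A:\ U_f^\lambda(x)<c_{A,f}\}$ has ${\rm cap}_*(E)>0$. Discarding the capacity-zero set $\{f_2=\infty\}$ (via Lemma~\ref{str}) and splitting the remainder according to the bounds $U_f^\lambda\leqslant c_{A,f}-1/j$ and $f_2\leqslant m$, the countable subadditivity of inner capacity (Lemma~\ref{str}) yields a universally measurable piece of positive inner capacity on which $U_f^\lambda$ is \emph{bounded} and bounded away from $c_{A,f}$; inside it I choose a compact $K_*$ with ${\rm cap}(K_*)>0$ (possible by Lemma~\ref{l-negl1} and (\ref{153})). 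Any finite-energy probability measure $\nu$ concentrated on $K_*$ (for instance the normalized inner capacitary measure of $K_*$, cf.\ Theorems~\ref{prop.1.2'}, \ref{prop.1.2''}) then belongs to $\breve{\mathcal E}^+_f(A)$: indeed $U^\lambda$ is $\nu$-integrable because $\int U^\lambda\,d\nu=\langle\lambda,\nu\rangle<\infty$, and since $U_f^\lambda$ is bounded on $K_*$, so is $f=U_f^\lambda-U^\lambda$ $\nu$-integrable. But then $\int U_f^\lambda\,d\nu\leqslant c_{A,f}-1/j<c_{A,f}$, contradicting $(\star)$. I expect this conversion of the integral inequality $(\star)$ into the n.e.\ inequality to be the main obstacle, precisely because $f=f_1-f_2$ need not be l.s.c.\ and one must localize to a subset on which $f$ is integrable against the test measure before invoking $(\star)$.

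Property (\ref{t1-pr2}) then follows quickly. Since $U_f^\lambda$ is universally measurable and $\lambda$ is bounded (hence every set is $\lambda$-$\sigma$-finite), the capacity-zero set of (\ref{t1-pr1}) is $\lambda$-negligible by Lemma~\ref{l-negl}, so $U_f^\lambda\geqslant c_{A,f}$ $\lambda$-a.e.; combined with $\int(U_f^\lambda-c_{A,f})\,d\lambda=0$ (which is exactly $\int U_f^\lambda\,d\lambda=c_{A,f}$ together with $\lambda(X)=1$) this forces $U_f^\lambda=c_{A,f}$ $\lambda$-a.e. Finally, if $f\in\Phi(X)$ then $U_f^\lambda=U^\lambda+f$ is l.s.c., so $\{U_f^\lambda>c_{A,f}\}$ is open and, being $\lambda$-null by (\ref{t1-pr2}), is disjoint from the support $S(\lambda)$; hence $U_f^\lambda\leqslant c_{A,f}$ on $S(\lambda)$. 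Together with (\ref{t1-pr1}) restricted to $S(\lambda)\cap A$ this yields $U_f^\lambda=c_{A,f}$ n.e.\ on $S(\lambda)\cap A$, completing the argument.
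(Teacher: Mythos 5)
Your proof is correct. Note that the paper itself contains no proof of this theorem: it is stated with a reference to \cite[Theorems~1, 2]{Z5a}, and your argument --- first-order optimality of $\lambda$ over the convex class $\breve{\mathcal E}^+_f(A)$ yielding the integral inequality $(\star)$, followed by the capacity-theoretic localization (splitting off $\{f_2=\infty\}$ and the level sets of $U_f^\lambda$ and $f_2$ via Lemma~\ref{str}, then testing against a finite-energy probability measure on a compact set of positive capacity) --- is precisely the classical variational route underlying the cited result, so there is nothing to contrast. One small caveat: Theorem~\ref{th-ch1} sits in Section~\ref{sec-ext}, where the kernel is assumed only strictly positive definite, \emph{not} perfect, so you should not lean on Theorems~\ref{prop.1.2'} and \ref{prop.1.2''} (stated under the perfectness assumption) for your test measure; this is harmless, however, since all you need is \emph{some} nonzero measure of finite energy concentrated on $K_*$, which Lemma~\ref{l-negl1} together with (\ref{153}) supplies directly, exactly as your main clause ("any finite-energy probability measure concentrated on $K_*$") already reflects.
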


Relations (\ref{t1-pr1}) and (\ref{cc}), resp.\ (\ref{t1-pr2}) and (\ref{cc}), characterize the minimizer $\lambda_{A,f}$ uniquely within $\breve{\mathcal E}^+_f(A)$. In more detail, the following theorem holds true.

\begin{theorem}[{\rm cf.\ \cite[Proposition~1]{Z5a}}]\label{th-ch2}For\/ $\mu\in\breve{\mathcal E}^+_f(A)$ to be the\/ {\rm(}unique\/{\rm)} solution\/ $\lambda_{A,f}$ to Problem\/~{\rm\ref{pr-main}}, it is necessary and sufficient that either of the following two characteristic inequalities be fulfilled:
\begin{align}U_f^\mu&\geqslant\int U_f^\mu\,d\mu=:c_1\text{ n.e.\ on\/ $A$},\label{1}\\
U_f^\mu&\leqslant w_f(A)-\int f\,d\mu=:c_2\text{ \ $\mu$-a.e.\ on\/ $X$.}\label{2}
\end{align}
If\/ {\rm(\ref{1})} or\/ {\rm(\ref{2})} holds true, then equality actually prevails in\/ {\rm(\ref{2})}, and moreover
\[c_1=c_2=c_{A,f},\] the inner\/ $f$-weighted equilibrium constant\/ $c_{A,f}$ being introduced by formula\/ {\rm(\ref{cc})}.
\end{theorem}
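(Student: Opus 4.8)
The plan is to exploit the fact that $I_f$ is, up to the linear term $2\int f\,d\mu$, the square of the energy norm, and that $\breve{\mathcal E}^+_f(A)$ is convex; thus minimality is governed by a first-order variational inequality which I shall match to the pointwise conditions (\ref{1}) and (\ref{2}). The necessity of (\ref{1}) (and of the equality form of (\ref{2})) for a minimizer is already furnished by Theorem~\ref{th-ch1}: if $\mu=\lambda_{A,f}$ then $c_1=\int U_f^\mu\,d\mu=c_{A,f}$ and $c_2=w_f(A)-\int f\,d\mu=c_{A,f}$ by (\ref{cc}), so (\ref{t1-pr1}) is exactly (\ref{1}) and (\ref{t1-pr2}) is (\ref{2}) with equality. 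Hence the substance of the proof is the two sufficiency implications together with the identification of the constants. Uniqueness need not be argued, being already recorded before Lemma~\ref{l-aux-1}.

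First I would record the elementary but crucial identity: for any $\nu\in\breve{\mathcal E}^+_f(A)$, expanding $\|\nu\|^2=\|\nu-\mu\|^2+2\langle\mu,\nu-\mu\rangle+\|\mu\|^2$ and regrouping the linear terms gives
\[I_f(\nu)-I_f(\mu)=\|\nu-\mu\|^2+2\Bigl(\int U_f^\mu\,d\nu-\int U_f^\mu\,d\mu\Bigr).\]
Here every integral is finite: $f$ is $\mu$- and $\nu$-integrable by definition of $\breve{\mathcal E}^+_f(A)$, while $\int U^\mu\,d\nu=\langle\mu,\nu\rangle\leqslant\|\mu\|\,\|\nu\|<\infty$ and $U^\mu\geqslant0$, so $U^\mu$ is $\nu$-integrable as well. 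Assuming (\ref{1}), I would integrate it against $\nu$: the exceptional set $E:=\{x\in A:\ U_f^\mu(x)<c_1\}$ has ${\rm cap}_*(E)=0$, is universally measurable, and is $\nu$-$\sigma$-finite since $\nu(X)=1<\infty$, so Lemma~\ref{l-negl} gives $\nu^*(E)=0$; consequently $\int U_f^\mu\,d\nu\geqslant c_1\,\nu(X)=c_1=\int U_f^\mu\,d\mu$. Feeding this into the identity yields $I_f(\nu)\geqslant I_f(\mu)$ for every admissible $\nu$, i.e.\ $I_f(\mu)=w_f(A)$, so $\mu$ is the (unique) minimizer.

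For (\ref{2}) I would argue through the two constants. A direct computation gives $c_1-c_2=\|\mu\|^2+2\int f\,d\mu-w_f(A)=I_f(\mu)-w_f(A)\geqslant0$, so always $c_1\geqslant c_2$. On the other hand, integrating (\ref{2}) against $\mu$ gives $c_1=\int U_f^\mu\,d\mu\leqslant c_2\,\mu(X)=c_2$. Hence $c_1=c_2$, which is precisely $I_f(\mu)=w_f(A)$, so $\mu$ is again the minimizer. Once minimality is secured, $c_1=c_2=c_{A,f}$ follows from the two expressions for $c_{A,f}$ in (\ref{cc}); and since $U_f^\mu\leqslant c_2$ $\mu$-a.e.\ while $\int U_f^\mu\,d\mu=c_1=c_2$, equality must prevail in (\ref{2}) $\mu$-a.e. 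The equivalence of (\ref{1}) and (\ref{2}) is then read off, each having been shown equivalent to minimality, with Theorem~\ref{th-ch1} supplying the passage from minimality back to both conditions.

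The main obstacle I anticipate is the rigorous passage from the inequality (\ref{1}), valid only \emph{nearly everywhere}, to the integral inequality against an arbitrary competitor $\nu$. This rests on showing that the inner-capacity-zero exceptional set $E$ is genuinely $\nu$-negligible, for which one must verify that it is $\nu$-measurable (guaranteed by universal measurability of $U_f^\mu=U^\mu+f$, since $U^\mu$ is l.s.c.\ and $f$ is universally measurable) and that it is $\nu$-$\sigma$-finite (guaranteed by $\nu(X)=1$), so that Lemma~\ref{l-negl} applies. The remaining care is purely in checking that the cross term $\langle\mu,\nu\rangle$ and the integrals $\int f\,d\nu$, $\int U^\mu\,d\nu$ are finite, which the pre-Hilbert structure and the definition of $\breve{\mathcal E}^+_f(A)$ provide.
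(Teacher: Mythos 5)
Your proof is correct. The paper itself does not reproduce a proof of Theorem~\ref{th-ch2} --- it quotes the result from \cite[Proposition~1]{Z5a} --- but your argument (necessity read off from Theorem~\ref{th-ch1} with the identification $c_1=c_2=c_{A,f}$ via (\ref{cc}); sufficiency of (\ref{1}) via the expansion $I_f(\nu)-I_f(\mu)=\|\nu-\mu\|^2+2\bigl(\int U_f^\mu\,d\nu-\int U_f^\mu\,d\mu\bigr)$ together with Lemma~\ref{l-negl} to integrate the n.e.\ inequality against an arbitrary competitor; sufficiency of (\ref{2}) by comparing $c_1-c_2=I_f(\mu)-w_f(A)\geqslant0$ with the integrated form $c_1\leqslant c_2$) is precisely the standard variational route such characterizations rest on, and it matches the toolkit the paper itself deploys elsewhere (convexity, the pre-Hilbert structure, Lemmas~\ref{str} and \ref{l-negl}). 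The measure-theoretic checks you flag --- $\nu$-measurability of the exceptional set, its $\nu$-$\sigma$-finiteness from $\nu(X)=1$, and finiteness of $\langle\mu,\nu\rangle$ and of the $f$-integrals --- are exactly the points that need care, and you have handled them properly.
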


\section{Proofs of the main results}\label{sec-proofs}

Throughout this section,
\begin{equation}\label{wff}
w_f(A)<\infty,
\end{equation}
and so the class $\breve{\mathcal E}^+_f(A)$ of admissible measures in Problem~\ref{pr-main} is nonempty.

Assume for a moment that $A=K\subset X$ is compact, and that $f\in\Phi(X)$. Then Problem~\ref{pr-main} is solvable for an arbitrary (not necessarily perfect) kernel $\kappa$ on $X$, which follows from the vague compactness of the class $\breve{\mathfrak M}^+(K)$ \cite[Section~III.1, Corollary~3]{B2} and the vague lower semicontinuity of the $f$-wei\-ghted energy $I_f(\mu)$ on $\mathfrak M^+$, the latter being obvious from Lemma~\ref{lemma-semi} applied to each of $\kappa\in\Phi(X\times X)$ and $f\in\Phi(X)$ (cf.\ also Remark~\ref{fIII'}). However, this fails to hold if either of the above two requirements is omitted, and moreover Problem~\ref{pr-main} then becomes in general unsolvable.

To analyze Problem~\ref{pr-main} for quite a general (not necessarily of the class $\Phi(X)$) external field $f$ and for quite a general (not necessarily closed) set $A\subset X$, {\it from now on we assume that the kernel\/ $\kappa$ is perfect, and that\/ $f$ is of form\/} (\ref{fform}), i.e.
\[f=\psi+U^\vartheta,\text{ \ where $\psi\in\Phi(X)$ and $\vartheta\in\mathcal E$}\]
(see assumptions $(H_1)$ and $(H_2)$ in Section~\ref{sec-intr}). This enables us to develop an approach based on the close interaction between the strong and the vague topologies on the (strongly complete) cone $\mathcal E^+$, which turned out to be efficient for proving Theorems~\ref{th2'}, \ref{th4'}, \ref{th3'}, \ref{th5'}, \ref{riesz} and Corollary~\ref{cortm} (see Sections~\ref{sec-pr}--\ref{sec-pr6}).

Our choice of $f$ is basically caused by the fact that, if a net $(\mu_s)_{s\in S}\subset\mathcal E^+$ converges to $\mu_0\in\mathcal E^+$ both strongly and vaguely, then (see the proof of Lemma~\ref{la2})
\[I_f(\mu_0)\leqslant\liminf_{s\in S}\,I_f(\mu_s),\]
which is one of the crucial points in the analysis performed below.

\subsection{Preparatory lemmas}\label{sec-lemmas}
A net $(\mu_s)_{s\in S}\subset\breve{\mathcal E}^+_f(A)$ is said to be {\it minimizing\/} (in Problem~\ref{pr-main}) if
\begin{equation}\label{min}
\lim_{s\in S}\,I_f(\mu_s)=w_f(A).
\end{equation}
The (nonempty) set of all those $(\mu_s)_{s\in S}$ is denoted by $\mathbb M_f(A)$.

\begin{lemma}\label{la1}
  There is the unique\/ $\xi_{A,f}\in\mathcal E^+$ such that, for all\/ $(\mu_s)_{s\in S}\in\mathbb M_f(A)$,
  \begin{equation}\label{conv}
    \mu_s\to\xi_{A,f}\text{ \ strongly and vaguely}.
  \end{equation}
This\/ $\xi_{A,f}$ is said to be the extremal measure\/ {\rm(}in Problem\/~{\rm\ref{pr-main}}{\rm)}.
\end{lemma}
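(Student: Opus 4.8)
The plan is to exploit the pre-Hilbert structure of $\mathcal E$ together with the convexity of the admissible class and the strong completeness of the cone $\mathcal E^+$, in direct analogy with the standard argument showing that a minimizing net for a strictly convex functional on a complete convex set is strongly Cauchy. The key algebraic identity will be the parallelogram law in $\mathcal E$. First I would fix two minimizing nets $(\mu_s)_{s\in S}$ and $(\nu_t)_{t\in T}\in\mathbb M_f(A)$ and, passing to the product directed set, form the combined net; the goal is to prove that any such net is strongly Cauchy and that all minimizing nets share the same strong (hence vague) limit.

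The main computation rests on the observation that for $\mu,\nu\in\breve{\mathcal E}^+_f(A)$ the midpoint $(\mu+\nu)/2$ again lies in $\breve{\mathcal E}^+_f(A)$, since this class is convex and the total-mass constraint $\mu(X)=1$ is affine. Writing out $I_f$ via the inner product, I would use the parallelogram identity
\begin{equation*}
\|\mu-\nu\|^2=2\|\mu\|^2+2\|\nu\|^2-4\Bigl\|\tfrac{\mu+\nu}{2}\Bigr\|^2,
\end{equation*}
and combine it with the linearity of the field term $\mu\mapsto 2\int f\,d\mu$. Because the field term is affine, it cancels appropriately and one gets
\begin{equation*}
\|\mu-\nu\|^2=2I_f(\mu)+2I_f(\nu)-4I_f\Bigl(\tfrac{\mu+\nu}{2}\Bigr)\leqslant 2I_f(\mu)+2I_f(\nu)-4w_f(A).
\end{equation*}
Here I would invoke $-\infty<w_f(A)<\infty$ (guaranteed under the running hypotheses, cf.\ (\ref{fin}) and footnote~\ref{Wfin}) so that $I_f\bigl((\mu+\nu)/2\bigr)\geqslant w_f(A)$ is meaningful and the right-hand side is a genuine real number. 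Taking $\mu=\mu_s$ and $\nu=\nu_t$ and letting both indices advance, the defining property (\ref{min}) forces the right-hand side to $0$, so the combined net is strongly Cauchy. By perfectness of $\kappa$ the cone $\mathcal E^+$ is strongly complete, hence this net converges strongly to a unique $\xi_{A,f}\in\mathcal E^+$; perfectness further gives that strong convergence implies vague convergence to the same limit, yielding (\ref{conv}).

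Uniqueness of $\xi_{A,f}$ follows from the same estimate applied to any two minimizing nets: their strong limits must coincide because the cross-differences $\|\mu_s-\nu_t\|$ tend to $0$, and both the strong and vague topologies on $\mathcal E^+$ are Hausdorff. The one point demanding care — and the main obstacle — is the handling of the affine field term: one must confirm that $f$ being $\mu$-integrable for each admissible $\mu$ makes $\int f\,d\bigl((\mu+\nu)/2\bigr)=\tfrac12\int f\,d\mu+\tfrac12\int f\,d\nu$ legitimate, so that $I_f$ is genuinely affine-plus-quadratic and the parallelogram cancellation is valid. Under hypothesis $(H_2)$, with $f=\psi+U^\vartheta$, integrability and finiteness of $\int f\,d\mu$ on $\breve{\mathcal E}^+_f(A)$ are already in place (Section~\ref{sec-ext}), so this reduces to a routine linearity check; the substantive content is entirely the parallelogram estimate combined with strong completeness.
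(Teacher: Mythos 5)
Your proposal is correct and follows essentially the same route as the paper's own proof: the parallelogram identity in $\mathcal E$ combined with the convexity of $\breve{\mathcal E}^+_f(A)$ yields that the cross-differences $\|\mu_s-\nu_t\|$ of any two minimizing nets tend to zero, after which strong completeness of $\mathcal E^+$ and the fact that the strong topology is finer than the vague one (both from perfectness) give the common strong and vague limit. Your explicit cancellation of the affine field term and the appeal to $-\infty<w_f(A)<\infty$ are exactly the points the paper relies on as well.
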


\begin{proof} We shall first show that for any $(\mu_s)_{s\in S}$ and $(\nu_t)_{t\in T}$
from $\mathbb M_f(A)$,
\begin{equation}\label{ST}
\lim_{(s,t)\in S\times T}\,\|\mu_s-\nu_t\|=0,
\end{equation}
$S\times T$ being the directed product of the directed sets $S$ and
$T$ (see e.g.\ \cite[p.~68]{K}). In fact, due to the convexity of the class $\breve{\mathcal E}^+_f(A)$, for any $(s,t)\in S\times T$ we have
\[4w_f(A)\leqslant4I_f\Bigl(\frac{\mu_s+\nu_t}{2}\Bigr)=\|\mu_s+\nu_t\|^2+4\int f\,d(\mu_s+\nu_t).\]
Applying the parallelogram identity in the pre-Hilbert space $\mathcal E$ therefore gives
\[0\leqslant\|\mu_s-\nu_t\|^2\leqslant-4w_f(A)+2I_f(\mu_s)+2I_f(\nu_t),\]
which yields (\ref{ST}) when combined with (\ref{min}).

Taking the two nets in (\ref{ST}) to be equal, we see that every $(\nu_t)_{t\in T}\in\mathbb M_f(A)$ is strong Cauchy. The cone $\mathcal E^+$ being strongly complete, $(\nu_t)_{t\in T}$ must converge strongly to some $\xi_{A,f}\in\mathcal E^+$. The same unique $\xi_{A,f}$ also serves as the strong limit of any other $(\mu_s)_{s\in S}\in\mathbb M_f(A)$, which is obvious from (\ref{ST}). The strong topology on $\mathcal E^+$ being finer than the vague topology on $\mathcal E^+$ by virtue of the perfectness of the kernel, $(\mu_s)_{s\in S}$ must converge to $\xi_{A,f}$ also vaguely.
\end{proof}

\begin{remark}\label{tmass}In general, the extremal measure $\xi_{A,f}$ is {\it not\/} concentrated on $A$. What is clear so far is that
\begin{equation}\label{Eq}
\xi_{A,f}\in\mathcal E'(A)\subset\mathcal E^+(\overline{A}),
\end{equation}
the former relation being clear from (\ref{conv}), and the latter from (\ref{belong}).\end{remark}

\begin{remark}\label{tmass'}Another consequence of (\ref{conv}) is that
\begin{equation}\label{11}
\xi_{A,f}(X)\leqslant1,
\end{equation}
the map $\mu\mapsto\mu(X)$ being vaguely l.s.c.\ on $\mathfrak M^+$ (Lemma~\ref{lemma-semi} with $g=1$). {\it Equality necessarily prevails in\/ {\rm(\ref{11})} if\/ $A=K$ is compact}, cf.\ \cite[Section~III.1, Corollary~3]{B2}.
\end{remark}

If the unweighted case $f=0$ takes place, we shall drop the index $f$, and shall simply write $\lambda_A$, $\mathbb M(A)$, and $\xi_A$ in place of $\lambda_{A,f}$, $\mathbb M_f(A)$, and $\xi_{A,f}$, respectively.

\begin{lemma}\label{la2} For the extremal measure\/ $\xi:=\xi_{A,f}$, we have
 \begin{equation}\label{IorII}
 -\infty<I_f(\xi_{A,f})\leqslant w_f(A)<\infty.
 \end{equation}
\end{lemma}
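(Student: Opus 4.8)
The plan is to treat the two ends of the chain separately, the left-hand bound being essentially free and the upper bound carrying all the content. By Lemma~\ref{la1} the extremal measure $\xi:=\xi_{A,f}$ lies in $\mathcal E^+$, and since $f$ is of form (\ref{fform}), the observation recorded in footnote~\ref{Wfin} yields $I_f(\mu)>-\infty$ for every $\mu\in\mathcal E^+$; in particular $I_f(\xi)>-\infty$. Concretely, the negative part of $f$ contributes $-2\int U^{\vartheta^-}\,d\xi$, which is finite for $\xi\in\mathcal E^+$ (footnote~\ref{FFF}), while $I(\xi)\geqslant0$ and $\int\psi\,d\xi$ is bounded below. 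The rightmost inequality $w_f(A)<\infty$ is the standing assumption (\ref{wff}).

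The substance is therefore the middle inequality $I_f(\xi)\leqslant w_f(A)$, and the engine is the lower semicontinuity of the Gauss functional under \emph{combined} strong and vague convergence. I would fix any minimizing net $(\mu_s)_{s\in S}\in\mathbb M_f(A)$, which by Lemma~\ref{la1} converges both strongly and vaguely to $\xi$, and prove the general statement
\[I_f(\xi)\leqslant\liminf_{s\in S}\,I_f(\mu_s).\]
The lemma then follows at once, since the right-hand side equals $\lim_{s}I_f(\mu_s)=w_f(A)$ by (\ref{min}).

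To establish this semicontinuity, I would decompose $I_f$ along the two constituents of $f=\psi+U^\vartheta$. Using identity (\ref{Isigma}) one writes
\[I_f(\mu)=\|\mu+\vartheta\|^2-\|\vartheta\|^2+2\int\psi\,d\mu,\]
which absorbs the $\vartheta$-contribution into a single energy term. Now $\mu_s+\vartheta\to\xi+\vartheta$ strongly, so $\|\mu_s+\vartheta\|^2\to\|\xi+\vartheta\|^2$ (the energy norm is strongly continuous, by the reverse triangle inequality), the term $-\|\vartheta\|^2$ is constant, and $\mu\mapsto\int\psi\,d\mu$ is vaguely l.s.c.\ on $\mathfrak M^+$ by Lemma~\ref{lemma-semi} (valid for the l.s.c.\ field $\psi\in\Phi(X)$, the not-necessarily-nonnegative case being covered by Remark~\ref{fIII'}), whence $\int\psi\,d\xi\leqslant\liminf_s\int\psi\,d\mu_s$. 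Since the first summand converges and the second is constant, the $\liminf$ of the sum equals the limit of those two plus the $\liminf$ of the $\psi$-term, giving $I_f(\xi)\leqslant\liminf_s I_f(\mu_s)$ as required. I would also note that the splitting $\int f\,d\mu=\int\psi\,d\mu+\langle\mu,\vartheta\rangle$ is legitimate because $\int U^{\vartheta^\pm}\,d\mu$ are both finite for $\mu\in\mathcal E^+$ (footnote~\ref{FFF}), so no $\infty-\infty$ ambiguity arises, and $\int\psi\,d\xi$ is well defined in $(-\infty,+\infty]$.

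The one point demanding care---and the place where both permanent hypotheses $(H_1)$ and $(H_2)$ enter---is the interplay of the two topologies: the quadratic/$\vartheta$ part behaves \emph{continuously} only under strong convergence, whereas the $\psi$-part is merely \emph{vaguely} lower semicontinuous. It is precisely because the perfectness of $\kappa$ (through Lemma~\ref{la1}) supplies simultaneous strong and vague convergence of every minimizing net that these two mechanisms can be combined; this is the crucial structural fact flagged in the preamble to this section, and I expect it to be the only genuine obstacle, the remaining steps being routine applications of the cited results.
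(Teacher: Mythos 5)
Your proposal is correct and is essentially the paper's own argument: both rest on the lower bound from footnote~\ref{Wfin}, the upper bound $w_f(A)<\infty$ from (\ref{wff}), the simultaneous strong and vague convergence of a minimizing net supplied by Lemma~\ref{la1}, vague lower semicontinuity of $\mu\mapsto\int\psi\,d\mu$ (Lemma~\ref{lemma-semi} with Remark~\ref{fIII'}), and strong convergence to control the $\vartheta$-dependent terms. The only difference is cosmetic: you absorb $\|\mu_s\|^2+2\langle\vartheta,\mu_s\rangle$ into the single term $\|\mu_s+\vartheta\|^2$ via (\ref{Isigma}) and invoke strong continuity of the norm, whereas the paper keeps the two terms separate and proves $\langle\vartheta,\mu_s\rangle\to\langle\vartheta,\xi\rangle$ by Cauchy--Schwarz --- the same estimate in different packaging.
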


\begin{proof} The first inequality in (\ref{IorII}) is fulfilled by footnote~\ref{Wfin}, and the last by (permanent) assumption (\ref{wff}).
In view of (\ref{min}), it is therefore enough to show that
\begin{equation}\label{IorII'}
I_f(\xi)\leqslant\lim_{s\in S}\,I_f(\mu_s),
\end{equation}
$(\mu_s)_{s\in S}\in\mathbb M_f(A)$ being fixed.

Assume first that $f\in\Phi(X)$. Since $\mu_s\to\xi$ both strongly and vaguely, Lemma~\ref{lemma-semi} applied to $f$ gives (\ref{IorII'}), for (cf.\  Remark~\ref{fIII'})
\[I_f(\xi)=\|\xi\|^2+2\int f\,d\xi\leqslant\liminf_{s\in S}\,\bigl(\|\mu_s\|^2+2\int f\,d\mu_s\bigr)=\lim_{s\in S}\,I_f(\mu_s).\]

Otherwise, $f=\psi+U^\vartheta$, where $\psi\in\Phi(X)$ and $\vartheta\in\mathcal E$. Similarly as in the preceding paragraph, (\ref{IorII'}) will follow once we verify the inequality
\[\int\bigl(\psi+U^\vartheta\bigr)\,d\xi\leqslant\liminf_{s\in S}\,\int\bigl(\psi+U^\vartheta\bigr)\,d\mu_s,\]
which, again by Lemma~\ref{lemma-semi} applied to $\psi$, is reduced to the equality
\begin{equation}\label{bun}\langle\vartheta,\xi\rangle=\lim_{s\in S}\,\langle\vartheta,\mu_s\rangle.\end{equation}
Applying the Cauchy--Schwarz (Bunyakovski) inequality to the (signed) measures $\vartheta$ and $\xi-\mu_s$, $s\in S$, elements of the pre-Hil\-bert space $\mathcal E$, we get
\[\left|\langle\vartheta,\xi-\mu_s\rangle\right|\leqslant\|\vartheta\|\cdot\|\xi-\mu_s\|,\]
which by the strong convergence of $(\mu_s)_{s\in S}$ to $\xi$ proves (\ref{bun}), whence the lemma.\end{proof}

\begin{lemma}\label{l-solv}The following two assertions are equivalent:
\begin{itemize}
 \item[{\rm(i)}] There exists the solution\/ $\lambda=\lambda_{A,f}$ to Problem\/~{\rm\ref{pr-main}}.
 \item[{\rm(ii)}] The extremal measure\/ $\xi=\xi_{A,f}$ belongs to the class\/ $\breve{\mathcal E}^+(A)$.\footnote{Compare with (\ref{Eq}) and (\ref{11}).}
\end{itemize}
If either of\/ {\rm(i)} or\/ {\rm(ii)} is fulfilled, then actually\/
\begin{equation}\label{es}
 \xi_{A,f}=\lambda_{A,f}.
\end{equation}
\end{lemma}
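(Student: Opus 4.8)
The plan is to derive both implications directly from the two preceding lemmas, Lemmas~\ref{la1} and~\ref{la2}, which have already carried out the analytic work; the present statement is essentially their packaging. Throughout I would use the inclusion $\breve{\mathcal E}^+_f(A)\subset\breve{\mathcal E}^+(A)$, valid since $\mathcal E^+_f\subset\mathcal E^+\subset\mathcal E$, so that every admissible measure in Problem~\ref{pr-main} already lies in $\breve{\mathcal E}^+(A)$.

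First I would treat the implication (i)$\Rightarrow$(ii) together with the identity (\ref{es}). Assume the minimizer $\lambda=\lambda_{A,f}$ exists. Then $\lambda\in\breve{\mathcal E}^+_f(A)$ and $I_f(\lambda)=w_f(A)$, so the \emph{constant} net equal to $\lambda$ (indexed over any directed set) lies in $\breve{\mathcal E}^+_f(A)$ and has $I_f$-limit $w_f(A)$; that is, it belongs to $\mathbb M_f(A)$. By Lemma~\ref{la1} every minimizing net converges strongly and vaguely to $\xi_{A,f}$, whereas a constant net converges to its own value. Since the strong and vague topologies on $\mathcal E^+$ are Hausdorff, these limits coincide, giving $\xi_{A,f}=\lambda\in\breve{\mathcal E}^+(A)$. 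This establishes (ii) and (\ref{es}) simultaneously.

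For (ii)$\Rightarrow$(i) the plan is to show that, under (ii), the measure $\xi:=\xi_{A,f}$ is itself admissible and attains the infimum. Lemma~\ref{la2} provides $-\infty<I_f(\xi)\leqslant w_f(A)<\infty$; since $\xi\in\mathcal E$ forces $I(\xi)=\|\xi\|^2<\infty$, the quantity $2\int f\,d\xi=I_f(\xi)-I(\xi)$ is finite, so $f$ is $\xi$-integrable and hence $\xi\in\mathcal E^+_f$. Combined with the hypothesis $\xi\in\breve{\mathfrak M}^+(A)$, this places $\xi$ in the admissible class $\breve{\mathcal E}^+_f(A)$. By definition of the infimum, $I_f(\xi)\geqslant w_f(A)$, which together with the reverse inequality from Lemma~\ref{la2} forces $I_f(\xi)=w_f(A)$. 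Thus $\xi$ solves Problem~\ref{pr-main}, proving (i) and, by uniqueness of the minimizer, again (\ref{es}).

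The only point that requires a moment's care is the passage, in the direction (ii)$\Rightarrow$(i), from the hypothesis $\xi\in\breve{\mathcal E}^+(A)$ (finite energy, unit mass, concentration on $A$) to the full admissibility $\xi\in\breve{\mathcal E}^+_f(A)$, i.e.\ the $\xi$-integrability of the external field $f$. This is precisely what the finiteness of $I_f(\xi)$ from Lemma~\ref{la2} supplies, using the representation (\ref{fform}) of $f$. Beyond this, no genuine obstacle remains, since all the delicate interaction between the strong and vague topologies has already been absorbed into Lemma~\ref{la1}.
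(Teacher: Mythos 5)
Your proposal is correct and follows essentially the same route as the paper: the implication (i)$\Rightarrow$(ii) via the constant (trivial) minimizing net, Lemma~\ref{la1}, and the Hausdorff property of the strong topology, and (ii)$\Rightarrow$(i) via the two-sided bound (\ref{IorII}) from Lemma~\ref{la2} combined with the definition of $w_f(A)$. The only difference is that you spell out explicitly why (\ref{IorII}) yields $\xi\in\breve{\mathcal E}^+_f(A)$ (finiteness of $I_f(\xi)$ and of $\|\xi\|^2$ forcing $\xi$-integrability of $f$), a step the paper leaves implicit.
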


\begin{proof} Assume (i) holds true. The trivial sequence $(\lambda)$ being obviously minimizing:
\[(\lambda)\in\mathbb M_f(A),\]
it must converge strongly (and vaguely) to the extremal measure $\xi$, see Lemma~\ref{la1}. The strong topology on $\mathcal E$  being Hausdorff, this yields (\ref{es}), whence (ii).

Assuming now that (ii) holds, we note from (\ref{IorII}) that $\xi\in\breve{\mathcal E}^+_f(A)$, whence
\[I_f(\xi)\geqslant w_f(A).\] The opposite being valid again by (\ref{IorII}), this establishes (i) with $\lambda:=\xi$.
\end{proof}

\begin{lemma}\label{la3}
For each\/ $K\in\mathfrak C_A$ large enough, there exists the\/ {\rm(}unique\/{\rm)} solution\/ $\lambda_{K,f}$ to Problem\/~{\rm\ref{pr-main}} with\/ $A:=K$, and moreover
\begin{equation}\label{eq-conv}
 \lambda_{K,f}\to\xi_{A,f}\text{ \ strongly and vaguely as\/ $K\uparrow A$}.
\end{equation}
\end{lemma}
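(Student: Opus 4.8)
The plan is to settle existence for the individual compact sets first, and then to recognize the resulting family of minimizers as a minimizing net for the problem on $A$, so that the convergence comes essentially for free from Lemma~\ref{la1}. To begin, I would observe that by Lemma~\ref{l-aux-1} we have $w_f(K)\downarrow w_f(A)<\infty$; since $f$ is of form (\ref{fform}) the value $w_f(K)$ is also bounded below (by the argument in the proof of Lemma~\ref{l-aux-3}), so there is some $K_0\in\mathfrak C_A$ with $-\infty<w_f(K)<\infty$ for every $K\supseteq K_0$. For each such $K$ the machinery of Lemmas~\ref{la1}--\ref{l-solv} applies verbatim with $A:=K$, furnishing the extremal measure $\xi_{K,f}$. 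Because $K$ is compact, hence closed, relation (\ref{Eq}) gives $\xi_{K,f}\in\mathcal E^+(\overline K)=\mathcal E^+(K)$, while Remark~\ref{tmass'} forces equality in (\ref{11}), i.e.\ $\xi_{K,f}(X)=1$. Thus $\xi_{K,f}\in\breve{\mathcal E}^+(K)$, and Lemma~\ref{l-solv} yields both the existence of $\lambda_{K,f}$ and the identity $\lambda_{K,f}=\xi_{K,f}$, which disposes of the first assertion.

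The heart of the proof is the claim that the net $(\lambda_{K,f})$, indexed by the cofinal directed set $\{K\in\mathfrak C_A:\ w_f(K)<\infty\}$, belongs to $\mathbb M_f(A)$. I would verify this in two moves. First, each $\lambda_{K,f}$ is concentrated on $K\subset A$, has unit total mass, and renders $f$ integrable, whence $\lambda_{K,f}\in\breve{\mathcal E}^+_f(A)$ and therefore $I_f(\lambda_{K,f})\geqslant w_f(A)$. Second, $I_f(\lambda_{K,f})=w_f(K)$ by the defining property of $\lambda_{K,f}$, and Lemma~\ref{l-aux-1} gives $w_f(K)\downarrow w_f(A)$ as $K\uparrow A$. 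Combining the two shows $\lim_{K\uparrow A}I_f(\lambda_{K,f})=w_f(A)$, i.e.\ $(\lambda_{K,f})\in\mathbb M_f(A)$. Lemma~\ref{la1} then asserts that every minimizing net converges strongly and vaguely to the extremal measure $\xi_{A,f}$, which is exactly the assertion (\ref{eq-conv}).

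The only points requiring care are bookkeeping rather than analysis: that $\{K:\ w_f(K)<\infty\}$ is cofinal in $\mathfrak C_A$ (immediate from the monotonicity in Lemma~\ref{l-aux-1}, so it is a legitimate index set for a net), and that a measure concentrated on the compactum $K$ indeed lies in $\breve{\mathcal E}^+_f(A)$. The genuinely load-bearing ingredient is the monotone convergence of extremal values $w_f(K)\downarrow w_f(A)$ supplied by Lemma~\ref{l-aux-1}; once that is granted, the strong Cauchy structure already isolated in Lemma~\ref{la1} delivers both the strong and the vague convergence simultaneously, and I do not anticipate any delicate estimate beyond what those lemmas already provide.
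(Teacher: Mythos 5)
Your proposal is correct and follows essentially the same route as the paper's own proof: obtain $w_f(K)<\infty$ for large $K$ from Lemma~\ref{l-aux-1} and (\ref{wff}), apply Lemma~\ref{la1} with $A:=K$ to get $\xi_{K,f}$, use compactness of $K$ (via (\ref{Eq}) and Remark~\ref{tmass'}, as in Remarks~\ref{tmass}--\ref{tmass'}) together with Lemma~\ref{l-solv} to identify $\lambda_{K,f}=\xi_{K,f}$, then recognize $(\lambda_{K,f})$ as a minimizing net in $\mathbb M_f(A)$ and invoke Lemma~\ref{la1} once more. The bookkeeping points you flag (cofinality of the index set, membership in $\breve{\mathcal E}^+_f(A)$) are handled identically, if implicitly, in the paper.
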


\begin{proof}
  For each $K\in\mathfrak C_A$ large enough ($K\geqslant K_0$), we have $w_f(K)<\infty$, cf.\ (\ref{wff}) and (\ref{l-mon}). Therefore, by Lemma~\ref{la1} with $A:=K$, there is the unique $\xi_{K,f}\in\mathcal E^+$ such that every $(\mu_s)_{s\in S}\in\mathbb M_f(K)$ converges to $\xi_{K,f}$ strongly and vaguely. Noting that
  $\xi_{K,f}\in\breve{\mathcal E}^+(K)$ (see Remarks~\ref{tmass} and \ref{tmass'}), we now derive from Lemma~\ref{l-solv} that $\xi_{K,f}$ serves as the (unique) solution $\lambda_{K,f}$ to Problem~\ref{pr-main} with $A:=K$, which establishes the first part of the lemma. But $(\lambda_{K,f})_{K\geqslant K_0}\subset\breve{\mathcal E}^+_f(A)$, and moreover, by Lemma~\ref{l-aux-1},
  \[\lim_{K\uparrow A}\,I_f(\lambda_{K,f})=\lim_{K\uparrow A}\,w_f(K)=w_f(A),\]
  which shows that, actually,
  \[(\lambda_{K,f})_{K\geqslant K_0}\in\mathbb M_f(A).\] Applying Lemma~\ref{la1} once again, we obtain (\ref{eq-conv}), whence the whole lemma.
\end{proof}

\subsection{Proof of Theorem~\ref{th2'}}\label{sec-pr} Let the assumptions of the theorem be fulfilled. Due to $(H_3)$, $\mathcal E'(A)=\mathcal E^+(A)$, which substituted into (\ref{Eq}) gives
\begin{equation}\label{exm'}
\xi\in\mathcal E^+(A),
\end{equation}
$\xi=\xi_{A,f}$ being the extremal measure in Problem~\ref{pr-main} (cf.\ Lemma~\ref{la1}).

We aim to show that in the case ${\rm cap}_*(A)<\infty$, $\xi$ serves as the solution $\lambda_{A,f}$ to Problem~\ref{pr-main}. By Lemma~\ref{l-solv}, this will follow once we verify that $\xi\in\breve{\mathcal E}^+(A)$, which in view of (\ref{exm'}) is equivalent to the assertion
\begin{equation}\label{exm}\xi(X)=1.\end{equation}

Fix a minimizing net $(\mu_s)_{s\in S}\in\mathbb M_f(A)$. Taking a subnet (if necessary) and changing notation, we can certainly assume $(\mu_s)_{s\in S}$ to be strongly bounded:
\begin{equation}\label{str-b}
 \sup_{s\in S}\,\|\mu_s\|<\infty.
\end{equation}
Since $\mu_s\to\xi$ vaguely, cf.\ (\ref{conv}), applying \cite[Section~IV.4, Corollary~3]{B2} gives
\begin{equation}\label{upper}\int1_K\,d\xi\geqslant\limsup_{s\in S}\,\int1_K\,d\mu_s\text{ \ for every compact $K\subset X$},\end{equation}
the indicator function $1_K$ being bounded, of compact support, and upper semicontinuous on $X$. On the other hand,
\[\xi(X)=\lim_{K\uparrow X}\,\xi(K)=\lim_{K\uparrow X}\,\int1_K\,d\xi,\]
which together with  (\ref{11}) and (\ref{upper}) results in
\[1\geqslant\xi(X)\geqslant\limsup_{(s,K)\in S\times\mathfrak C_X}\,\int1_K\,d\mu_s=
1-\liminf_{(s,K)\in S\times\mathfrak C_X}\,\int1_{A\setminus K}\,d\mu_s,\]
the equality being implied by the fact that every $\mu_s$ is a positive measure of unit total mass concentrated on $A$. (Here $\mathfrak C_X$ is the upward directed set of all compact subsets $K$ of $X$.)
The proof of (\ref{exm}) is thus reduced to that of
\begin{equation}\label{0}
  \liminf_{(s,K)\in S\times\mathfrak C_X}\,\int1_{A\setminus K}\,d\mu_s=0.
\end{equation}

By Theorem~\ref{prop.1.2'} applied to $A\setminus K$, $K\in\mathfrak C_X$ being arbitrarily chosen, there exists the (unique) inner capacitary measure $\gamma_{A\setminus K}$, minimizing the energy $\|\mu\|^2$ over the (convex) set $\Gamma_{A\setminus K}\subset\mathcal E^+$. For any $K'\in\mathfrak C_X$ such that $K\subset K'$, we have $\Gamma_{A\setminus K}\subset\Gamma_{A\setminus K'}$, and applying \cite[Lemma~4.1.1]{F1} with $\mathcal H:=\mathcal E$ and $\Gamma:=\Gamma_{A\setminus K'}$ gives
\begin{equation}\label{str-ca}\|\gamma_{A\setminus K}-\gamma_{A\setminus K'}\|^2\leqslant\|\gamma_{A\setminus K}\|^2-\|\gamma_{A\setminus K'}\|^2.\end{equation}
Since $\|\gamma_{A\setminus K}\|^2={\rm cap}_*(A\setminus K)$ by (\ref{pr1}), $\|\gamma_{A\setminus K}\|^2$ decreases as $K$ ranges through $\mathfrak C_X$, which together with (\ref{str-ca}) implies that the net $(\gamma_{A\setminus K})_{K\in\mathfrak C_X}\subset\mathcal E^+$ is Cauchy in the strong topology on $\mathcal E^+$. Noting that $(\gamma_{A\setminus K})_{K\in\mathfrak C_X}$ converges vaguely to zero,\footnote{Indeed, for any given $\varphi\in C_0(X)$, there exists a relatively compact open set $G\subset X$ such that $\varphi(x)=0$ for all $x\not\in\overline{G}$. Hence, $\gamma_{A\setminus K}(\varphi)=0$ for all $K\in\mathfrak C_X$ with $K\supset\overline{G}$, and the claim follows.} we get
\begin{equation}\label{str-conv}
  \gamma_{A\setminus K}\to0\text{ \ strongly in $\mathcal E^+$ as $K\uparrow X$,}
\end{equation}
the kernel $\kappa$ being perfect.

But, by (\ref{pr0}) applied to $A\setminus K$,
\begin{equation}\label{ii}U^{\gamma_{A\setminus K}}\geqslant1_{A\setminus K}\text{ \ n.e.\ on $A\setminus K$},\end{equation}
hence $\mu_s$-a.e.\ for all $s\in S$, the latter being derived from Lemma~\ref{l-negl} due to the fact that $A\setminus K$ along with $A$ is $\mu_s$-mea\-sur\-ab\-le, whereas $\mu_s\in\mathcal E^+$ is bounded. Integrating (\ref{ii}) with respect to $\mu_s$ we therefore obtain, by the Cauchy--Schwarz inequality,
\[\int1_{A\setminus K}\,d\mu_s\leqslant\int U^{\gamma_{A\setminus K}}\,d\mu_s\leqslant\|\gamma_{A\setminus K}\|\cdot\|\mu_s\|\text{ \ for all $K\in\mathfrak C_X$ and $s\in S$},\]
which combined with (\ref{str-b}) and (\ref{str-conv}) results in (\ref{0}).

Thus, under the assumptions made, the solution $\lambda_{A,f}$ to Problem~\ref{pr-main} does indeed exist. The remaining part of the theorem, uniquely characterizing $\lambda_{A,f}$ within $\breve{\mathcal E}^+_f(A)$, follows immediately by applying Theorem~\ref{th-ch2}.

\subsection{Proof of Theorem~\ref{th4'}}\label{sec-pr4} Under (permanent) hypotheses $(H_1)$ and $(H_2)$, assume that the solution $\lambda_{A,f}$ to Problem~\ref{pr-main} exists; then by Lemma~\ref{l-solv}, it must coincide with the extremal measure $\xi_{A,f}$, determined by Lemma~\ref{la1}. On the other hand, according to Lemma~\ref{la3}, for every $K\in\mathfrak C_A$ large enough ($K\geqslant K_0$), there is the solution $\lambda_{K,f}$ to Problem~\ref{pr-main} with $A:=K$; and moreover the net $(\lambda_{K,f})_{K\geqslant K_0}$ converges strongly and vaguely to $\xi_{A,f}$, see (\ref{eq-conv}). Substituting $\lambda_{A,f}=\xi_{A,f}$ into (\ref{eq-conv}) we get (\ref{conv2}).

Assume now that
$f=U^\vartheta$, where $\vartheta\in\mathcal E$. By Theorem~\ref{th-ch1}, the inner $f$-wei\-ghted equilibrium constant $c_{A,f}$ can be written in the form
\[c_{A,f}=\int U_f^{\lambda_{A,f}}\,d\lambda_{A,f}=\|\lambda_{A,f}\|^2+\int U^\vartheta\,d\lambda_{A,f}=\|\lambda_{A,f}\|^2+\langle\vartheta,\lambda_{A,f}\rangle,\]
and likewise
\begin{equation*}\label{limits}
c_{K,f}=\|\lambda_{K,f}\|^2+\langle\vartheta,\lambda_{K,f}\rangle\quad(K\geqslant K_0).
\end{equation*}
Since $\lambda_{K,f}\to\lambda_{A,f}$ strongly in $\mathcal E^+$ as $K\uparrow A$, see (\ref{conv2}), applying the Cauchy--Schwarz inequality to $\vartheta$ and $\lambda_{K,f}-\lambda_{A,f}$ gives
$\langle\vartheta,\lambda_{K,f}\rangle\to\langle\vartheta,\lambda_{A,f}\rangle$ (as $K\uparrow A$),
whence (\ref{conv3}).

\subsection{Proof of Theorem~\ref{th3'}}\label{sec-pr3} Let the assumptions of the theorem be fulfilled, and let $\zeta$ be the measure appearing in $(H_2')$. The proof is divided into four steps.\smallskip

{\it Step 1.} Assume first that ${\rm cap_*}(A)<\infty$.
Since $\mathcal E^+(A)$ is strongly closed according to $(H_3)$, $\zeta^A$ and $\gamma_A$, the inner balayage of $\zeta$ to $A$ and the inner capacitary measure of $A$, respectively, are both concentrated on $A$, i.e.
\begin{equation}\label{zk}\zeta^A,\,\gamma_A\in\mathcal E^+(A)\end{equation}
(see Corollary~\ref{cor-bal} and Theorem~\ref{prop.1.2''}). We aim to show that
\begin{equation}\label{lk}
\omega:=\zeta^A+\eta_{A,f}\gamma_A,
\end{equation}
the constant $\eta_{A,f}$ being introduced by the equality in (\ref{eqalt}),
serves as the (unique) solution $\lambda_{A,f}$ to Problem~\ref{pr-main}. This will provide an alternative proof of the solvability of Problem~\ref{pr-main} (compare with Theorem~\ref{th2'} and its proof, given in Section~\ref{sec-pr}).

Indeed, applying (\ref{eq-mass}) gives
\begin{equation}\label{balbound}0\leqslant\zeta^A(X)\leqslant\zeta(X)\leqslant1,\end{equation}
the last inequality being valid by virtue of (\ref{fform'}). Thus $\eta_{A,f}\in[0,\infty)$, and combining (\ref{eqalt}), (\ref{pr1}), (\ref{zk}), and (\ref{lk}) shows that, actually, $\omega\in\breve{\mathcal E}^+(A)$. Hence, by (\ref{Isigma'}),
\[\omega\in\breve{\mathcal E}^+_f(A).\]
According to Theorem~\ref{th-ch2}, $\omega=\lambda_{A,f}$ will therefore follow once we verify the inequality
\begin{equation}\label{qk1}U_f^\omega\geqslant\int U_f^\omega\,d\omega\text{ \ n.e.\ on $A$}.\end{equation}

By use of Lemma~\ref{str}, we derive from (\ref{frr}), (\ref{eq-pr-10}), and (\ref{lk}) that the relation
\begin{equation}\label{qk2}U_f^\omega=U^\omega-U^\zeta=U^{\zeta^A-\zeta}+\eta_{A,f}U^{\gamma_A}=\eta_{A,f}\end{equation}
holds true n.e.\ on $A$, hence $\omega$-a.e.\ (see Lemma~\ref{l-negl}), and consequently
\[\int U_f^\omega\,d\omega=\eta_{A,f}\omega(X)=\eta_{A,f}.\] Substituted into (\ref{qk2}), this gives (\ref{qk1}). Thus the solution $\lambda_{A,f}$ to Problem~\ref{pr-main} does exist, and moreover $\lambda_{A,f}=\omega$ and $\eta_{A,f}=c_{A,f}$,
$c_{A,f}$ being the inner $f$-weighted equilibrium constant. This establishes (\ref{const-alt}) as well as the representation
\begin{equation}\label{pres}
\lambda_{A,f}=\zeta^A+\eta_{A,f}\gamma_A.
\end{equation}

As $(\gamma_A)^A=\gamma_A$ (Corollary~\ref{eqba}), identity (\ref{pres}) can be rewritten in the form
\begin{equation}\label{bal}
\lambda_{A,f}=(\zeta+\eta_{A,f}\gamma_A)^A,
\end{equation}
the inner balayage being additive on positive measures of finite energy.
By virtue of Theorem~\ref{th-intr} applied to $\mu:=\zeta+\eta_{A,f}\gamma_A\in\mathcal E^+$, $\lambda_{A,f}$ can therefore be characterized as the unique measure of minimum energy norm, resp.\ of minimum potential, within the class of all $\nu\in\mathcal E^+$ having the property
\[U^\nu\geqslant U^\zeta+\eta_{A,f}U^{\gamma_A}\text{ \ n.e.\ on $A$}.\]
Noting from (\ref{frr}) with the aid of Lemma~\ref{str} that this inequality is equivalent to
\[U^\nu_f\geqslant\eta_{A,f}\text{ \ n.e.\ on $A$},\]
we arrive at assertion (i), resp.\ (ii), of the theorem.

Finally, by making use of Corollary~\ref{cor-bal} we derive from (\ref{bal}) that $\lambda_{A,f}$ is the only measure in $\mathcal E^+(A)$ having the property
\[U^{\lambda_{A,f}}=U^{\zeta}+\eta_{A,f}U^{\gamma_A}\text{ \ n.e.\ on $A$},\]
or equivalently (cf.\ Lemma~\ref{str})
\[U^{\lambda_{A,f}}_f=\eta_{A,f}\text{ \ n.e.\ on $A$}.\]
This establishes assertion (iii).\smallskip

{\it Step 2.} Let now ${\rm cap}_*(A)=\infty$.
By virtue of assumption (\ref{bal1}), then necessarily $\zeta^A(X)=1$. The class $\mathcal E^+(A)$ being strongly closed according to $(H_3)$, we actually have $\zeta^A\in\breve{\mathcal E}^+(A)$ (Corollary~\ref{cor-bal}), whence (Corollary~\ref{l-aux-4})
\begin{equation}\label{zetaA}
 \zeta^A\in\breve{\mathcal E}^+_f(A).
\end{equation}
Our aim is to show that $\zeta^A$ serves as the (unique) solution to Problem~\ref{pr-main}, i.e.
\begin{equation}\label{lb}
\zeta^A=\lambda_{A,f}.
\end{equation}

By (\ref{eq-pr-11}) with $\mu:=\zeta$, $U_f^{\zeta^A}=U^{\zeta^A}-U^\zeta=0$ $\zeta^A$-a.e.,
whence
\[\int U_f^{\zeta^A}\,d\zeta^A=0,\]
which combined with (\ref{eq-pr-10}) applied to $\mu:=\zeta$ gives
\[U_f^{\zeta^A}=\int U_f^{\zeta^A}\,d\zeta^A\text{ \ n.e.\ on $A$}.\]
By Theorem~\ref{th-ch2}, this together with (\ref{zetaA}) implies (\ref{lb}) as well as $c_{A,f}=0$. Furthermore, noting from (\ref{eqalt}) that $\eta_{A,f}$ also equals $0$, we obtain (\ref{const-alt}).

We finally observe that, due to the equalities $\zeta^A=\lambda_{A,f}$ and $\eta_{A,f}=0$ thereby established, assertions (i)--(iii), providing alternative characterizations of $\lambda_{A,f}$, can be derived directly from Theorem~\ref{th-intr} with $\mu:=\zeta$.\smallskip

{\it Step 3.} The aim of this step is to verify that, under hypotheses $(H_1)$, $(H_3)$, $(H_4)$, and $(H_2')$, assumption (\ref{bal1}) is not only sufficient, but also necessary for the existence of the solution $\lambda_{A,f}$. Assume to the contrary that $\lambda_{A,f}$ exists, but (\ref{bal1}) does not hold; in view of (\ref{fform'}) and (\ref{eq-mass}), then necessarily
\begin{equation}\label{ci'}
 {\rm cap}_*(A)=\infty\text{ \ and \ }\zeta^A(X)<1.
\end{equation}

According to Lemma~\ref{la3}, for each $K\in\mathfrak C_A$ large enough ($K\geqslant K_0$), there is the solution $\lambda_{K,f}$ to Problem~\ref{pr-main} with $A:=K$; and moreover the net $(\lambda_{K,f})_{K\geqslant K_0}$ converges strongly and vaguely to the extremal measure $\xi_{A,f}$, determined by Lemma~\ref{la1}. We assert that, due to the former relation in (\ref{ci'}),
\begin{equation}\label{eqq}
 \xi_{A,f}=\zeta^A.
\end{equation}

Note that the capacity of any compact set is finite, the kernel $\kappa$ being strongly positive definite by assumption. Therefore, by (\ref{eqalt}) and (\ref{pres}) applied to $K$,
\begin{equation}\label{fol}
\lambda_{K,f}=\zeta^K+\widetilde{\eta}_{K,f}\lambda_K\text{ \ for all $K\geqslant K_0$},
\end{equation}
where $\lambda_K:=\gamma_K/{\rm cap}(K)$ is the (unique) solution to problem (\ref{W}) with $A:=K$, and
\[\widetilde{\eta}_{K,f}:=1-\zeta^K(X).\]
But the net $(\widetilde{\eta}_{K,f})_{K\geqslant K_0}\subset\mathbb R$ is bounded since, by (\ref{balbound}) with $A:=K$,
\[0\leqslant\zeta^K(X)\leqslant\zeta(X)\leqslant1\text{ \ for all $K\geqslant K_0$}.\]
Furthermore, by Theorem~\ref{th-bal-cont},
\[\zeta^K\to\zeta^A\text{ \ strongly (and vaguely) in $\mathcal E^+$ as $K\uparrow A$}.\]
Thus, if we show that
\begin{equation}\label{lconv}
\lambda_K\to0\text{ \ strongly in $\mathcal E^+$ as $K\uparrow A$},
\end{equation}
identity (\ref{eqq}) will follow from (\ref{fol}) by passing to the limit as $K\uparrow A$, and making use of the triangle inequality in the pre-Hil\-bert space $\mathcal E$.

It is seen from (\ref{153}) that the net $(\lambda_K)_{K\geqslant K_0}$ is minimizing in Problem~\ref{pr-main} with $f=0$, i.e.\ $(\lambda_K)_{K\geqslant K_0}\in\mathbb M(A)$. Applying Lemmas~\ref{la1} and \ref{la3}, we therefore conclude that there exists the unique extremal measure $\xi_A$ in Problem~\ref{pr-main} with $f=0$, and moreover $\lambda_K\to\xi_A$ strongly in $\mathcal E^+$ as $K\uparrow A$. This yields
\[\|\xi_A\|^2=\lim_{K\uparrow A}\,\|\lambda_K\|^2=\lim_{K\uparrow A}\,w(K)=0,\]
the last equality being valid due to the assumption ${\rm cap}_*(A)=\infty$. By virtue of the energy principle, $\xi_A=0$, which proves (\ref{lconv}), whence (\ref{eqq}).

Since $\lambda_{A,f}$ exists by assumption, applying Lemma~\ref{l-solv} therefore gives
\[\lambda_{A,f}=\xi_{A,f}=\zeta^A,\]
which however is impossible, for $\zeta^A(X)<1$ by (\ref{ci'}). Contradiction.\smallskip

{\it Step 4.} To complete the proof of the theorem, it remains to establish (\ref{conv3'}). Applying (\ref{eqalt}) and (\ref{const-alt}) to each $K\in\mathfrak C_A$ large enough ($K\geqslant K_0$), we get
\[c_{K,f}=\frac{1-\zeta^K(X)}{{\rm cap}(K)}.\]
In view of (\ref{conv3}), (\ref{conv3'}) will therefore follow once we show that the net $(c_{K,f})_{K\geqslant K_0}$ decreases, or equivalently, that the net $\bigl(\zeta^K(X)\bigr)_{K\in\mathfrak C_A}$ increases, cf.\ (\ref{153}). But for any $K,K'\in\mathfrak C_A$ such that $K'\geqslant K$, we have $\zeta^K=(\zeta^{K'})^K$ by Proposition~\ref{cor-rest}, hence
\[\zeta^K(X)\leqslant\zeta^{K'}(X)\]
by Proposition~\ref{cor-mass}, whence the claim.

\subsection{Proof of Corollary~\ref{cortm}}\label{sec-exc} As pointed out in Theorem~\ref{th3'}, $\lambda_{A,f}\in\Lambda_{A,f}$, the class $\Lambda_{A,f}$ being introduced by (\ref{gamma}). We thus only need to show that in the case of $\sigma$-co\-m\-p\-act $X$,
 \begin{equation}\label{ml}
 \lambda_{A,f}(X)\leqslant\mu(X),
 \end{equation}
$\mu\in\Lambda_{A,f}$ being arbitrarily given. But according to Theorem~\ref{th3'}(ii), then necessarily
\[U^{\lambda_{A,f}}\leqslant U^\mu\text{ \ everywhere on $X$},\]
and (\ref{ml}) follows by making use of
the principle of positivity of mass (Theorem~\ref{pr-pos}).

\subsection{Proof of Corollary~\ref{th5'}}\label{sec-pr5} Let hypotheses $(H_1)$, $(H_3)$, $(H_4)$, and $(H_2')$ be fulfilled, and let ${\rm cap}_*(A)=\infty$. To verify the first part of the corollary, assume moreover that $\zeta(X)<1$. In view of (\ref{eq-mass}), then
\[\zeta^A(X)\leqslant\zeta(X)<1,\] which implies, by use of Theorem~\ref{th3'}, that Problem~\ref{pr-main} indeed has no solution.

It remains to consider the case of $\zeta$ concentrated on $A$. Then the orthogonal projection of $\zeta$ onto the (strongly closed by $(H_3)$, hence strongly complete) cone $\mathcal E^+(A)$ is certainly the same $\zeta$, which implies, by virtue of Corollary~\ref{cor-bal}, that
\[\zeta^A=\zeta.\] Applying Theorem~\ref{th3'} once again, we therefore conclude that the solution $\lambda_{A,f}$ to Problem~\ref{pr-main} exists if and only if $\zeta(X)=\zeta^A(X)=1$, and in the affirmative case
\[\lambda_{A,f}=\zeta^A=\zeta,\] cf.\ the latter formula in representation (\ref{RR}). This completes the whole proof.

\subsection{Proof of Theorem~\ref{riesz}}\label{sec-pr6} Consider the $\alpha$-Riesz kernel of order $\alpha\in(0,n)$ on $\mathbb R^n$, $n\geqslant2$, a set $A\subset\mathbb R^n$ with strongly closed $\mathcal E^+(A)$, and an external field $f$ of form (\ref{fform}) such that $w_f(A)<\infty$, or equivalently with ${\rm cap}_*\bigl(\{x\in A:\ \psi(x)<\infty\}\bigr)>0$. If moreover ${\rm cap}_*(A)<\infty$, then Problem~\ref{pr-main} is solvable according to Theorem~\ref{th2'}, the $\alpha$-Riesz kernel of arbitrary order $\alpha$ being perfect. This establishes assertion (a).

If now ${\rm cap}_*(A)=\infty$, assume that $\alpha\leqslant2$; and let $f$ be of form (\ref{fform'}) --- then $w_f(A)<\infty$ is equivalent to ${\rm cap}_*(A)>0$. Furthermore, then the first and second maximum principles hold true, hence Theorem~\ref{th3'} and Corollary~\ref{th5'} can be utilized.
By a direct application of Theorem~\ref{th3'}, we thus see that $\lambda_{A,f}$ exists if and only if $\zeta^A(\mathbb R^n)=1$, and in the affirmative case $\lambda_{A,f}=\zeta^A$, cf.\ (\ref{RR}). This validates (b).

Assertion (c) is obtained as a direct application of Corollary~\ref{th5'}, or alternatively, it can be derived from (b) with the aid of arguments similar to those in Section~\ref{sec-pr5}.

The "only if" part of (d) follows directly from (c). For the "if" part, let $A$ be not inner $\alpha$-thin at infinity, and let $\zeta(\mathbb R^n)=1$. Applying \cite[Corollary~5.3]{Z-bal2} gives
\[\zeta^A(\mathbb R^n)=\zeta(\mathbb R^n)=1,\]
which implies, by making use of (b), that $\lambda_{A,f}$ does exist, and moreover $\lambda_{A,f}=\zeta^A$.

To verify (e), assume that $\overline{A}$ is $\alpha$-thin at infinity. In the Newtonian case $\alpha=2$, also assume for the sake of simplicity that $D:=(\overline{A})^c$ is connected. We aim to show that in the case
$\zeta(D)>0$, $\lambda_{A,f}$ does not exist. As seen from (b), this will follow once we prove that $\zeta^A(\mathbb R^n)<1$, which in turn is reduced to proving
\begin{equation}\label{ineq}
(\zeta|_D)^A(\mathbb R^n)<\zeta|_D(\mathbb R^n).
\end{equation}

Due to the $\alpha$-thinness of $\overline{A}$ at infinity, there exists the $\alpha$-Riesz equilibrium measure $\gamma$ of $\overline{A}$, treated in an extended sense where $I(\gamma)$ as well as $\gamma(\mathbb R^n)$ may be infinite (for details, see \cite[Section~V.1.1]{L}, cf.\ \cite[Section~5]{Z-bal} and \cite[Sections~1.3, 2.1]{Z-bal2}). Applying \cite[Theorem~8.7]{Z-bal}, we therefore get
\[(\zeta|_D)^{\overline{A}}(\mathbb R^n)<\zeta|_D(\mathbb R^n),\]
whence (\ref{ineq}), for, in consequence of Propositions~\ref{cor-rest} and \ref{cor-mass},
\[(\zeta|_D)^A(\mathbb R^n)=\bigl((\zeta|_D)^{\overline{A}}\bigr)^A(\mathbb R^n)\leqslant(\zeta|_D)^{\overline{A}}(\mathbb R^n).\]

\section{Acknowledgements} The author is deeply indebted to Bent Fuglede and Douglas P.\ Hardin
for reading and commenting on the manuscript. The present research was partially supported by the NAS of Ukraine under grant 0122U000670.

\end{document}